\flushbottom \setlength{\parindent}{16pt}
\title{\bf Maximum scattered linear sets of pseudoregulus type and the Segre Variety ${\cal S}_{n,n}$}
\author{G. Lunardon, G. Marino, O. Polverino, R. Trombetti}
\date{}
\begin{document}
\maketitle

\newtheorem{theorem}{Theorem}[section]
\newtheorem{lemma}[theorem]{Lemma}
\newtheorem{conj}[theorem]{Conjecture}
\newtheorem{remark}[theorem]{Remark}
\newtheorem{cor}[theorem]{Corollary}
\newtheorem{prop}[theorem]{Proposition}
\newtheorem{defin}[theorem]{Definition}
\newtheorem{result}[theorem]{Result}
\newtheorem{property}[theorem]{Property}
\newtheorem{example}[theorem]{Example}

\makeatother
\newcommand{\Prf}{\noindent{\bf Proof}.\quad }
\renewcommand{\labelenumi}{(\alph{enumi})}


\def\B{\mathbf B}
\def\C{\mathbf C}
\def\Z{\mathbf Z}
\def\Q{\mathbf Q}
\def\W{\mathbf W}
\def\a{\mathbf a}
\def\b{\mathbf b}
\def\c{\mathbf c}
\def\d{\mathbf d}
\def\e{\mathbf e}
\def\l{\mathbf l}
\def\v{\mathbf v}
\def\w{\mathbf w}
\def\x{\mathbf x}
\def\y{\mathbf y}
\def\z{\mathbf z}
\def\t{\mathbf t}
\def\cD{\mathcal D}
\def\cC{\mathcal C}
\def\cH{\mathcal H}
\def\cM{{\mathcal M}}
\def\cK{\mathcal K}
\def\cQ{\mathcal Q}
\def\cU{\mathcal U}
\def\cS{\mathcal S}
\def\cT{\mathcal T}
\def\cR{\mathcal R}
\def\cN{\mathcal N}
\def\cA{\mathcal A}
\def\cF{\mathcal F}
\def\cL{\mathcal L}
\def\cP{\mathcal P}
\def\cI{\mathcal I}

\def\Sq{{\mathcal S}_{n,n}(q)}
\def\Sqn{{\mathcal S}_{n,n}(q^n)}
\def\Oq{\Omega({\mathcal S}_{n,n})(q)}
\def\Oqn{\Omega({\mathcal S}_{n,n})(q^n)}

\def\PG{{\rm PG}}
\def\GF{{\rm GF}}

\def\Pg{PG(5,q)}
\def\pg{PG(3,q^2)}
\def\ppg{PG(3,q)}
\def\HH{{\cal H}(2,q^2)}
\def\F{\mathbb F}
\def\Ft{\mathbb F_{q^t}}
\def\P{\mathbb P}
\def\V{\mathbb V}
\def\bS{\mathbb S}
\def\G{\mathbb G}
\def\E{\mathbb E}
\def\N{\mathbb N}
\def\K{\mathbb K}
\def\C{\mathbb C}
\def\M{\mathbb M}
\def\ps@headings{
 \def\@oddhead{\footnotesize\rm\hfill\runningheadodd\hfill\thepage}
 \def\@evenhead{\footnotesize\rm\thepage\hfill\runningheadeven\hfill}
 \def\@oddfoot{}
 \def\@evenfoot{\@oddfoot}
}

\def\E{\mathbb E}
\def\P{\mathbb P}
\def\cD{\mathcal D}
\def\cI{\mathcal I}
\def\cO{\mathcal O}
\def\cG{\mathcal G}

\def\Sq{{\mathcal S}_{n,n}(q)}
\def\Sqn{{\mathcal S}_{n,n}(q^n)}
\def\Oq{\Omega({\mathcal S}_{n,n})(q)}
\def\Oqn{\Omega({\mathcal S}_{n,n})(q^n)}

\def\uv{\underbar v}
\def\uw{\underbar w}

\begin{abstract}
In this paper we study a family of scattered $\F_q$--linear sets of
rank $tn$ of the projective space $PG(2n-1,q^t)$  ($n \geq 1$,
$t\geq 3$), called of {\it pseudoregulus type}, generalizing results
contained in \cite{LV} and in \cite{MPT}. As an application, we
characterize, in terms of the associated linear sets, some classical
families of semifields: the Generalized Twisted Fields and the
2-dimensional Knuth semifields.

\end{abstract}

\bigskip

\par\noindent
{\bf Keywords:}  linear set, subgeometry, semifield

\section{Introduction}
In recent years the theory of linear sets has constantly increased
its importance mainly because of its connection with other geometric
objects such as blocking sets, translation ovoids and semifield
planes (for an overview see \cite{Polverino2010}).

In this paper we study a family of maximum scattered $\mathbb
F_q$-linear sets  of the projective space $\Lambda=PG(2n-1,q^t)$
($n\geq 1$, $ t\geq 3$). They were first introduced in \cite{MPT}
for $n=2$ and $t=3$, and further generalized in \cite{LV} for
$n\geq 2$ and $t=3$. If $\Lambda$ is not a line, it is possible to
associate with any such linear set a family of
$(q^{nt}-1)/(q^t-1)$ pairwise disjoint lines admitting exactly two
$(n-1)$-dimensional transversal spaces. Such a set of lines is
called {\it pseudoregulus}, in analogy to the pseudoregulus of
$PG(3,q^2)$ introduced by Freeman in \cite{Fr}. For this reason,
we refer to the relevant family of linear sets as {\it linear sets
of pseudoregulus type}.

All maximum scattered $\mathbb F_q$-linear sets of
$\Lambda=PG(2n-1,q^3)$ ($n\geq 2$) are of pseudoregulus type and
they are all equivalent under the action of the collineation group
of $\Lambda$ (see \cite[Propositions 2.7 and 2.8]{MPT} for $n=2$ and
\cite[Section 3 and Theorem 4]{LV} for $n\geq 3$). In this paper, we
characterize $\mathbb F_q$--linear sets of $PG(2n-1,q^t)$ ($n\geq
1$, $t\geq 3$) of pseudoregulus type in terms of the associated
projected subgeometry and we prove that, when $n>1$ there are $\varphi(t)/2$
(where $\varphi$ denotes the Euler's phi function) orbits of such
$\mathbb F_q$-linear sets under the action of the collineation group
of $PG(2n-1,q^t)$ (Theorems \ref{theorem:DesargProj},
\ref{thm:numbprojequipseudoregulus}). Also, we show that,
when $t\geq 4$ and $q>3$, there exist examples of maximum scattered
$\F_q$--linear sets of $PG(2n-1,q^t)$ ($n\geq 1$) which are not of
pseudoregulus type (Example \ref{example:noPseud}).

Finally, in Section \ref{sec:SegreVariety} we first prove some
geometric properties of the Segre Variety $\cS_{n,n}$ of the
projective space $\P=PG(n^2-1,q)$. These properties, together with
the results contained in Sections \ref{subsec:scattered} and
\ref{sec:scattered-line}, allow us to describe and characterize the
linear sets associated with some classical semifields: the
Generalized Twisted Fields and the Knuth semifields 2-dimensional
over their left nucleus (Propositions \ref{prop:linsetGenTwFi} and
\ref{prop:knuthlinearset}, Theorems \ref{theorem:caraGenTwFie} and
\ref{theorem:caraKnuth}).

\section{Preliminary results}\label{sec:preliminary}
A $(t-1)$--spread of a projective space $PG(nt-1, q)$ is a family
$\cS$ of mutually disjoint subspaces of dimension $t-1$ such that
each point of $PG(nt-1,q)$ belongs to an element of $\cS$.\\
A first example of spread can be obtained in the following way. Let
$PG(n-1,q^t)=PG(V,\F_{q^t})$. Any point $P$ of $PG(n-1,q^t)$ defines
a $(t-1)$--dimensional subspace $X(P)$ of the projective space
$PG(nt-1,q)=PG(V,\F_q)$ and $\cD=\{X(P):P\in PG(n-1,q^t)\}$ is a
spread of $PG(nt-1,q)$, called a {\it Desarguesian spread} (see
\cite{Segre}, Section 25)$^($\footnote{In \cite{Segre} a
Desarguesian spread is called "Sistema Grafico Elementare".}$^)$. If
$n>2$, the incidence structure $\Pi_{n-1}(\cD)$, whose points are
the elements of $\cD$ and whose lines are the $(2t-1)$--dimensional
subspaces of $PG(nt-1,q)$ joining two distinct elements of $\cD$, is
isomorphic to $PG(n-1,q^t)$. The structure $\Pi_{n-1}(\cD)$ is
called the $\F_q$--{\em linear representation} of $PG(n-1,q^t)$.

A Desarguesian $(t-1)$--spread of $PG(nt-1,q)$ can also be obtained
 as follows  (see \cite[Section 27]{Segre}, \cite{Lu1999} and
\cite{BaLu2011}). Embed $\Sigma\simeq PG(nt-1,q)$ in $\Sigma^* =
PG(nt-1,q^t)$ in such a way that $\Sigma$ is the set of fixed points
of a semilinear collineation $\Psi$ of $\Sigma^*$ of order $t$. Let
$\Theta= PG(n-1,q^t)$ be a subspace of $\Sigma^*$ such that
$\Theta,$ $\Theta^{\Psi},...,$ $\Theta^{\Psi^{t-1}}$ span the whole
space $\Sigma^*.$ If $P$ is a point of $\Theta$, $X^*(P)=\langle
P,P^{\Psi},...,P^{\Psi^{t-1}}\rangle_{q^t}$ is a
$(t-1)$--dimensional subspace of $\Sigma^*$ defining a
$(t-1)$--dimensional subspace $X(P) = X^*(P) \cap \Sigma$ of
$\Sigma$. As $P$ varies over the subspace $\Theta$ we get a set of
$q^{t(n-1)}+q^{t(n-2)}+\dots+q^t+1$ mutually disjoint
$(t-1)$--dimensional subspaces of $\Sigma.$ Such a set is denoted by
$\cD=\cD(\Theta)$ and it turns out to be a Desarguesian
$(t-1)$--spread of $\Sigma$. The $(n-1)$--dimensional subspaces
$\Theta,$ $\Theta^{\Psi},...,$ $\Theta^{\Psi^{t-1}}$  are uniquely
defined by the Desarguesian spread $\cD$, i.e. $\cD(\Theta)=\cD(X)$
if and only if $X=\Theta^{\Psi^i}$ for some $i\in \{0,1,\dots,t-1\}$
and, following the terminology used by Segre \cite[page 29]{Segre},
we will refer to them as {\it director spaces} of $\cD$ (see  also
\cite[Theorem 3]{LV}).

\begin{remark}\label{rema:CharDesSpread} \rm
Let $\cS$ be a $(t-1)$-spread of $\Sigma = PG(nt-1,q)$ embedded in
$\Sigma^* = PG(nt-1,q^t)$ in such a way that $\Sigma=Fix(\Psi)$
where $\Psi$ is a semilinear collineation  of $\Sigma^*$ of order
$t$. If  $H$ is an $(n-1)$-dimensional subspace of $\Sigma^*$ such
that\\
 (i) $\Sigma^*=\langle H, H^\Psi,\dots,H^{\Psi^{t-1}}\rangle_{q^t}$;\\
 (ii) $X^*\cap H\neq \emptyset$ for each
$(t-1)$-dimensional
subspace $X^*$ of $\Sigma^*$ such that $X^*\cap \Sigma \in \cS$;\\
then it is easy to see that $\cD(H)=\cS$, i.e. $\cS$ is a
Desarguesian spread and $H$ is one of its director spaces.
\end{remark}

\subsection{Linear sets}\label{sec:linearsets} Let
$\Lambda=PG(r-1,q^t)=PG(V,\F_{q^t})$, $q=p^h$, $p$ prime, and let
$L$ be a set of points of $\Lambda$. The set $L$ is said to be an
{\em $\F_q$--linear} set of $\Lambda$ if it is defined by the
non--zero vectors of an $\F_q$--vector subspace $U$ of $V$, i.e., $
L=L_U=\{\langle {\bf u}\rangle_{q^t}: {\bf u}\in U\setminus\{{\bf
0}\}\}$. If $dim _{\F_q} U=k$, we say that $L$ has {\it rank} $k$.
If $\Omega=PG(W,\F_{q^t})$ is a subspace of $\Lambda$ and $L_U$ is
an $\F_q$-linear set of $\Lambda$, then $\Omega\cap L_U$ is an
$\F_q$--linear set of $\Omega$ defined by the $\F_q$--vector
subspace $U\cap W$, and we say that $\Omega$ has weight $i$ in $L_U$
if $dim_{\F_q}(W\cap U)=i$ and we write $\omega_{L_U}(\Omega)=i$. If
$L_U\neq \emptyset$, we have
\begin{eqnarray}
&&\hskip -.8cm |L_U|\leq  q^{k-1}+q^{k-2}+\dots+q+1,\label{form4}\\
&&\hskip -.8cm |L_U |\equiv 1\, (mod \, q)\label{form3}.
\end{eqnarray}

\noindent
 For further details on linear sets see
\cite{Polverino2010}.

\medskip

An $\F_q$--linear set $L_U$ of $\Lambda$ of rank $k$ is {\em
scattered} if all of its points have weight 1, or equivalently, if
$L_U$ has maximum size $q^{k-1}+q^{k-2}+\cdots+q+1$.

In \cite{BL}, the authors prove the following result on scattered
linear sets.
    \begin{theorem}{\rm\cite[Theorem 4.2]{BL}}\label{Thm. BL}
A scattered  $\F_q$--linear set of $PG(r-1,q^t)$ has rank at most
rt/2. \end{theorem}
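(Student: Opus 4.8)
The plan is to reinterpret the scattered hypothesis as a direct-sum condition and then finish with a single dimension count. Regard $V$ as an $\F_q$-vector space of dimension $rt$, so that $U$ is a $k$-dimensional $\F_q$-subspace and $L_U$ is scattered precisely when $\dim_{\F_q}(U \cap \langle \mathbf{v}\rangle_{q^t}) = 1$ for every $\mathbf{v} \in U \setminus \{\mathbf{0}\}$. Fix any $\lambda \in \F_{q^t}\setminus \F_q$ (which exists since $t\ge 2$). I would prove the crucial claim that scatteredness forces
\[
U \cap \lambda U = \{\mathbf{0}\},
\]
where $\lambda U = \{\lambda \mathbf{u} : \mathbf{u}\in U\}$.

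For the claim, suppose $\mathbf{v}\in U\cap\lambda U$ with $\mathbf{v}\ne\mathbf{0}$. Then $\mathbf{v}\in U$ and $\mathbf{v}=\lambda\mathbf{u}$ for some $\mathbf{u}\in U$, so $\mathbf{u}=\lambda^{-1}\mathbf{v}$ also lies in $U$. Both $\mathbf{v}$ and $\lambda^{-1}\mathbf{v}$ then belong to $U\cap\langle\mathbf{v}\rangle_{q^t}$; since $\lambda^{-1}\notin\F_q$ these two vectors are $\F_q$-linearly independent, giving $\dim_{\F_q}(U\cap\langle\mathbf{v}\rangle_{q^t})\ge 2$ and contradicting scatteredness. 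This establishes the claim.

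To conclude, note that multiplication by the nonzero scalar $\lambda$ is an $\F_q$-linear bijection of $V$, so $\lambda U$ is again a $k$-dimensional $\F_q$-subspace. By the claim $U$ and $\lambda U$ are in direct sum, hence $2k = \dim_{\F_q}(U\oplus\lambda U)\le \dim_{\F_q}V = rt$, which yields $k \le rt/2$. The only real content is the reformulation in the first step: once scatteredness is recast as $U\cap\lambda U=\{\mathbf{0}\}$, the bound is immediate, so the main thing to get right is the observation that the weight-one condition is exactly an obstruction to $U$ containing any $\F_{q^t}$-proportional pair whose ratio lies outside $\F_q$.
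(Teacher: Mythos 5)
Your proof is correct, and there is nothing in the paper to compare it against line by line: the statement is quoted verbatim from Blokhuis--Lavrauw (\cite[Theorem 4.2]{BL}) and the paper gives no proof of it. Your argument is a complete, self-contained derivation. The key claim is exactly right: if $\mathbf{v}\in U\cap\lambda U$ with $\mathbf{v}\neq\mathbf{0}$, then $\mathbf{v}$ and $\lambda^{-1}\mathbf{v}$ are two vectors of $U$ lying in $\langle\mathbf{v}\rangle_{q^t}$, and they are $\F_q$--independent because $\lambda^{-1}\notin\F_q$ (which follows from $\lambda\notin\F_q$ since $\F_q$ is a subfield closed under inversion -- a one-line justification worth inserting); this makes the point $\langle\mathbf{v}\rangle_{q^t}\in L_U$ have weight at least $2$, contradicting scatteredness. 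The conclusion $2k=\dim_{\F_q}(U\oplus\lambda U)\leq\dim_{\F_q}V=rt$ is then immediate. Two remarks on how this sits relative to the cited source: first, the original Blokhuis--Lavrauw result is phrased projectively, for scattered subspaces with respect to a Desarguesian $(t-1)$--spread of $PG(rt-1,q)$, so their treatment carries the extra layer of translating spread language into the $\F_{q^t}$--structure on $V$, whereas your vector-space formulation (scattered $\iff$ every nonzero $\mathbf{v}\in U$ has $\dim_{\F_q}(U\cap\langle\mathbf{v}\rangle_{q^t})=1$) gets to the algebraic core directly; this multiplication-by-$\lambda$ argument is the standard modern way to obtain the bound. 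Second, your proof makes visible exactly where $t\geq 2$ enters (the existence of $\lambda\in\F_{q^t}\setminus\F_q$); this hypothesis is implicit in the paper's statement, and indeed for $t=1$ the bound fails trivially, so flagging it is a feature, not a defect.
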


\noindent A scattered $\F_q$--linear set $L$ of $PG(r-1,q^t)$ of
maximum rank $rt/2$ is called {\em maximum scattered} linear set.

\begin{remark}\label{rema:sclinset} \rm
Note that if $L_U$ is a scattered $\F_q$-linear set of $PG(r-1,q^t)$
of rank $k$ containing more than one point, then by
$|L_U|=q^{k-1}+q^{k-2}+\cdots+q+1$ and (\ref{form3}), $L_U$ is not
an $\F_{q^s}$-linear set for each subfield $\F_{q^s}$ of $\F_{q^t}$
properly containing $\F_q$. In other words, a scattered
$\F_q$--linear set $L$ of rank $k>1$ of $PG(r-1,q^t)$ is not a
linear set of rank $n<k$. Also, by Theorem \ref{Thm. BL}, a maximum
scattered linear set of $PG(r-1,q^t)$ spans the whole space.
\end{remark}

If $dim_{\F_q}U=dim_{\F_{q^t}}V=r$ and $\langle U \rangle_{q^t}=V$,
then  the $\F_q$--linear set $L_U$ is a {\it subgeometry} of
$PG(V,\F_{q^t})=PG(r-1,q^t)$ isomorphic to $PG(r-1,q).$ If $t=2$,
then $L_U$ is a Baer subgeometry of $PG(r-1,q^2)$.

In \cite{LP}, the authors give the following characterization of
$\F_q$--linear sets. Let $\Sigma=PG(k-1,q)$ be a subgeometry of
$\Sigma^*=PG(k-1,q^t)$, let $\Gamma$ be a $(k-r-1)$--dimensional
subspace  of $\Sigma^*$ disjoint from $\Sigma$ and let
$\Lambda=PG(r-1,q^t)$ be an $(r-1)$--dimensional subspace of
$\Sigma^*$ disjoint from $\Gamma$. Denote by
$$L=\{\langle \Gamma,P \rangle_{q^t} \cap \Lambda  \,: \, P\in \Sigma \}$$
the projection  of $\Sigma$ from $\Gamma$ to $\Lambda.$ We call
$\Gamma$ and $\Lambda$, respectively, the $center$ and the $axis$ of
the projection. Denote by $p_{\Gamma,\Lambda}$ the map from $\Sigma$
to $L$ defined by $P \mapsto \langle \Gamma,P\rangle_{q^t} \cap
\Lambda$ for each point $P$ of $\Sigma.$ By definition
$p_{\Gamma,\Lambda}$ is surjective and
$L=p_{\Gamma,\Lambda}(\Sigma).$

\begin{theorem}{\rm\cite[Theorems 1 and 2]{LP}}\label{Theor LP1}
If $L$ is a projection of $\Sigma=PG(k-1,q)$ to
$\Lambda=PG(r-1,q^t),$ then $L$ is an $\F_q$--linear set of
$\Lambda$ of rank $k$ and $\langle L \rangle_{q^t}=\Lambda$.
Conversely, if $L$ is an $\F_q$--linear set of $\Lambda$ of rank $k$
and $\langle L \rangle_{q^t}=\Lambda$, then either $L$ is a
subgeometry of $\Lambda$ or for each $(k-r-1)$--dimensional subspace
$\Gamma$ of $\Sigma^*=PG(k-1,q^t)$ disjoint from $\Lambda$ there
exists a subgeometry $\Sigma$ of $\Sigma^*$ disjoint from $\Gamma$
such that $L=p_{\Gamma,\Lambda}(\Sigma).$
\end{theorem}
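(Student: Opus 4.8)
The plan is to translate the geometric projection into linear algebra and to recognise $p_{\Gamma,\Lambda}$ as the projectivisation of an ordinary $\F_{q^t}$-linear projection. Write $\Sigma^*=PG(V,\F_{q^t})$ with $\dim_{\F_{q^t}}V=k$, let $A\subseteq V$ be the $r$-dimensional $\F_{q^t}$-subspace with $\Lambda=PG(A,\F_{q^t})$, and let $W\subseteq V$ be the $(k-r)$-dimensional $\F_{q^t}$-subspace with $\Gamma=PG(W,\F_{q^t})$. Since $\Gamma\cap\Lambda=\emptyset$ means $W\cap A=\{0\}$ and $\dim_{\F_{q^t}}W+\dim_{\F_{q^t}}A=k=\dim_{\F_{q^t}}V$, we obtain $V=W\oplus A$; let $\pi\colon V\to A$ be the $\F_{q^t}$-linear projection with kernel $W$.

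First, for the direct statement, I would show that $p_{\Gamma,\Lambda}(\langle u\rangle_{q^t})=\langle\pi(u)\rangle_{q^t}$. Indeed $\langle\Gamma,\langle u\rangle_{q^t}\rangle_{q^t}$ corresponds to $W+\langle u\rangle_{q^t}=W+\langle\pi(u)\rangle_{q^t}$, and intersecting with $A$ and using the modular law together with $W\cap A=\{0\}$ gives $(W+\langle\pi(u)\rangle_{q^t})\cap A=\langle\pi(u)\rangle_{q^t}$ (a single point, precisely because $\Sigma\cap\Gamma=\emptyset$ forces $\pi(u)\neq 0$). Writing $\Sigma=L_{U_0}$ with $\dim_{\F_q}U_0=k$ and $\langle U_0\rangle_{q^t}=V$, it follows that $L=p_{\Gamma,\Lambda}(\Sigma)=L_{\pi(U_0)}$. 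Because $\Sigma\cap\Gamma=\emptyset$ means $U_0\cap W=\{0\}=\ker(\pi|_{U_0})$, the restriction $\pi|_{U_0}$ is injective, so $\dim_{\F_q}\pi(U_0)=k$ and $L$ is $\F_q$-linear of rank $k$; and since $\pi$ is $\F_{q^t}$-linear and surjective onto $A$, $\langle\pi(U_0)\rangle_{q^t}=\pi(\langle U_0\rangle_{q^t})=\pi(V)=A$, that is $\langle L\rangle_{q^t}=\Lambda$.

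For the converse, write $L=L_U$ with $U\subseteq A$, $\dim_{\F_q}U=k$ and $\langle U\rangle_{q^t}=A$; the last condition forces $k\geq r$. If $k=r$ then $U$ is an $\F_q$-form of $A$ and $L$ is a subgeometry of $\Lambda$, which is the first alternative (here $\Gamma$ has dimension $k-r-1=-1$ and no projection occurs). If $k>r$, fix any $\Gamma=PG(W,\F_{q^t})$ disjoint from $\Lambda$, so again $V=W\oplus A$ with $\pi$ as above. I would then construct the required subgeometry by a lifting argument: choose an $\F_q$-basis $e_1,\dots,e_k$ of $U$ whose first $r$ vectors form an $\F_{q^t}$-basis of $A$ (possible since $\langle U\rangle_{q^t}=A$), pick an $\F_{q^t}$-basis $f_1,\dots,f_{k-r}$ of $W$, and set $U_0=\langle e_1,\dots,e_r,e_{r+1}+f_1,\dots,e_k+f_{k-r}\rangle_{\F_q}$. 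Applying $\pi$ shows at once that $\pi(U_0)=U$ and, by $\F_q$-independence of the $e_i$, that $\dim_{\F_q}U_0=k$ and $U_0\cap W=\{0\}$; moreover $\langle U_0\rangle_{q^t}$ contains $A$ together with every $f_j$, hence equals $V$. Thus $\Sigma=L_{U_0}$ is a subgeometry of $\Sigma^*$ disjoint from $\Gamma$, and by the computation of the direct part $p_{\Gamma,\Lambda}(\Sigma)=L_{\pi(U_0)}=L_U=L$.

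The routine verifications (modular law, independence) are straightforward; the genuine content, and the step I would be most careful about, is the lifting construction in the converse: one must choose the $\F_q$-generators of $U_0$ so that $U_0$ simultaneously projects onto $U$, spans $V$ over $\F_{q^t}$, and avoids $W$. Splitting the chosen basis of $U$ into an $\F_{q^t}$-basis of $A$ plus extra generators, and attaching to the extra generators an $\F_{q^t}$-basis of $W$, is exactly what forces all three conditions to hold simultaneously; injectivity of $\pi|_{U_0}$ then delivers the disjointness from $\Gamma$ for free.
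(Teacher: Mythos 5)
Your proposal is correct, but there is nothing in this paper to compare it against: Theorem \ref{Theor LP1} is imported verbatim from \cite[Theorems 1 and 2]{LP} as a known tool, and the paper gives no proof of it. Judged on its own merits, your argument is complete. The direct part is sound: writing $V=W\oplus A$ and identifying $p_{\Gamma,\Lambda}$ with the projectivisation of the $\F_{q^t}$-linear projection $\pi\colon V\to A$ gives $L=L_{\pi(U_0)}$, with rank $k$ following from the injectivity of $\pi|_{U_0}$ (equivalent to $\Sigma\cap\Gamma=\emptyset$) and $\langle L\rangle_{q^t}=\Lambda$ following from the $\F_{q^t}$-linearity and surjectivity of $\pi$. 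The converse is where a blind attempt could go wrong, and your lifting construction handles it correctly: choosing $e_1,\dots,e_k$ an $\F_q$-basis of $U$ whose first $r$ vectors are an $\F_{q^t}$-basis of $A$ (possible since a spanning set contains a basis, and $\F_{q^t}$-independent vectors are $\F_q$-independent), the set $U_0=\langle e_1,\dots,e_r,e_{r+1}+f_1,\dots,e_k+f_{k-r}\rangle_{\F_q}$ satisfies all three requirements at once: $\pi(U_0)=U$; $\dim_{\F_q}U_0=k$ and $U_0\cap W=\{0\}$ (a relation $\sum a_ie_i+\sum b_j(e_{r+j}+f_j)\in W$ splits along $W\oplus A$ and kills all coefficients); and $\langle U_0\rangle_{q^t}=V$, since $A=\langle e_1,\dots,e_r\rangle_{q^t}\subseteq\langle U_0\rangle_{q^t}$ forces each $f_j=(e_{r+j}+f_j)-e_{r+j}$ into $\langle U_0\rangle_{q^t}$. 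The case split $k=r$ (subgeometry, empty center) versus $k>r$ (genuine projection) correctly reproduces the disjunction in the statement, and applying your direct-part computation to $\Sigma=L_{U_0}$ closes the loop. This is essentially the standard argument for this result, so no gap.
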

Also, in \cite{LaVanVo2010} it has been proven:
\begin{theorem}{\rm\cite[Theorem 3]{LaVanVo2010}}\label{thm:LavVanEquivalLinearSet}
Let $L_1=p_{\Gamma_1,\Lambda}(\Sigma_1)$ and
$L_2=p_{\Gamma_2,\Lambda}(\Sigma_2)$ be two $\F_q$--linear sets of
rank $k$ of $\Lambda=\langle L_1\rangle_{q^t}=\langle
L_2\rangle_{q^t}$, and suppose that $L_i$ is not a linear set of
rank $n<k$. Then $L_1$ and $L_2$ are projectively equivalent if and
only if there exists $\beta\in Aut(\Sigma^*)$ such that
$\Sigma_1^\beta=\Sigma_2$ and $\Gamma_1^\beta=\Gamma_2$.
\end{theorem}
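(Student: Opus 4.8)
The plan is to prove the two implications separately, handling the \emph{if} direction by a direct perspectivity argument and the \emph{only if} direction by first extending the equivalence of $\Lambda$ to the ambient space $\Sigma^*$ and then proving a rigidity statement for the lift of a linear set to a subgeometry.

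For the \emph{if} direction, suppose $\beta\in Aut(\Sigma^*)$ satisfies $\Sigma_1^\beta=\Sigma_2$ and $\Gamma_1^\beta=\Gamma_2$. Applying $\beta$ to the whole configuration, I would observe that it sends $L_1=p_{\Gamma_1,\Lambda}(\Sigma_1)$ to $p_{\Gamma_2,\Lambda^\beta}(\Sigma_2)$, a linear set lying on the (possibly different) axis $\Lambda^\beta$. Since $\Lambda$ and $\Lambda^\beta$ are both disjoint from the common center $\Gamma_2$, the projection $p_{\Gamma_2,\Lambda}$ restricts to a collineation $\rho\colon\Lambda^\beta\to\Lambda$, and by construction $\rho$ maps $p_{\Gamma_2,\Lambda^\beta}(P)$ to $p_{\Gamma_2,\Lambda}(P)$ for every point $P$ of $\Sigma_2$; hence $\rho(L_1^\beta)=L_2$. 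The composition of $\beta|_\Lambda$ with $\rho$ is then a collineation of $\Lambda$ carrying $L_1$ onto $L_2$, which is the required projective equivalence.

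For the \emph{only if} direction, assume $\phi$ is a collineation of $\Lambda$ with $\phi(L_1)=L_2$. First I would extend $\phi$ to a collineation $\Phi\in Aut(\Sigma^*)$ with $\Phi|_\Lambda=\phi$; since $\Gamma_1$ and $\Gamma_2$ are complementary to $\Lambda$ of the same dimension, the extension can moreover be chosen so that $\Gamma_1^\Phi=\Gamma_2$ (fix a semilinear representative of $\phi$ on the axis and extend it to the centers by any semilinear isomorphism using the same field automorphism). Then $\Sigma_1^\Phi$ together with $\Gamma_2$ projects onto $\Phi(L_1)=\phi(L_1)=L_2$, exactly as $\Sigma_2$ does. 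This reduces everything to the following rigidity statement: two subgeometries $\Sigma,\Sigma'$ of $\Sigma^*$ that project from a common center $\Gamma$ onto the \emph{same} $\F_q$--linear set $L$ of $\Lambda$ are equivalent under a collineation $\gamma$ of $\Sigma^*$ fixing $\Gamma$. Granting this, the composite $\beta=\Phi\gamma$ with $\gamma(\Sigma_1^\Phi)=\Sigma_2$ and $\gamma(\Gamma_2)=\Gamma_2$ satisfies $\Sigma_1^\beta=\Sigma_2$ and $\Gamma_1^\beta=\Gamma_2$, as required.

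To establish the rigidity statement I would pass to vector spaces, writing $V=Z\oplus Y$ over $\F_{q^t}$ with $\Gamma=PG(Z,\F_{q^t})$ and $\Lambda=PG(Y,\F_{q^t})$, and identifying the projection with the $\F_{q^t}$--linear map $\pi\colon V\to Y$ along $Z$. By Theorem \ref{Theor LP1}, a subgeometry projecting onto $L$ is the image $PG(W,\F_{q^t})$ of an $\F_q$--form $W$ disjoint from $Z$, and every such $W$ is the graph $W=\{u+s(u):u\in U\}$ of an $\F_q$--linear map $s\colon U\to Z$, where $U=\pi(W)$ is an $\F_q$--subspace of $Y$ with $L_U=L$. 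Writing the collineations of $\Sigma^*$ fixing $\Gamma$ in block form relative to $Z\oplus Y$, I would check that their action realizes precisely the admissible changes of the data $(U,s)$: the quotient block moves $U$ within its $\Gamma L(Y,\F_{q^t})$--orbit while the center block adjusts $s$ freely. The crux, and the step where the assumption that $L_i$ is not a linear set of rank $n<k$ is indispensable (compare Remark \ref{rema:sclinset}), is the uniqueness of the representing subspace: this non--degeneracy forces the two subspaces $U,U'$ underlying $\Sigma$ and $\Sigma'$ to be $\Gamma L(Y,\F_{q^t})$--equivalent, after which the residual freedom in the center block is used to match the graph maps $s$ and $s'$. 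I expect this uniqueness, up to semilinear equivalence, to be the main obstacle, since without the non--degeneracy assumption a linear set can admit genuinely inequivalent representations of the same rank and the correspondence with pairs $(\Sigma,\Gamma)$ would break down.
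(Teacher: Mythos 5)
\noindent First, a remark on the ground rules of this comparison: the paper does not prove this statement at all --- it is quoted verbatim from the reference given in the theorem header --- so there is no in-paper proof to measure your argument against, and I assess it on its own terms. Your \emph{if} direction is correct: the perspectivity $\rho=p_{\Gamma_2,\Lambda}|_{\Lambda^\beta}$ argument is exactly the observation recorded in Remark \ref{rem:axis} (the projection does not depend on the axis). Your reduction of the \emph{only if} direction --- extend $\phi$ to $\Phi\in Aut(\Sigma^*)$ with $\Gamma_1^\Phi=\Gamma_2$, then invoke a rigidity statement for two subgeometries with a common center and the same projection --- is also sound \emph{as a reduction}.

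\noindent The gap is in the crux of your rigidity argument, and it is fatal: the hypothesis that $L$ is not a linear set of rank $n<k$ does \emph{not} force two subspaces $U,U'$ with $L_U=L_{U'}$ to be $\Gamma L(Y,\F_{q^t})$--equivalent, and the paper itself supplies the counterexample. Take $t=5$, $Y=\F_{q^5}^2$, $U=W_{1,\sigma_1}=\{(x,x^{q}):x\in\F_{q^5}\}$ and $U'=W_{1,\sigma_2}=\{(x,x^{q^2}):x\in\F_{q^5}\}$. By Remark \ref{rema:baersubline} these define the same point set, namely $\{\langle(1,a)\rangle_{q^5}: N_{q^5/q}(a)=1\}$, which is a maximum scattered linear set of rank $5$ and hence not a linear set of smaller rank by Remark \ref{rema:sclinset}; so the non--degeneracy hypothesis holds. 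Yet no invertible semilinear map $h$ of $Y$ carries $U$ to $U'$: componentwise field automorphisms fix each of $U,U'$, so one may take $h$ linear, $h(x,y)=(ax+by,cx+dy)$, and $h(U)=U'$ forces $cx+dx^{q}=a^{q^2}x^{q^2}+b^{q^2}x^{q^3}$ for all $x\in\F_{q^5}$; a $q$--polynomial of degree $q^3<q^5$ vanishing identically is the zero polynomial, whence $a=b=c=d=0$. Since both $U$ and $U'$ do lift, through one and the same center $\Gamma=PG(Z,\F_{q^5})$ in $\Sigma^*=PG(4,q^5)$, to subgeometries $\Sigma_1,\Sigma_2$ projecting onto $L$ (build graph maps $s_i$ sending enough $\F_q$--basis vectors onto an $\F_{q^5}$--basis of $Z$, in the spirit of Theorem \ref{Theor LP1}), your rigidity statement fails outright: a collineation $\gamma$ of $\Sigma^*$ fixing $\Gamma$ with $\Sigma_1^\gamma=\Sigma_2$ would descend modulo $Z$ (the $\F_q$--form of a subgeometry is unique up to an $\F_{q^t}$--scalar) to a semilinear equivalence between $U$ and $U'$, which we just excluded.

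\noindent This is not a repairable defect of your strategy but a delicacy of the statement itself: applying the quoted theorem to $L_1=L_2$, $\Gamma_1=\Gamma_2$ yields precisely your rigidity claim, so the result in the stated generality breaks on the same example. This is the ``simplicity'' issue for linear sets isolated by B.~Csajb\'ok and C.~Zanella (\emph{On the equivalence of linear sets}, Des.\ Codes Cryptogr.\ {\bf 81} (2016)), whose counterexamples --- subgeometries $\Sigma_1,\Sigma_2$ and a common center $\Gamma$ with no collineation fixing $\Gamma$ and mapping $\Sigma_1$ to $\Sigma_2$ --- have exactly this shape. Your closing instinct, that uniqueness of the representing subspace is the main obstacle, was right; the error is the belief that the non--degeneracy hypothesis overcomes it, whereas scattered linear sets satisfy that hypothesis as strongly as possible and still admit genuinely inequivalent representations.
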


\begin{remark}\label{rem:axis}
{\rm Note that, if $S_\Gamma=\Sigma^*/\Gamma \simeq PG(r-1,q^t)$
denotes the $(r-1)$--dimensional space obtained as quotient geometry
of $\Sigma^*$ over $\Gamma$, then the set $L_{\Gamma,\Sigma}$ of the
$(k-r)$-dimensional subspaces of $\Sigma^*$ containing $\Gamma$ and
with non-empty intersection with $\Sigma$ is an $\F_q$-linear set of
the space $S_{\Gamma}$ isomorphic to
$L=p_{\Gamma,\Lambda}({\Sigma})$, for each $(r-1)$--dimensional
space $\Lambda$ disjoint from $\Gamma$. This means that
$p_{\Gamma,\Lambda}({\Sigma})$ is isomorphic to the $\F_q$--linear
set $\{P+\Gamma:\ P\in\Sigma\}$ of the quotient space $S_\Gamma$,
and hence it does not depend on the choice of the axis $\Lambda$,
and we will simply denote it as $p_{\Gamma}(\Sigma)$.}
\end{remark}

\section{Maximum scattered $\F_q$--linear sets of
pseudoregulus  type in $PG(2n-1,q^t)$}\label{subsec:scattered}

In this section we study a family of maximum scattered linear sets
to which a geometric structure, called {\it pseudoregulus}, can be
associated. This generalizes results contained in \cite{MPT},
\cite{LMPT} and \cite{LV}.

\begin{defin} \label{def:pseud} \rm
Let $L=L_U$ be a scattered $\F_q$--linear set of
$\Lambda=PG(2n-1,q^t)$ of rank $tn$, $t,n \geq 2$. We say that $L$
is of {\it
pseudoregulus type} if\\
(i) there exist $m=\frac{q^{nt}-1}{q^t-1}$ pairwise disjoint lines
of $\Lambda$, say $s_1,s_2,\dots,s_m$, such that
$$w_L(s_i)=t, \,\, i.e.\,\,  |L\cap s_i|=q^{t-1}+q^{t-2}+\cdots+q+1 \ \ \forall \,\, i=1,\dots,m;$$
(ii) there exist exactly two  $(n-1)$--dimensional subspaces $T_1$
and $T_2$ of $\Lambda$ disjoint from $L$ such that $T_j\cap
s_i\neq \emptyset$ for each  $i=1,\dots,m$ and $j=1,2$.
\end{defin}

\medskip

\noindent We call the set of lines $\mathcal{P}_L = \{s_i \colon
i=1,\dots,m\}$ the {\it $\F_q$--pseudoregulus} (or simply {\it
pseudoregulus}) of $\Lambda$ associated with $L$ and we refer to
$T_1$ and $T_2$ as {\it transversal spaces} of $\mathcal{P}_L$ (or
transversal spaces of $L$). Note that, by Remark
\ref{rema:sclinset}, $L$ spans the whole space and hence the
transversal spaces $T_1$ and $T_2$ are disjoint. When $t=n=2$, these
objects already appeared in \cite{Fr},  where the term
\`{\`{}}pseudoregulus\'{\'{}} was introduced for the first time.
\bigskip

If $L$ is a scattered $\F_q$-linear set of the projective space
$PG(r-1,q^t)$, then by Theorem \ref{Thm. BL} every $h$-dimensional
subspace $X_h$ of $PG(r-1,q^t)$ intersects $L$ in a linear set of
rank at most $(h+1)t/2$, i.e. the weight of $X_h$ in $L$ is at
most $(h+1)t/2$. So we get

\begin{prop} \label{prop:rette disgiunte}
If $t\geq 3$, then the lines of weight \,$t$ in a scattered
$\F_q$-linear set $L$ of $PG(r-1,q^t)$ of rank $h$ are pairwise
disjoint and hence the number of such lines is at most
$(q^h-1)/(q^t-1)$.
\end{prop}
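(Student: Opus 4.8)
The plan is to establish disjointness by contradiction and then deduce the numerical bound by a direct point-count. Write $L=L_U$ with $U$ an $\F_q$-subspace of $V$ of dimension $h$, where $PG(r-1,q^t)=PG(V,\F_{q^t})$, and recall that a line $s$ of weight $t$ corresponds to a $2$-dimensional $\F_{q^t}$-subspace $W$ with $\dim_{\F_q}(U\cap W)=t$. Suppose, for a contradiction, that two distinct lines $s_1$ and $s_2$ of weight $t$ share a point $P$. Let $W_1,W_2$ be the associated $2$-dimensional $\F_{q^t}$-subspaces; since two distinct lines meet in at most one point, $W_1\cap W_2$ is the $1$-dimensional $\F_{q^t}$-subspace $\langle\v\rangle_{q^t}$ corresponding to $P$, and $W_1+W_2$ is a $3$-dimensional $\F_{q^t}$-subspace spanning the plane $X_2=\langle s_1,s_2\rangle$.

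First I would bound $\dim_{\F_q}(U\cap W_1\cap W_2)$. Since $L$ is scattered, the point $P$ has weight at most $1$, that is $\dim_{\F_q}(U\cap\langle\v\rangle_{q^t})\leq 1$; as $U\cap W_1\cap W_2=U\cap\langle\v\rangle_{q^t}$, Grassmann's identity over $\F_q$ gives
$$\dim_{\F_q}\big((U\cap W_1)+(U\cap W_2)\big)=\dim_{\F_q}(U\cap W_1)+\dim_{\F_q}(U\cap W_2)-\dim_{\F_q}(U\cap W_1\cap W_2)\geq t+t-1=2t-1.$$
Because $(U\cap W_1)+(U\cap W_2)\subseteq U\cap(W_1+W_2)$, the plane $X_2$ has weight at least $2t-1$ in $L$. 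On the other hand, by the observation preceding the statement (Theorem \ref{Thm. BL} applied to the scattered restriction $X_2\cap L$), the weight of the plane $X_2$ is at most $(2+1)t/2=3t/2$. Combining the two estimates forces $2t-1\leq 3t/2$, i.e. $t\leq 2$, contradicting $t\geq 3$. Hence any two distinct lines of weight $t$ are disjoint.

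The hard part is the weight comparison above: one must check that the scattered hypothesis pins the weight of the common point at most $1$ and that the two rank-$t$ intersections are so nearly in direct sum that their span alone already overshoots the rank ceiling $3t/2$ guaranteed by the scattered bound on a plane. Once disjointness is established, the count is immediate. Each line $s_i$ of weight $t$ meets $L$ in a scattered $\F_q$-linear set of rank $t$, hence in exactly $(q^t-1)/(q-1)$ points by (\ref{form4}); since the lines are pairwise disjoint, the point sets $L\cap s_i$ are pairwise disjoint subsets of $L$, and $|L|\leq (q^h-1)/(q-1)$ by (\ref{form4}). Therefore the number of such lines is at most $\dfrac{(q^h-1)/(q-1)}{(q^t-1)/(q-1)}=\dfrac{q^h-1}{q^t-1}$, as claimed.
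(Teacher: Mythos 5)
Your proof is correct and follows essentially the same route as the paper's: both derive a contradiction by showing the plane spanned by two meeting weight-$t$ lines would have weight at least $2t-1$ (you make the Grassmann computation explicit where the paper leaves it implicit), then cap that weight at $3t/2$ via Theorem \ref{Thm. BL}, forcing $t\leq 2$; the final count by disjointness of the point sets $L\cap s_i$ is likewise identical.
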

\begin{proof}
If $\ell$ and $\ell'$ are distinct lines of $PG(r-1,q^t)$ of weight
$t$ in $L$ and $\ell\cap \ell'\neq \emptyset$, then the plane $\pi$
joining $\ell$ and $\ell'$ has weight  at least $2t-1$ in $L$. On
the other hand, since $\pi \cap L$ is a scattered $\F_q$-linear set
of the plane $\pi$, by Theorem \ref{Thm. BL}, we also have that the
weight of $\pi$ in $L$ is at most $3t/2$; so we get $t\leq 2$, a
contradiction. Hence, the number of lines of $PG(r-1,q^t)$ having
weight $t$ in $L$ is at most
$$\frac{q^{h-1}+q^{h-2}+\cdots+q+1}{q^{t-1}+q^{t-2}+\cdots+q+1}=\frac{q^h-1}{q^t-1}.$$
\end{proof}

\noindent As a consequence of Proposition \ref{prop:rette disgiunte}
we get

\begin{cor} \label{cor:unicPseudo}
If $L$ is an $\F_q$-linear set of pseudoregulus type of the
projective space $\Lambda=PG(2n-1,q^t)$, with  $t\geq 3$,  then the
associated pseudoregulus is the set of all the lines of $\Lambda$ of
weight $t$ in $L$. Hence, the pseudoregulus associated with $L$ and
its transversal spaces are uniquely determined.
\end{cor}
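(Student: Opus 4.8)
The plan is to force equality in the counting bound of Proposition \ref{prop:rette disgiunte}, turning the defining (a priori non-canonical) pseudoregulus into the intrinsic set of all lines of weight $t$ in $L$.

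First I would record that, since $L$ is of pseudoregulus type, it is a scattered $\F_q$--linear set of rank $tn$ contained in $\Lambda=PG(2n-1,q^t)=PG(V,\F_{q^t})$, where $\dim_{\F_{q^t}}V=2n$. Thus, in the notation of Theorem \ref{Thm. BL}, we have $r=2n$ and $L$ attains the maximum possible rank $tn=rt/2$. Applying Proposition \ref{prop:rette disgiunte} with $h=tn$ and using the hypothesis $t\geq 3$, the lines of $\Lambda$ of weight $t$ in $L$ are pairwise disjoint and their total number is at most $\frac{q^{tn}-1}{q^t-1}=m$.

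Next I would compare this bound with the pseudoregulus itself. By condition (i) of Definition \ref{def:pseud}, the associated pseudoregulus $\mathcal{P}_L=\{s_1,\dots,s_m\}$ already consists of exactly $m=\frac{q^{nt}-1}{q^t-1}$ pairwise disjoint lines of weight $t$ in $L$. Hence these $m$ lines saturate the upper bound just obtained, so no further line of $\Lambda$ can have weight $t$ in $L$. Therefore $\mathcal{P}_L$ coincides with the set of all lines of $\Lambda$ of weight $t$ in $L$. Since this latter set is attached to $L$ independently of any choice made in exhibiting a pseudoregulus, the pseudoregulus of $L$ is uniquely determined.

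Finally, I would deal with the transversal spaces. By condition (ii) of Definition \ref{def:pseud} there are exactly two $(n-1)$--dimensional subspaces of $\Lambda$, disjoint from $L$, meeting every line of the pseudoregulus. Since the pseudoregulus is now identified with the intrinsic set of all weight-$t$ lines, these two subspaces are themselves characterized intrinsically, as the unique pair of $(n-1)$--spaces disjoint from $L$ that meet every weight-$t$ line, and so $T_1$ and $T_2$ are uniquely determined. I do not expect a genuine obstacle here: the whole argument rests on the numerical coincidence between the bound $\frac{q^{tn}-1}{q^t-1}$ of Proposition \ref{prop:rette disgiunte} and the cardinality $m$ prescribed by Definition \ref{def:pseud}, and the only point requiring care is to check that $L$ indeed has rank $tn=rt/2$, so that this bound applies with the sharp exponent.
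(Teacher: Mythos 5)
Your proposal is correct and follows exactly the argument the paper intends: the paper presents Corollary \ref{cor:unicPseudo} as an immediate consequence of Proposition \ref{prop:rette disgiunte}, precisely because the $m=\frac{q^{nt}-1}{q^t-1}$ pairwise disjoint weight-$t$ lines of $\mathcal{P}_L$ saturate the bound $(q^{tn}-1)/(q^t-1)$ for a scattered linear set of rank $h=tn$. Your additional remark that the transversal spaces are then intrinsically determined via condition (ii) of Definition \ref{def:pseud} matches the paper's implicit reasoning as well.
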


\begin{remark} \rm  \label{rema:Subgeometry}
If $t=2$, a scattered $\F_q$-linear set $L$ of rank $2n$ of the
projective space $\Lambda$ is a Baer subgeometry   isomorphic to
$PG(2n-1,q)$ and each line spread of $L$ produces a set of lines of
$\Lambda$ satisfying (i) of Definition \ref{def:pseud} and each
Desarguesian line spread, say $\cD$,  of $L$ gives a set of lines of
$\Lambda$ satisfying both (i) and (ii) of Definition
\ref{def:pseud}. In this last case the transversal spaces are the
two $(n-1)$-dimensional director spaces of the  Desarguesian spread
$\cD$. So each maximum scattered $\F_q$-linear set $L$ of the
projective space $\Lambda=PG(2n-1,q^2)$ is of pseudoregulus type,
but in this case the associate pseudoregulus is not uniquely
defined; also, since these linear sets are Baer subgeometries, they
are all projectively equivalent. The same happens in the next case,
i.e. each maximum scattered $\F_q$-linear set $L$ of the projective
space $\Lambda=PG(2n-1,q^3)$ ($n\geq 2$) is of pseudoregulus type
and they are all projectively equivalent (see \cite{MPT} and
\cite[Theorem 4, Lemma 5, Lemma 7,
 Theorem 10]{LV}). Whereas, when $t>3$: (i) there exist maximum
scattered $\F_q$-linear sets of the projective space
$\Lambda=PG(2n-1,q^t)$ which are not of pseudoregulus type (see
Example \ref{example:noPseud}); (ii) $\F_q$-linear sets of
pseudoregulus type, in general, are not all projectively
equivalent (see Theorem \ref{thm:numbprojequipseudoregulus}).
\end{remark}

\medskip

 \noindent The construction presented in \cite[Section
2]{LMPT} when  $t=3$ and $n=2$, can be generalized providing a
simple way to construct scattered $\F_q$-linear sets of
pseudoregulus type of $PG(2n-1,q^t)$ for any $t,n\geq 2$.\\
\noindent Let $\Lambda=PG(V,\F_{q^t}),$ where $V=V(2n,\F_{q^t}) =
U_1 \oplus U_2$, with $dim\,U_1=dim\,U_2=n$ and let
$T_1=PG(U_1,\F_{q^t})$ and $T_2=PG(U_2,\F_{q^t})$. Now, let $\Phi_f$
be a semilinear collineation  between $T_1$ and $T_2$, induced by
the invertible semilinear map  $f : U_1 \rightarrow U_2$, having as
companion automorphism an element $\sigma\in Aut(\F_{q^t})$ such
that $Fix(\sigma)=\F_q$. Then for each $\rho \in \F_{q^t}^*$, the
set
$$W_{\rho, f} = \{\underbar{u}+\rho f(\underbar{u}) \,:\, \underbar{u}\in U_1\}$$
 is an $\F_q$--vector subspace of $V$ of dimension $tn$ and it is not difficult to see that
$L=L_{W_{\rho,f}}$ is an $\F_q$-linear set of $\Lambda$ or rank $tn$
of scattered type $^($\footnote{More generally, if $Fix
(\sigma)=\F_{q'}$, then $L_{W_{\rho,f}}$ is an $\F_{q'}$-linear set
of $\Lambda$ of scattered type.}$^)$. Also, we can see that for each
line $s_{P}$ joining the points $P=\langle \underbar u
\rangle_{q^t}$ and $P^{\Phi_f}=\langle f(\underbar u) \rangle_{q^t}$
of $T_1$ and $T_2$ respectively, we have that
\begin{equation}\label{generalizedpseudoregulus}s_{P} \cap L=\{\langle \lambda \underbar u+\lambda^\sigma f(\underbar u) \rangle_{q^t}\, :\,
\lambda \in \F_{q^t}^* \}.\end{equation}
 Hence, the line $s_{P}$, for each $P\in T_1$,
 has weight $t$ in
$L$. Also, if  $P \neq Q$, the lines $s_{P}$ and $s_{Q}$ are
disjoint. This means that  $L$ satisfies (i) of Definition
\ref{def:pseud}. Moreover, it is clear that $T_1 \cap s_P=\{P\}$ and
$T_2 \cap s_P=\{P^{\Phi_f}\}$ for each $P\in T_1$ and that $T_1\cap
L=T_2\cap L=\emptyset$. In addition, $T_1$ and $T_2$ are the only
$(n-1)$-dimensional transversal spaces of the lines $s_P$. Indeed,
if $T=PG(U,\F_{q^t})=PG(n-1,q^t)$ were another transversal space,
then $T$ would be disjoint from $T_1$ and $T_2$ and, since $T\cap
s_{P}\neq \emptyset$ for each $P\in T_1$, we have that
$$U=\{\underbar u+\lambda_{\underbar u}f(\underbar u)\,:\, \underbar
u\in U_1\},$$
 where $\lambda_{\underbar u} \in \F_{q^t} \,\,$ for each $\underbar u \in U_1$ and $\lambda_{\underbar u} \neq 0$
  for each ${\underbar u} \neq \underbar 0$. Now, since $U$ is an $\Ft$-subspace of
$V$, the map $f$ turns out to be an $\F_{q^t}$-linear map of $V$,
a contradiction. So, also $L$ satisfies  (ii) of Definition
\ref{def:pseud} and hence $L_{W_{\rho, f}}$ is a maximum scattered
$\F_q$-linear set of $\Lambda$ of  pseudoregulus type and
$\mathcal{P}_{L}=\{s_{P} \,:\, P\in T_1\}$  is its associated
pseudoregulus. Hence we have proved the following

\begin{theorem}\label{thm:algebraicpseudoregulus}
Let $T_1=PG(U_1,\F_{q^t})$ and $T_2=PG(U_2,\F_{q^t})$ be two
disjoint $(n-1)$-dimensional subspaces of
$\Lambda=PG(V,\F_{q^t})=PG(2n-1,q^t)$ ($t>1$) and let $\Phi_f$ be a semilinear collineation between $T_1$ and $T_2$  having as companion
automorphism an element $\sigma\in Aut(\F_{q^t})$ such that
$Fix(\sigma)=\F_q$. Then, for each $\rho \in \F_{q^t}^*$, the set
$$L_{\rho,f} = \{\langle \underbar{u}+\rho f(\underbar{u}) \rangle_{q^t} \,:\, \underbar{u}\in U_1\setminus\{\underbar 0\}\}$$
is an $\F_q$-linear set of $\Lambda$ of pseudoregulus type whose
associated pseudoregulus is $\mathcal{P}_{L_{\rho,f}}=\{\langle P,
P^{\Phi_f}\rangle_{q^t}\,:\, P\in T_1\}$, with transversal spaces
$T_1$ and $T_2$.

\end{theorem}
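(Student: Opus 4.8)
The plan is to verify directly the two defining conditions of pseudoregulus type for $L_{\rho,f}=L_{W_{\rho,f}}$, where $W_{\rho,f}=\{\underbar{u}+\rho f(\underbar{u}):\underbar{u}\in U_1\}$. First I would check that $W_{\rho,f}$ is an $\F_q$-subspace of $V$ of dimension $tn$: the map $\varphi\colon\underbar{u}\mapsto\underbar{u}+\rho f(\underbar{u})$ is additive, and for $\mu\in\F_q=Fix(\sigma)$ one has $\varphi(\mu\underbar{u})=\mu\underbar{u}+\rho\mu^{\sigma}f(\underbar{u})=\mu\varphi(\underbar{u})$ since $\mu^{\sigma}=\mu$, so $\varphi$ is $\F_q$-linear; it is injective because $\underbar{u}\in U_1$, $\rho f(\underbar{u})\in U_2$ and $U_1\cap U_2=\{\underbar{0}\}$. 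As $\dim_{\F_q}U_1=tn$, this gives $\dim_{\F_q}W_{\rho,f}=tn$, so $L_{\rho,f}$ has rank $tn$, which is the maximal rank $rt/2$ for $r=2n$ allowed by Theorem~\ref{Thm. BL}.

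Next I would show $L_{\rho,f}$ is scattered, hence maximum scattered. If $\lambda(\underbar{u}+\rho f(\underbar{u}))\in W_{\rho,f}$ for some $\underbar{u}\neq\underbar{0}$ and $\lambda\in\F_{q^t}$, writing the right-hand side as $\underbar{v}+\rho f(\underbar{v})$ and comparing the components in $U_1$ and $U_2$ yields $\underbar{v}=\lambda\underbar{u}$ and $\lambda f(\underbar{u})=f(\underbar{v})=\lambda^{\sigma}f(\underbar{u})$; since $f(\underbar{u})\neq\underbar{0}$ this forces $\lambda=\lambda^{\sigma}$, i.e. $\lambda\in Fix(\sigma)=\F_q$. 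Thus every point of $L_{\rho,f}$ has weight $1$ and $L_{\rho,f}$ is scattered.

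Then I would analyse, for $P=\langle\underbar{u}\rangle_{q^t}\in T_1$, the line $s_P=\langle P,P^{\Phi_f}\rangle_{q^t}=\langle\underbar{u},f(\underbar{u})\rangle_{q^t}$. A point $\langle\underbar{v}+\rho f(\underbar{v})\rangle_{q^t}$ of $L_{\rho,f}$ lies on $s_P$ precisely when $\underbar{v}\in\langle\underbar{u}\rangle_{q^t}$, and substituting $\underbar{v}=\lambda\underbar{u}$ gives exactly the points described in~(\ref{generalizedpseudoregulus}); counting these representatives modulo $\F_q^{*}$ shows $|s_P\cap L_{\rho,f}|=q^{t-1}+\cdots+q+1$, so $s_P$ has weight $t$. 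For distinct $P,Q\in T_1$ the four vectors $\underbar{u},f(\underbar{u}),\underbar{w},f(\underbar{w})$ are $\F_{q^t}$-independent (by $U_1\cap U_2=\{\underbar{0}\}$ and the injectivity of $f$), so $s_P\cap s_Q=\emptyset$; letting $P$ run over the $m=(q^{nt}-1)/(q^t-1)$ points of $T_1$ produces the pairwise disjoint lines of condition~(i) of Definition~\ref{def:pseud}. Finally $T_1\cap L_{\rho,f}=T_2\cap L_{\rho,f}=\emptyset$, because each vector of $W_{\rho,f}\setminus\{\underbar{0}\}$ has nonzero component in both $U_1$ and $U_2$, while $T_1\cap s_P=\{P\}$ and $T_2\cap s_P=\{P^{\Phi_f}\}$; hence $T_1$ and $T_2$ are transversal spaces.

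The step I expect to be the main obstacle is to prove that $T_1$ and $T_2$ are the \emph{only} transversal spaces, which is what condition~(ii) requires. Let $T=PG(U,\F_{q^t})$ be a transversal space. Since the lines $s_P$ are pairwise disjoint and $T$ has exactly $m$ points, a counting argument shows that $T$ meets each $s_P$ in a single point, so $P\mapsto T\cap s_P$ is a bijection between $T_1$ and $T$. If $T\cap T_2=\emptyset$, then $\pi_1|_U\colon U\to U_1$ (projection along $U_2$) is an $\F_{q^t}$-isomorphism, so $U=\{\underbar{u}+g(\underbar{u}):\underbar{u}\in U_1\}$ for the $\F_{q^t}$-linear map $g=\pi_2\circ(\pi_1|_U)^{-1}$; the condition $T\cap s_{\langle\underbar{u}\rangle}\neq\emptyset$ then forces $g(\underbar{u})=\lambda_{\underbar{u}}f(\underbar{u})$ for scalars $\lambda_{\underbar{u}}$. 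Comparing $g(\underbar{u}+\underbar{w})=g(\underbar{u})+g(\underbar{w})$ with the $\F_{q^t}$-independence of $f(\underbar{u}),f(\underbar{w})$ shows that $\lambda_{\underbar{u}}=\lambda$ is constant, whence $g=\lambda f$. But $g$ is $\F_{q^t}$-linear while $\lambda f$ is $\sigma$-semilinear with $\sigma\neq\mathrm{id}$ (as $Fix(\sigma)=\F_q\subsetneq\F_{q^t}$), so $\lambda=0$ and $T=T_1$. Applying the symmetric argument to $f^{-1}$ when $T\cap T_1=\emptyset$ gives $T=T_2$, and the remaining position (where $T$ meets both $T_1$ and $T_2$) is excluded by the same linear-versus-semilinear incompatibility, now read off the restriction of the correspondence to $\pi_1(U)$. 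Hence exactly two transversal spaces exist, condition~(ii) holds, and by Definition~\ref{def:pseud} (together with the uniqueness in Corollary~\ref{cor:unicPseudo}) $L_{\rho,f}$ is of pseudoregulus type with associated pseudoregulus $\{\langle P,P^{\Phi_f}\rangle_{q^t}:P\in T_1\}$ and transversal spaces $T_1$ and $T_2$.
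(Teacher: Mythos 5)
Your overall plan coincides with the paper's proof: construct $W_{\rho,f}$, check rank and scatteredness, compute $s_P\cap L_{\rho,f}$ to get the weight-$t$ pairwise disjoint lines, verify that $T_1,T_2$ are transversal spaces, and then prove uniqueness of the transversal spaces by writing a putative third transversal $T=PG(U,\F_{q^t})$ as a graph over $U_1$ and playing the $\F_{q^t}$-linearity of that graph map against the strict semilinearity of $f$. All of these steps are correct as you present them (your constancy of $\lambda_{\underbar{u}}$ should be extended to proportional vectors by routing through an independent one, which is possible since $n\geq 2$ -- a minor point), and in the two cases $T\cap T_2=\emptyset$ and $T\cap T_1=\emptyset$ your argument is exactly the paper's; in fact the paper only writes that case, asserting without justification that a third transversal space ``would be disjoint from $T_1$ and $T_2$''.

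The genuine gap is your dismissal of the mixed case, where $T$ meets both $T_1$ and $T_2$: it is \emph{not} ``excluded by the same linear-versus-semilinear incompatibility, now read off the restriction of the correspondence to $\pi_1(U)$''. If $T\cap T_2\neq\emptyset$, then $U\cap U_2\neq\{\underbar{0}\}$ is precisely the kernel of $\pi_1|_U$, so there is no well-defined map from $\pi_1(U)$ to $U_2$ whose linearity you could contradict: the ``correspondence'' is multivalued, its fibres being cosets of $U\cap U_2$. This case must be killed by a different (and more elementary) argument. Suppose $\langle\underbar{a}\rangle_{q^t}\in T\cap T_1$ and $\langle\underbar{b}\rangle_{q^t}\in T\cap T_2$, with $\underbar{a}\in U_1$, $\underbar{b}\in U_2$. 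Since $T$ is a subspace it contains the whole line joining these two points, and by your counting argument every point of $T$ lies on some line $s_Q$ of the pseudoregulus. But a point $\langle\underbar{a}+\mu\underbar{b}\rangle_{q^t}$ with $\mu\neq 0$ has nonzero components in both $U_1$ and $U_2$, and such a point lies on $s_Q$, $Q=\langle\underbar{u}\rangle_{q^t}$, only if $\underbar{a}\in\langle\underbar{u}\rangle_{q^t}$ and $\mu\underbar{b}\in\langle f(\underbar{u})\rangle_{q^t}$, i.e.\ only if $Q=\langle\underbar{a}\rangle_{q^t}$ and $\underbar{b}\in\langle f(\underbar{a})\rangle_{q^t}$. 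Hence either some point of the line $\langle\underbar{a}\rangle_{q^t}\,\langle\underbar{b}\rangle_{q^t}$ lies on no line of the pseudoregulus, contradicting the counting, or $\underbar{b}\in\langle f(\underbar{a})\rangle_{q^t}$, in which case that line is $s_{\langle\underbar{a}\rangle_{q^t}}$ itself, which has weight $t$ in $L_{\rho,f}$ and so cannot lie in a transversal space, since Definition \ref{def:pseud}(ii) requires $T$ to be disjoint from $L_{\rho,f}$. With this substitution for your third case, the proof is complete, and is then actually more detailed than the one given in the paper.
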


\begin{remark}\rm
Note that, with the notation of the previous theorem, if $L_{\rho,f}
\cap L_{\rho',f} \neq \emptyset$, then $L_{\rho,f}=L_{\rho',f}$ and
this happens if and only if $N_{q^t/q}(\rho)= N_{q^t/q}(\rho')$
$^(\footnote{Here $N_{q^t/q}(\cdot)$ denotes the norm function from
$\F_{q^t}$ on $\F_q$.}^)$. Hence $T_1$, $T_2$ and the collineation
$\Phi_f$ define a set of $q-1$ mutually disjoint linear sets of
pseudoregulus type admitting the same associated pseudoregulus
$\mathcal{P}$  and covering, together with the transversal spaces
$T_1$ and $T_2$, the point set of $\mathcal{P}$.
\end{remark}

\bigskip

Up to projective equivalence, the scattered $\F_q$-linear sets
$L_{\rho,f}$ only depend on the field automorphism associated with
$f$. Indeed, we have

\begin{theorem} \label{theorem:ProjEquiv}
The $\F_q$-linear sets of $\Lambda=PG(2n-1,q^t)$ ($n\geq 2, t\geq
2$) $L_{\rho,f}$ and $L_{\rho',g}$ are $P\Gamma L$-equivalent if
and only if $\sigma_f=\sigma_g^{\pm 1}$ where $\sigma_f$ and
$\sigma_g$ are the automorphisms associated with $f$ and $g$,
respectively.
\end{theorem}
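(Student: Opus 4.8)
The plan is to treat $t\ge 3$ and $t=2$ separately, the latter being essentially trivial. By Remark \ref{rema:Subgeometry}, when $t=2$ every $L_{\rho,f}$ is a Baer subgeometry, so all such linear sets are projectively equivalent, while the only automorphism of $\F_{q^2}$ fixing $\F_q$ is $x\mapsto x^q$, which is its own inverse; hence $\sigma_f=\sigma_g=\sigma_g^{\pm1}$ holds automatically and both sides of the asserted equivalence are always true. So I assume $t\ge 3$, where Corollary \ref{cor:unicPseudo} applies: the pseudoregulus and the (unordered) pair of transversal spaces are projective invariants of the linear set, and since $L_{\rho,f}$ and $L_{\rho',g}$ share the same $T_1=PG(U_1,\F_{q^t})$ and $T_2=PG(U_2,\F_{q^t})$, any collineation taking one onto the other permutes $\{T_1,T_2\}$.

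For the ``if'' direction I first dispose of the case $\sigma_f=\sigma_g=:\sigma$ by an explicit linear map. Since $f,g\colon U_1\to U_2$ both have companion $\sigma$, the composite $gf^{-1}\colon U_2\to U_2$ is $\F_{q^t}$-linear; define $\psi=\mathrm{id}_{U_1}\oplus\frac{\rho'}{\rho}\,gf^{-1}$ on $V=U_1\oplus U_2$. Both summand maps have companion $\mathrm{id}$, so $\psi$ is $\F_{q^t}$-linear, and a direct check gives $\psi(\underbar{u}+\rho f(\underbar{u}))=\underbar{u}+\rho' g(\underbar{u})$, so $\psi$ carries $W_{\rho,f}$ bijectively onto $W_{\rho',g}$ and hence $L_{\rho,f}$ onto $L_{\rho',g}$. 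For the case $\sigma_f=\sigma_g^{-1}$ I use the \emph{reversal} identity obtained by writing a general vector of $W_{\rho,f}$ in terms of $\underbar{w}=\rho f(\underbar{u})\in U_2$: since $f^{-1}$ has companion $\sigma_f^{-1}$, one finds $W_{\rho,f}=\{\underbar{w}+\rho^{-\sigma_f^{-1}}f^{-1}(\underbar{w}):\underbar{w}\in U_2\}$, so $L_{\rho,f}$ is itself a pseudoregulus-type linear set built from $f^{-1}\colon U_2\to U_1$ (companion $\sigma_f^{-1}$) with the roles of $T_1,T_2$ interchanged. Conjugating by any fixed $\F_{q^t}$-linear swap of $U_1$ and $U_2$ turns this into a linear set of the original shape whose companion automorphism is $\sigma_f^{-1}=\sigma_g$, and the equal-companion case just proved concludes the argument.

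For the ``only if'' direction, let $\phi$ be a collineation with $\phi(L_{\rho,f})=L_{\rho',g}$, induced by a semilinear map with companion $\theta\in\mathrm{Aut}(\F_{q^t})$. By Corollary \ref{cor:unicPseudo}, $\phi$ maps the pseudoregulus of $L_{\rho,f}$ onto that of $L_{\rho',g}$ and permutes $\{T_1,T_2\}$. If $\phi$ fixes $T_1$ and $T_2$, then chasing where the line $s_P$ of $\mathcal P_{L_{\rho,f}}$ is sent (its image is the pseudoregulus line of $L_{\rho',g}$ through $\phi(P)$, which meets $T_2$ in $\phi(P)^{\Phi_g}$) yields $\phi|_{T_2}\circ\Phi_f=\Phi_g\circ\phi|_{T_1}$ as collineations $T_1\to T_2$. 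Reading off companion automorphisms and using that $\mathrm{Aut}(\F_{q^t})$ is abelian gives $\theta\sigma_f=\sigma_g\theta$, whence $\sigma_f=\sigma_g$. If instead $\phi$ swaps $T_1$ and $T_2$, the same line-chasing yields $\phi|_{T_1}=\Phi_g\circ\phi|_{T_2}\circ\Phi_f$, and comparing companions gives $\theta=\theta\sigma_f\sigma_g$, i.e. $\sigma_g=\sigma_f^{-1}$. In either case $\sigma_f=\sigma_g^{\pm1}$.

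The step I expect to be delicate is the companion-automorphism bookkeeping when $n=2$, where $T_1,T_2$ are projective lines and the Fundamental Theorem of Projective Geometry no longer forces an abstract point-collineation to be semilinear. The resolution is that $\Phi_f$, $\Phi_g$ and the restrictions of $\phi$ are already known to be semilinear, and $P\Gamma L(2,q^t)$ acts faithfully on the points of $PG(1,q^t)$; hence an equality of such collineations as point maps is an equality in $P\Gamma L(2,q^t)$ and therefore determines the companion automorphism, so the relations $\theta\sigma_f=\sigma_g\theta$ and $\theta=\theta\sigma_f\sigma_g$ remain valid in the line case as well. The remaining verifications—that $\psi$ and the swap are genuinely $\F_{q^t}$-semilinear on all of $V$ (matching companion on both summands) and bijective—are routine.
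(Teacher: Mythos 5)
Your proof is correct, and while your ``if'' direction is essentially the paper's (after normalizing $\rho=\rho'=1$, the paper uses the explicit maps $F(\underbar{u}_1+\underbar{u}_2)=\underbar{u}_1+g(f^{-1}(\underbar{u}_2))$ and $F(\underbar{u}_1+\underbar{u}_2)=g(\underbar{u}_1)+f^{-1}(\underbar{u}_2)$; your $\psi$ and your reversal-plus-swap are the same constructions carried out with the $\rho$'s kept in place), your ``only if'' direction takes a genuinely different route. The paper works at the vector level: from $\phi_F(L_{1,f})=L_{1,g}$ it writes $F(\underbar{u}+f(\underbar{u}))=\lambda_{\underbar{u}}(\underbar{u}'+g(\underbar{u}'))$, splits along $V=U_1\oplus U_2$ to get $F(f(\underbar{u}))=\lambda_{\underbar{u}}^{1-\sigma_g}g(F(\underbar{u}))$, then spends the bulk of the proof showing that all ratios $\lambda_{\underbar{u}}/\lambda_{\underbar{v}}$ lie in $\F_q$ (with separate arguments for $\F_{q^t}$-independent and $\F_{q^t}$-dependent pairs), and finally compares $F(f(\alpha\underbar{u}))$ computed in two ways to obtain $\alpha^{\sigma_f\tau}=\alpha^{\tau\sigma_g}$ for all $\alpha\in\F_{q^t}$. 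You instead use Corollary \ref{cor:unicPseudo} at full strength --- the pseudoregulus itself, not merely the transversal pair, is a projective invariant --- to derive the intertwining relation $\phi|_{T_2}\circ\Phi_f=\Phi_g\circ\phi|_{T_1}$ (or its swapped analogue) as an identity of collineations $T_1\to T_2$, and then conclude by comparing companion automorphisms in the abelian group $\mathrm{Aut}(\F_{q^t})$. In effect you projectivize the paper's computation: the scalars $\lambda_{\underbar{u}}$, whose ratios the paper must control by hand, are exactly the scalar ambiguity that disappears when one works in $P\Gamma L$, at the cost of invoking the fact --- standard, and correctly justified by you even in the delicate case $n=2$, where $T_1$ and $T_2$ are lines --- that a semilinear map inducing the identity collineation on a space of vector dimension at least $2$ is scalar, so that companions of such collineations are well defined and multiplicative. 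Your version is shorter and more conceptual; what the paper's version buys is self-containedness at the level of explicit vectors, needing neither pseudoregulus-preservation under $\phi_F$ nor the well-definedness of companion automorphisms.
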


\begin{proof} If $t=2$, the assertion follows from Remark \ref{rema:Subgeometry}. Let $t>2$. Then, by Corollary \ref{cor:unicPseudo} the transversal spaces associated with $L_{\rho,f}$ and
$L_{\rho',g}$ are uniquely determined. Hence, up to the action of
$PGL(2n,q^t)$, we may assume that the transversal spaces of
$L_{\rho,f}$ and $L_{\rho',g}$ are the same. Also, since
$L_{\rho,f}=L_{1,\rho^{-1} f}$, we may consider scattered
$\F_q$-linear sets of the form $L_{1,f}$. Suppose that $L_{1,f}$ and
$L_{1,g}$ are projectively equivalent; i.e., there exists a
collineation $\phi_F$ of $\Lambda = PG(2n-1,q^t)=PG(V,\F_{q^t})$
defined by an invertible semilinear map $F$ of the vector space $V$
having companion automorphism $\tau$, such that
$\phi_F(L_{1,f})=L_{1,g}$. By Corollary \ref{cor:unicPseudo}
$\phi_F(\{T_1,T_2\})=\{T_1,T_2\}$. Precisely, either
$\phi_F(T_i)=T_i,$ $i=1,2$ or $\phi_F(T_i)=T_j,$ $\{i,j\}=\{1,2\}$.

 In the first case, we have that $F(U_1)=U_1$
and $F(U_2)=U_2$. Since $\phi_F(L_{1,f})=L_{1,g}$, for each
$\underbar{u} \in U_1$ we have
 $\phi_F(\langle \underbar{u}+f(\underbar{u}) \rangle_{q^t})=\langle F(\underbar{u}+f(\underbar{u})) \rangle_{q^t} \in L_{1,g};$ in other words, for each vector $\underbar{u} \in U_1$ we have
  \begin{equation}\label{eq:colloineationdescribed}F(\underbar{u}+f(\underbar{u}))=\lambda_{\underbar{u}}(\underbar{u}'+g(\underbar{u}'))=\lambda_{\underbar{u}}\underbar{u}' + \lambda_{\underbar{u}}g(\underbar{u}'),\end{equation} where
  $\underbar{u}' \in U_1$ and $\lambda_{\underbar{u}}\in \F_{q^t}^*$ if $\underbar{u} \neq \underbar{0}$. On the
other hand, we also have $F(\underbar{u}+f(\underbar{u})) =
F(\underbar{u})+F(f(\underbar{u})),$
   with $F(\underbar{u}) \in U_1$ and $F(f(\underbar{u})) \in U_2$. Taking this fact into account,
since $V=U_1 \oplus U_2$, Equation (\ref{eq:colloineationdescribed})
implies that $F(\underbar{u})=\lambda_{\underbar{u}}\underbar{u}'$
and $F(f(\underbar{u}))=\lambda_{\underbar{u}}g(\underbar{u}')$.
Hence
\begin{equation}\label{eq:colloineationdescribed2}
F(f(\underbar{u}))=\lambda_{\underbar{u}}g\Big(\frac{F(\underbar{u})}{\lambda_{\underbar{u}}}\Big)=\frac{\lambda_{\underbar{u}}}{\lambda_{\underbar{u}}^{\sigma_g}}g(F(\underbar{u})).
\end{equation}

\noindent Let now $\underbar{u}$ and $\underbar{v}$ be two nonzero
vectors of $U_1$. If $\underbar{u}$ and $\underbar{v}$ are
$\F_{q^t}$-independent, by Equation
(\ref{eq:colloineationdescribed2}) we have
$$F(f(\underbar{u} +\underbar{v}
))=\lambda_{\underbar{u}+\underbar{v}}^{1-\sigma_g}g(F(\underbar{u}))+\lambda_{\underbar{u}+\underbar{v}}^{1-\sigma_g}g(F(\underbar{v})).$$
Also, we get
$$F(f(\underbar{u} +\underbar{v}
))=F(f(\underbar{u}))+F(f(\underbar{v}))=\lambda_{\underbar{u}}^{1-\sigma_g}g(F(\underbar{u}))+\lambda_{\underbar{v}}^{1-\sigma_g}g(F(\underbar{v})).$$
Hence
$\lambda_{\underbar{u}}^{1-\sigma_g}=\lambda_{\underbar{u}+\underbar{v}}^{1-\sigma_g}=\lambda_{\underbar{v}}^{1-\sigma_g}$,
which implies $\lambda_{\underbar{u}}/\lambda_{\underbar{v}}\in
\F_q$ since $Fix(\sigma_g)=\F_q$.

On the other hand, if $\underbar{u}$ and $\underbar{v}$ are
$\F_{q^t}$--dependent, choosing a vector $\underbar{w}\in U_1$, such
that $\underbar{w}\not\in \langle \underbar{u} \rangle_{q^t}$, and
arguing as above we get
$\lambda_{\underbar{u}}/\lambda_{\underbar{v}}\in\F_q$.

\noindent This means that for each $\underbar{u},\underbar{v}\in
U_1$ there exists an element
$\beta_{\underbar{u},\underbar{v}}\in\F_q$ such that
$\lambda_{\underbar{u}}=\beta_{\underbar{u},\underbar{v}}\lambda_{\underbar{v}}$.

\bigskip

\noindent Let $\underbar{u}\in U_1$, $\underbar{u}\neq
\underbar{0}$. Then, by Equation (\ref{eq:colloineationdescribed2}),
we get
\begin{equation}\label{eq:colloineationdescribed3} F(f(\alpha
\underbar{u})) =\alpha^{\sigma_f
\tau}\lambda_{\underbar{u}}^{1-\sigma_g}g(F(\underbar{u}))
\end{equation}for each $\alpha \in\Ft$.
 On the other hand, by Equation (\ref{eq:colloineationdescribed2}) again, for each  $\alpha \in \F_{q^t}$ we have
  \begin{equation}\label{eq:colloineationdescribed4}F(f(\alpha \underbar{u}))=\lambda_{\alpha\underbar{u}}^{1-\sigma_g}\alpha^{\tau \sigma_g}g(F(\underbar{u})),\end{equation}
where $\lambda_{\alpha\underbar{u}}\in \Ft$. Since
$\lambda_{\underbar{u}}/\lambda_{\alpha\underbar{u}}\in \F_q$, we
have
$\lambda_{\alpha\underbar{u}}^{1-\sigma_g}=\lambda_{\underbar{u}}^{1-\sigma_g}$.
Taking into account this fact, by Equations
(\ref{eq:colloineationdescribed3}) and
(\ref{eq:colloineationdescribed4}) we get
$\alpha^{\sigma_{f}\tau}=\alpha^{\tau \sigma_g}$ for each
$\alpha\in\Ft$, which implies $\sigma_f = \sigma_g$.

\noindent In the second case we have $F(U_1)=U_2$ and $F(U_2)=U_1$
and arguing as in the previous case we get $\sigma_g
=\sigma_f^{-1}$.

\medskip

Conversely, suppose that $\sigma_f = \sigma_g$ and let $\phi_F$ be
the collineation of $\Lambda$ defined by the  map $F$ of the
vector space $V=U_1 \oplus U_2$
 defined as follows $$F(\underbar{u}_1 + \underbar{u}_2)= \underbar{u}_1 +g(f^{-1}(\underbar{u}_2)),$$
 where $\underbar{u}_1 \in U_1$ and $\underbar{u}_2 \in U_2$. Then $\phi_F(L_{1,f})=L_{1,g}$. On the other hand, if $\sigma_f = \sigma_g^{-1}$, the collineation $\phi_F$ of $\Lambda$
 defined by the following map $F$ of $V=U_1 \oplus U_2$
$$F(\underbar{u}_1 + \underbar{u}_2)= g(\underbar{u}_1) + f^{-1}(\underbar{u}_2)$$
 sends $L_{1,f}$ to $L_{1,g}$. This concludes the proof.
\end{proof}

\noindent As a consequence of Theorem \ref{theorem:ProjEquiv} we
have the following

\begin{cor} \label{cor:ProjEquiva}
In the projective space $\Lambda=PG(2n-1,q^t)$ ($n\geq 2$, $t\geq
3$) there are $\varphi(t)/2$  orbits of scattered $\F_q$-linear
sets of $\Lambda$ of rank $tn$ of type $L_{\rho,f}$ under the
action of the collineation group of $\Lambda$.
\end{cor}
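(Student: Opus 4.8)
The plan is to count the orbits of the linear sets $L_{\rho,f}$ under the collineation group $P\Gamma L(2n,q^t)$ by combining Theorem \ref{theorem:ProjEquiv} with a counting argument on the automorphisms of $\F_{q^t}$. Theorem \ref{theorem:ProjEquiv} tells us that two such linear sets $L_{\rho,f}$ and $L_{\rho',g}$ lie in the same orbit precisely when $\sigma_f = \sigma_g^{\pm 1}$, so the orbits are in bijection with the equivalence classes of admissible automorphisms under the relation $\sigma \sim \sigma^{-1}$. Hence the whole problem reduces to enumerating these classes.

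First I would identify which automorphisms $\sigma \in \mathrm{Aut}(\F_{q^t})$ are admissible, i.e. which can arise as the companion automorphism of a map $f$ defining a scattered linear set of pseudoregulus type. The construction preceding Theorem \ref{thm:algebraicpseudoregulus} requires exactly that $Fix(\sigma) = \F_q$, which for $\sigma$ a power of the Frobenius $x \mapsto x^q$, say $\sigma : x \mapsto x^{q^i}$ with $0 \le i \le t-1$, means $\gcd(i,t) = 1$. The number of such $i$ in $\{0,1,\dots,t-1\}$ is precisely $\varphi(t)$ by definition of the Euler phi function, and since $t \ge 3$ forces $i \neq 0$, every admissible $\sigma$ is a nontrivial generator of $Gal(\F_{q^t}/\F_q)$.

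Next I would pass to the quotient by the relation $\sigma \sim \sigma^{-1}$. Writing $\sigma : x \mapsto x^{q^i}$, its inverse is $x \mapsto x^{q^j}$ where $j \equiv -i \pmod{t}$; thus the involution on the $\varphi(t)$ admissible exponents is $i \mapsto t-i$. I would then argue that this involution is fixed-point-free: a fixed point would require $i \equiv -i \pmod t$, i.e. $2i \equiv 0 \pmod t$, which together with $\gcd(i,t)=1$ forces $t \mid 2$, contradicting $t \ge 3$. Therefore the $\varphi(t)$ admissible exponents split into exactly $\varphi(t)/2$ pairs $\{i, t-i\}$, and each pair corresponds to one orbit of linear sets by Theorem \ref{theorem:ProjEquiv}.

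The main obstacle, though largely dispatched by the previous theorem, is ensuring that each admissible automorphism is actually \emph{realized} by some $f$ and $\rho$ so that the count is neither an over- nor an under-estimate; this amounts to checking that for every $\sigma$ with $Fix(\sigma)=\F_q$ one can exhibit a semilinear collineation $\Phi_f$ between $T_1$ and $T_2$ with companion automorphism $\sigma$, which is immediate since one may take $f$ to be any semilinear extension of $\sigma$ componentwise (for instance $f$ acting as $\sigma$ on coordinates with respect to fixed bases of $U_1$ and $U_2$). With realizability secured and the fixed-point-free involution established, the orbit count equals $\varphi(t)/2$, completing the proof.
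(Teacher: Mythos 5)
Your proof is correct and follows essentially the same route as the paper: both reduce the orbit count, via Theorem \ref{theorem:ProjEquiv}, to counting the $\varphi(t)$ automorphisms $\sigma$ of $\F_{q^t}$ with $Fix(\sigma)=\F_q$ (the generators of $Gal(\F_{q^t}:\F_q)$) up to the identification $\sigma\sim\sigma^{-1}$. Your explicit verifications that the involution $\sigma\mapsto\sigma^{-1}$ is fixed-point-free when $t\geq 3$ and that every admissible $\sigma$ is actually realized by some $f$ are details the paper leaves implicit, and they only make the count more airtight.
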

\begin{proof}
By the previous theorem, two linear sets $L_{\rho,f}$ and
$L_{\rho,g}$ are $P\Gamma L$-equivalent if and only if either
$\sigma_f = \sigma_g$ or $\sigma_f = \sigma_g^{-1}$. So the number
of orbits of such $\F_q$-linear sets under the action of $P\Gamma
L(2n,q^t)$ is $\chi/2$ where $\chi$ is the number of
$\F_q$-automorphisms $\sigma$ of $\Ft$ with $Fix(\sigma)=\F_q$. This
means that $\chi$ is the number of generators of the group
$Gal(\Ft:\F_q)$, i.e. $\chi=\varphi(t)$ is the number of positive
integers less than $t$ and coprime with $t$.
\end{proof}

\bigskip

In what follows we will show that each scattered $\F_q$-linear set
of pseudoregulus type can be obtained as in Theorem
\ref{thm:algebraicpseudoregulus}. Let's start by proving

\begin{theorem} \label{theorem:PSeuprojDesSpread}
Let $\Sigma \simeq PG(tn-1,q)$ be a subgeometry of
$\Sigma^*=PG(V,\F_{q^t})=PG(tn-1,q^t)$ defined by the  semilinear
collineation $\Psi$   of order $t$ of $\Sigma^*$. Also, let $\cD$ be
a Desarguesian $(t-1)$-spread of $\Sigma$ and denote by $\Theta$ a
director subspace of $\cD$. Then for each pair of integers $i_1,i_2
\in \{0,1,...,t-1\}$ such that $gcd(i_2-i_1,t)=1$, the linear set
obtained projecting $\Sigma$ from the subspace $\Gamma = \langle
\Theta^{\Psi^{i}} \,:\, i\neq i_1, i_2\rangle_{q^t}$ to $\Lambda =
\langle \Theta^{\Psi^{i_1}}, \Theta^{\Psi^{i_2}}\rangle_{q^t} $ is a
scattered $\F_q$-linear set of type $L_{\rho,f}$ described in
Theorem \ref{thm:algebraicpseudoregulus}.
\end{theorem}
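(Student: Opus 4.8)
The plan is to choose coordinates adapted to the director spaces, compute the projection $p_{\Gamma,\Lambda}$ component-wise, and then recognize the image as a linear set $L_{1,f}$ to which Theorem \ref{thm:algebraicpseudoregulus} applies. First I would fix an $\F_{q^t}$-basis $e_{1,0},\dots,e_{n,0}$ of the vector space underlying $\Theta$ and propagate it by the semilinear map $\psi$ inducing $\Psi$, setting $e_{j,i}=\psi^i(e_{j,0})$. Since $\Theta,\Theta^\Psi,\dots,\Theta^{\Psi^{t-1}}$ span $\Sigma^*$ and have complementary dimensions, the vectors $e_{j,i}$ form a basis of $V$ with $\psi(e_{j,i})=e_{j,i+1}$ (indices mod $t$), once $\psi$ is normalized so that $\psi^t=\mathrm{id}$. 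This normalization is available: because $\Sigma=Fix(\Psi)$, the multiplier of any $\psi$-fixed point has norm $1$, so Hilbert~90 lets one rescale each fixed point to a genuinely $\psi$-fixed vector. Writing $\psi$ as $\sigma$-semilinear with $\sigma:x\mapsto x^q$, a direct computation then shows that the points of $\Sigma=Fix(\Psi)$ are exactly those of the form $\langle \sum_{j,i} a_j^{q^i} e_{j,i}\rangle_{q^t}$ with $(a_1,\dots,a_n)\in\F_{q^t}^n\setminus\{0\}$, two parameter tuples yielding the same point precisely when they differ by a scalar in $\F_q^*$.

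Next I would identify the projection itself. By construction $\Gamma$ and $\Lambda$ are complementary subspaces of $\Sigma^*$, and one checks at once from the coordinate description that $\Sigma\cap\Gamma=\emptyset$, so that $p_{\Gamma,\Lambda}(\langle v\rangle_{q^t})$ is simply the $\Lambda$-component of $v$. For a point of $\Sigma$ as above this component is $\sum_j a_j^{q^{i_1}} e_{j,i_1}+\sum_j a_j^{q^{i_2}} e_{j,i_2}$. Writing $U_1$ and $U_2$ for the $\F_{q^t}$-spans of the $e_{j,i_1}$ and the $e_{j,i_2}$, so that $T_1=PG(U_1,\F_{q^t})$, $T_2=PG(U_2,\F_{q^t})$ and $\Lambda=PG(U_1\oplus U_2,\F_{q^t})$, and substituting $b_j=a_j^{q^{i_1}}$, the first summand becomes an arbitrary vector $u=\sum_j b_j e_{j,i_1}\in U_1$ while the second becomes $\sum_j b_j^{q^{i_2-i_1}} e_{j,i_2}$.

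Defining $f:U_1\to U_2$ by $f(\sum_j b_j e_{j,i_1})=\sum_j b_j^{q^{i_2-i_1}} e_{j,i_2}$ gives an invertible semilinear bijection with companion automorphism $\sigma_f:x\mapsto x^{q^{i_2-i_1}}$, and the preceding computation reads $p_{\Gamma,\Lambda}(\Sigma)=\{\langle u+f(u)\rangle_{q^t}: u\in U_1\setminus\{0\}\}=L_{1,f}$. The hypothesis $\gcd(i_2-i_1,t)=1$ now guarantees that $\sigma_f$ generates $Gal(\F_{q^t}:\F_q)$, hence $Fix(\sigma_f)=\F_q$, so that $f$ and $T_1,T_2$ satisfy the hypotheses of Theorem \ref{thm:algebraicpseudoregulus}; that theorem then yields that $L_{1,f}$ is a scattered $\F_q$-linear set of pseudoregulus type of the asserted form $L_{\rho,f}$.

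I expect the only delicate part to be Step~1: the bookkeeping of Frobenius exponents, the normalization $\psi^t=\mathrm{id}$, and the resulting parametrization of $Fix(\Psi)$, which must be set up carefully so that the $\Lambda$-component in Step~2 comes out as a clean Frobenius twist. Once these coordinates are in place, the identification of the projection with $L_{1,f}$ and the verification that $Fix(\sigma_f)=\F_q$ from the coprimality condition are immediate, and no further estimates are required.
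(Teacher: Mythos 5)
Your proposal is correct and takes essentially the same route as the paper's proof: the paper chooses the semilinear map $g$ inducing $\Psi$ normalized to have order $t$, writes $\Sigma=\{\langle u+g(u)+\cdots+g^{t-1}(u)\rangle_{q^t} : u\in U\setminus\{\underbar 0\}\}$, notes that projecting from $\Gamma$ kills all components except those in $g^{i_1}(U)$ and $g^{i_2}(U)$, and identifies the image as $L_{1,f}$ with $f$ the restriction of $g^{i_2-i_1}$ to $g^{i_1}(U)$, whose companion automorphism $\sigma^{i_2-i_1}$ has fixed field $\F_q$ precisely because $\gcd(i_2-i_1,t)=1$ --- your basis $e_{j,i}$ and Frobenius-exponent bookkeeping is exactly this argument written in coordinates. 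The one caveat is your justification of the normalization $\psi^t=\mathrm{id}$: the multiplier of a $\psi$-fixed point has norm $1$ only \emph{after} $\psi$ has been suitably rescaled (one first uses surjectivity of $N_{q^t/q}$, or rescales $\psi$ so that one fixed vector is genuinely fixed, and only then applies Hilbert 90 to the remaining fixed points), but this is a standard fact that the paper itself asserts without proof, so it does not affect the substance of your argument.
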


\begin{proof}
Since $Fix (\Psi)=\Sigma$, the collineation $\Psi$ is induced by
an invertible semilinear map $g: V \longrightarrow V$ of
order $t>1$, with companion automorphism $\sigma$ such that
$Fix(\sigma)=\F_q$. Since $\Theta=PG(U,\F_{q^t})$ is  a director
subspace of the Desarguesian spread $\cD$, we have that
$\cD=\cD(\Theta)$ and $\Sigma^*=\langle
\Theta,\Theta^{\Psi},...,\Theta^{\Psi^{t-1}}\rangle_{q^t}$. Let
$i_1,i_2 \in \{0,1,...,t-1\}$ such that $gcd(i_2-i_1,t)=1$ and let
$\Gamma = \langle \Theta^{\Psi^{i}} \,:\, i\neq i_1,
i_2\rangle_{q^t}$ and $\Lambda = \langle \Theta^{\Psi^{i_1}},
\Theta^{\Psi^{i_2}}\rangle_{q^t} $. Then $dim \,\Gamma=n(t-2)-1$,
$dim\, \Lambda=2n-1$, $\Gamma \cap \Lambda =\Gamma\cap
\Sigma=\emptyset$ and hence we can project the subgeometry $\Sigma$
from the center $\Gamma$ to the axis $\Lambda$. By Theorem
\ref{Theor LP1}, the projection $L=p_{\Gamma}(\Sigma)$ is an
$\F_q$-linear set of $\Lambda$ of rank $tn$ and $\langle L
\rangle_{q^t} =\Lambda$.  Also, it is easy to see that
$$\Sigma=Fix(\Psi)=\{\langle   \underbar u +g(\underbar u)+g^2(\underbar u)+\dots+g^{t-1}(\underbar u) \rangle_{q^t} \, :\, \underbar u\in
U\setminus \{\underbar 0\}\},$$  and hence the projection of
$\Sigma$ from $\Gamma$ into $\Lambda$ is
$$L=p_{\Gamma}(\Sigma)=\{\langle g^{i_1}(\underbar u)+g^{i_2}(\underbar u)\rangle_{q^t} \, :\, \underbar u\in
U\setminus \{\underbar 0\}\}=  \{\langle \underbar v+f(\underbar
v)\rangle_{q^t} \, :\, \underbar v\in g^{i_1}(U)\setminus\{\underbar
0\}\},$$

\noindent where $f: \underbar v \in  g^{i_1}(U) \, \mapsto
g^{i_2-i_1}(\underbar v) \in  g^{i_2}(U)$. Since $f$ is an
invertible semilinear map whose companion authomorphism is
$\sigma^{i_2-i_1}$ and  $gcd(i_2-i_1,t)=1$, we have that
$Fix(\sigma^{i_2-i_1})=\F_q$. So, by Theorem
\ref{thm:algebraicpseudoregulus}, $L$ is a scattered  $\F_q$-linear
set of $\Lambda$ of pseudoregulus type with $\Theta^{\Psi^{i_1}}$
and $\Theta^{\Psi^{i_2}}$ as transversal spaces.
\end{proof}

\begin{remark}\rm \label{rema:gcd}
Note that, if $gcd(i_2-i_1,t)=s$, in the previous proof we have
$Fix(\sigma^{i_2-i_1})=\F_{q^s}$, and hence the linear set $L$
obtained projecting $\Sigma$ from $\Gamma = \langle
\Theta^{\Psi^{i}} \,:\, i\neq i_1, i_2\rangle_{q^t}$, is an
$\F_{q^s}$-linear set.
\end{remark}

\bigskip

\noindent Recall that, by Theorem \ref{Theor LP1}, every
$\F_q$-linear set $L$ of $\Lambda$ spanning the whole space  can be
obtained projecting a suitable subgeometry. If $L$ is of pseudoregulus type we can prove the
following

\begin{theorem}\label{theorem:DesargProj}
Put $\Lambda=PG(2n-1,q^t)$, $\Sigma^* = PG(tn-1,q^t)$ and $\Sigma
=Fix(\Psi)\simeq PG(tn-1,q)$ where $\Psi$ is a semilinear
collineation of $\Sigma^*$ of order $t$. Let $L$ be a scattered
$\F_q$--linear set of $\Lambda$ of pseudoregulus type with
associated pseudoregulus $\mathcal P$ obtained by projecting
$\Sigma$ into $\Lambda$ from an $(n(t-2)-1)$--dimensional subspace
$\Gamma$ disjoint from $\Sigma$. Then
\begin{itemize}
\item[(i)] the set
$$\mathcal{D}_L=\{\langle \Gamma, s\rangle_{q^t} \cap \Sigma \,:\, s
\in {\mathcal P}\}$$ is a  Desarguesian $(t-1)$--spread of $\Sigma$;
\item[(ii)] there exists a director space $\bar \Theta$ of
$\mathcal{D}_L$ such that
\begin{equation} \label{form:GammaGen}
\Gamma =\langle \bar \Theta,\bar \Theta^{\tau},\dots,\bar
\Theta^{\tau^{t-3}}\rangle_{q^t},
\end{equation}
 where
$\tau=\Psi^m$ with $gcd(m,t)=1$.
\end{itemize}
\end{theorem}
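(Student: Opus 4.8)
The plan is to read $\mathcal{D}_L$ off the projection $p_\Gamma$ as the system of fibres lying over the lines of $\mathcal{P}$, and then to recognise the transversal spaces $T_1,T_2$ as director spaces of that system. Write $\Sigma=PG(W,\F_q)$, $\Sigma^*=PG(V,\F_{q^t})$ and $\Lambda=PG(V_\Lambda,\F_{q^t})$ with $V=\Gamma_V\oplus V_\Lambda$, $\Gamma=PG(\Gamma_V,\F_{q^t})$, and let $\pi\colon V\to V_\Lambda$ be the projection with kernel $\Gamma_V$. As $\Gamma\cap\Sigma=\emptyset$, the restriction $\pi|_W\colon W\to U:=\pi(W)$ is an $\F_q$-isomorphism and $L=L_U$. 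First I would record the elementary correspondence: if $M$ is a subspace of $\Lambda$ with underlying space $M_\Lambda$, then $\langle\Gamma,M\rangle\cap\Sigma$ is carried by $\pi$ onto $U\cap M_\Lambda$, so its projective dimension equals $\omega_L(M)-1$. Taking $M=s$ for $s\in\mathcal{P}$ (where $\omega_L(s)=t$) shows every element of $\mathcal{D}_L$ is a $(t-1)$-subspace of $\Sigma$; and for distinct $s,s'\in\mathcal{P}$, which are disjoint by Proposition~\ref{prop:rette disgiunte}, one has $V_s\cap V_{s'}=0$, so via $\pi$ the two fibres are disjoint. Since $|\mathcal{P}|=(q^{nt}-1)/(q^t-1)$ is exactly the number of members of a $(t-1)$-spread of $\Sigma=PG(tn-1,q)$, these pairwise disjoint $(t-1)$-spaces partition $\Sigma$ and $\mathcal{D}_L$ is a $(t-1)$-spread.

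To prove that it is Desarguesian I would apply Remark~\ref{rema:CharDesSpread} with $H=T_1$. For $X\in\mathcal{D}_L$ put $X^*=\langle X\rangle_{q^t}$; this is the unique $(t-1)$-space of $\Sigma^*$ meeting $\Sigma$ in $X$, and because $\Psi$ fixes $\Sigma$ pointwise, $X^*$ is $\Psi$-invariant. From $X\subseteq\langle\Gamma,s\rangle$ we get $X^*\subseteq\langle\Gamma,s\rangle$, and a dimension count gives $\langle\Gamma,s\rangle\cap\Lambda=s$; hence $T_1\cap\langle\Gamma,s\rangle=T_1\cap s=\{s\cap T_1\}$. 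Thus condition (ii) of Remark~\ref{rema:CharDesSpread} for $H=T_1$ reduces to the single incidence $s\cap T_1\in X^*$ for every $s\in\mathcal{P}$, while the spanning condition (i) is immediate from $\langle\Gamma,\Lambda\rangle=\Sigma^*$ and $\langle T_1,T_2\rangle=\Lambda$. I regard this incidence as the heart of the matter: the dimension counts only locate $s\cap T_1$ inside $\langle\Gamma,s\rangle$, and its membership in the smaller span $X^*$ genuinely uses the scattered structure. I would settle it by parametrising the points of $L$ on $s$ as in~(\ref{generalizedpseudoregulus}), describing $X$ explicitly as $\pi|_W^{-1}(U\cap V_s)$, and exploiting the $\Psi$-invariance of $X^*$ to force the transversal point $s\cap T_1$ into $\langle X\rangle_{q^t}$. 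Granting this, Remark~\ref{rema:CharDesSpread} yields that $\mathcal{D}_L$ is Desarguesian and that $T_1$ is a director space of it; the same argument with $H=T_2$ shows $T_2$ is a director space as well. This proves (i).

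For (ii), since the director spaces of the Desarguesian spread $\mathcal{D}_L$ are precisely $T_1,T_1^{\Psi},\dots,T_1^{\Psi^{t-1}}$, we may write $T_2=T_1^{\Psi^{k}}$ with $k\in\{1,\dots,t-1\}$. Because projecting $\Sigma$ from $\langle T_1^{\Psi^{i}}\colon i\neq 0,k\rangle_{q^t}$ produces an $\F_{q^{\gcd(k,t)}}$-linear set by Remark~\ref{rema:gcd}, whereas $L$ is scattered over $\F_q$ by Remark~\ref{rema:sclinset}, we must have $\gcd(k,t)=1$. It then remains to identify $\Gamma$ with $\Gamma_0:=\langle T_1^{\Psi^{i}}\colon i\neq 0,k\rangle_{q^t}$, which I would do by showing each director space $D_j=T_1^{\Psi^{j}}$ with $j\neq 0,k$ is contained in $\Gamma$: otherwise its image $\langle\Gamma,D_j\rangle/\Gamma$ in the quotient $\Sigma^*/\Gamma\cong\Lambda$ (cf.\ Remark~\ref{rem:axis}) would meet every line of $\mathcal{P}$ in the point corresponding to $X^*\cap D_j$, and a count against $|\mathcal{P}|$ would force this image to be an $(n-1)$-space, i.e.\ a transversal space of $L$ distinct from $T_1,T_2$, contradicting Corollary~\ref{cor:unicPseudo}. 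Hence $\Gamma=\Gamma_0$. Finally, using $\gcd(k,t)=1$, the reindexing $\tau=\Psi^{k}$ and $\bar\Theta=T_1^{\Psi^{2k}}$ gives $\{\bar\Theta,\bar\Theta^{\tau},\dots,\bar\Theta^{\tau^{t-3}}\}=\{D_j\colon j\neq 0,k\}$ (indices modulo $t$), and therefore $\Gamma=\langle\bar\Theta,\bar\Theta^{\tau},\dots,\bar\Theta^{\tau^{t-3}}\rangle_{q^t}$, as required.

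The step I expect to be the main obstacle is the incidence $s\cap T_1\in\langle\langle\Gamma,s\rangle\cap\Sigma\rangle_{q^t}$ used to verify Remark~\ref{rema:CharDesSpread}: this is exactly where the pseudoregulus (scattered) hypothesis must enter, and it cannot be extracted from incidence dimensions alone, since those only pin $s\cap T_1$ down inside $\langle\Gamma,s\rangle$. Once it is in place, both (i) and (ii) follow from the spread count, the characterisation in Remark~\ref{rema:CharDesSpread}, the Galois/norm bookkeeping giving $\gcd(k,t)=1$, and the uniqueness of the transversal spaces in Corollary~\ref{cor:unicPseudo}.
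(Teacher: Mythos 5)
Your reduction of the problem is accurate, and you have correctly located the crux: to apply Remark~\ref{rema:CharDesSpread} with $H=T_1$ you must prove the incidence $s\cap T_1\in X_s^*$, where $X_s^*$ is the $\F_{q^t}$-extension of $X_s=\langle\Gamma,s\rangle_{q^t}\cap\Sigma$. But this is precisely the step you do not prove, and your sketched plan for it is circular: equation~(\ref{generalizedpseudoregulus}) describes the intersection $s_P\cap L$ only for the algebraically constructed sets $L_{\rho,f}$ of Theorem~\ref{thm:algebraicpseudoregulus}, and the statement that an abstract linear set of pseudoregulus type has that form is Theorem~\ref{theorem:geomPseuAlegpseudo} --- which is deduced \emph{from} the theorem you are proving. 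Nothing available at this stage parametrises $L\cap s$, so ``exploiting the $\Psi$-invariance of $X_s^*$'' has no concrete content. A second, smaller flaw in the same part: the spanning condition $\Sigma^*=\langle T_1,T_1^{\Psi},\dots,T_1^{\Psi^{t-1}}\rangle_{q^t}$ is not ``immediate'' from $\langle\Gamma,\Lambda\rangle_{q^t}=\Sigma^*$ and $\langle T_1,T_2\rangle_{q^t}=\Lambda$; those facts say nothing about the conjugates $T_1^{\Psi^i}$, $i\geq 1$, and their independence also needs an argument.

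The paper circumvents exactly this obstacle by \emph{not} taking $T_1$ as the candidate director space. It sets $K_1=\langle\Gamma,T_1\rangle_{q^t}$, notes that $L\cap T_1=\emptyset$ forces $K_1\cap\Sigma=\emptyset$, and defines $\Theta=K_1\cap K_1^{\Psi}\cap\cdots\cap K_1^{\Psi^{t-2}}$, an $(n-1)$-space. The incidence $X_s^*\cap\Theta\neq\emptyset$ then comes for free from a dimension argument: with $P=s\cap T_1$, the space $H_s=\langle\Gamma,P\rangle_{q^t}\cap X_s^*$ is a hyperplane of $X_s^*$ contained in $K_1$, hence disjoint from $\Sigma$, so $H_s,H_s^{\Psi},\dots,H_s^{\Psi^{t-1}}$ are $t$ independent hyperplanes of the $\Psi$-invariant space $X_s^*$, and the intersection of the first $t-1$ of them is a point of $X_s^*\cap\Theta$. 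No parametrisation of $L$ is needed, and the scattered/pseudoregulus hypothesis enters only through the disjointness $L\cap T_1=\emptyset$ and, at the very end, through Remarks~\ref{rema:gcd} and~\ref{rema:sclinset} to get $\gcd(m,t)=1$. Note also that your part (ii) inverts the logical order on this last point: you invoke Remark~\ref{rema:gcd} for the projection from $\Gamma_0=\langle T_1^{\Psi^i}:i\neq 0,k\rangle_{q^t}$ to conclude $\gcd(k,t)=1$ \emph{before} you have identified $\Gamma$ with $\Gamma_0$; until that identification is made, the projection from $\Gamma_0$ is a priori a different linear set and yields no contradiction with properties of $L$. Your contradiction argument for $D_j\subseteq\Gamma$ also leaves open the case in which the quotient image of $D_j$ coincides with $T_1$ or $T_2$ (then no third transversal space arises), and it uses disjointness of $\langle\Gamma,D_j\rangle_{q^t}$ from $\Sigma$, which is not established. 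These latter points are repairable, but the unproven incidence in part (i) is a genuine gap on which the whole proposal rests.
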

\begin{proof}
Since  each line $s$ of $\mathcal P$ has weight $t$ in $L$, it is
clear that $\langle \Gamma, s\rangle_{q^t} \cap \Sigma $ is a
$(t-1)$-dimensional subspace of $\Sigma$. Also, since the lines of
$\mathcal P$ are pairwise disjoint and $|\mathcal
P|=\frac{q^{nt}-1}{q^t-1}$, the set $\mathcal{D}_L$ in $(i)$ is a
$(t-1)$-spread of $\Sigma$.

 Denote by $T_1$
and $T_2$ the transversal spaces of $\mathcal{P}$ and  let $K_1$ be
the $(n(t-1)-1)$-dimensional subspace of $\Sigma^*$ joining $\Gamma$
and $T_1$. Since $L$ is disjoint from $T_1$, we have that $K_1 \cap
\Sigma = \emptyset$, and hence $K_1 \cap K_1^{\Psi} \cap \dots \cap
K_1^{\Psi^{t-1}} = \emptyset$. So $\Theta=K_1 \cap K_1^{\Psi} \cap
\dots \cap K_1^{\Psi^{t-2}}$ is an $(n-1)$-dimensional subspace of
$\Sigma^*$ and $\Sigma^*=\langle \Theta, \Theta^\Psi,\dots,
\Theta^{\Psi^{t-1}} \rangle_{q^t}$.

Now, for each line $s$ of $\mathcal P$, let $X_s=\langle \Gamma,
s\rangle_{q^t} \cap \Sigma$ be the corresponding element of the
spread $\mathcal{D}_L$ and denote by $X_s^*$ the $(t-1)$-dimensional
subspace of $\Sigma^*$  such that $X_s=X_s^* \cap \Sigma$, i.e.
$X_s^*$ is the $\F_{q^t}$--extension of $X_s$ in $\Sigma^*$. So
$X^*_s\subset \langle \Gamma,s\rangle_{q^t}$ and since $X^*_s$
intersects $\Sigma$ in a subspace of the same dimension, we have
that
 $(X_s^*)^\Psi=X_s^*$ (see \cite[Lemma 1]{Lu1999}). Also, let $P$ be the point $s\cap T_1$. Then
$\langle \Gamma, P \rangle_{q^t}$ is a hyperplane of $\langle
\Gamma, s\rangle_{q^t}$ and hence $H_s=\langle \Gamma,
P\rangle_{q^t} \cap X_s^*$ is a $(t-2)$-dimensional subspace of
$X_s^*$. Since $H_s\subseteq K_1$, we have that $H_s$ is disjoint
from $\Sigma$ and hence $H_s\cap H_s^\Psi\cap \dots\cap
H_s^{\Psi^{t-1}}=\emptyset$. So $H_s, H_s^\Psi, \dots,
H_s^{\Psi^{t-1}}$ are $t$ independent hyperplanes of $X_s^*$. This
implies that $ H_s\cap H_s^\Psi\cap \cdots\cap H_s^{\Psi^{t-2}}$ is
a point, say $R_s$,  of $X_s^*$. So
 $$R_s\in
X^*_s\cap (H_s\cap H_s^\Psi\cap \dots\cap H_s^{\Psi^{t-2}})
\subset X^*_s\cap (K_1\cap K_1^\Psi\cap \dots\cap
K_1^{\Psi^{t-2}})=X^*_s\cap \Theta$$
  for each $s \in \mathcal P$. By Remark \ref{rema:CharDesSpread}
  we get
that $\mathcal{D}_L$ is a Desarguesian spread of $\Sigma$ with
$\Theta$ as a director space. Also, $\Theta^{\Psi^i} \subset
K_1\,\,$ for each $i\neq 1$ and hence $K_1=\langle
\Theta^{\Psi^i}\,:\, i\neq 1\rangle_{q^t}$ and $K_1\cap
\Theta^\Psi=\emptyset$.

Similarly, if $K_2=\langle \Gamma, T_2\rangle_{q^t}$, we get that
$K_2 \cap K_2^{\Psi} \cap \cdots \cap K_2^{\Psi^{t-2}}$ is a
director space of the Desarguesian spread ${\cal D}_L$ and hence
there exists $m\in \{1,2,\dots,t-1\}$ such that $K_2 \cap K_2^{\Psi}
\cap \cdots \cap K_2^{\Psi^{t-2}}=\Theta^{\Psi^m}$ (see
\cite[Theorem 3]{LV}). So $\Theta^{\Psi^i} \subset K_2\,\,$ for each
$i\neq m+1$ and hence $K_2=\langle \Theta^{\Psi^i}\,:\, i\neq
m+1\rangle_{q^t}$ and $K_2\cap \Theta^{\Psi^{m+1}}=\emptyset$. This
means that

$$\Gamma=K_1\cap K_2=\langle
\Theta^{\Psi^i}\,:\, i\neq 1,m+1\rangle_{q^t}.$$ So, if
$\Psi^m=\tau$ and $\bar \Theta=\Theta^{\Psi^{2m+1}}$, we get
(\ref{form:GammaGen}) of $(ii)$. Finally, since $L$ is a scattered
$\F_q$-linear set, by Theorem \ref{theorem:PSeuprojDesSpread},
Remarks \ref{rema:gcd} and \ref{rema:sclinset}, we have that
$gcd(t,m)=1$.
 \end{proof}

By Theorems \ref{theorem:PSeuprojDesSpread} and
\ref{theorem:DesargProj} we have

\begin{theorem} \label{theorem:geomPseuAlegpseudo}
Each $\F_q$-linear set  of $PG(2n-1,q^t)$  of pseudoregulus type is
of the form $L_{\rho,f}$ described in Theorem
\ref{thm:algebraicpseudoregulus}.
\end{theorem}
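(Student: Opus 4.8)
The plan is to combine Theorems \ref{Theor LP1}, \ref{theorem:DesargProj} and \ref{theorem:PSeuprojDesSpread}: the first realizes $L$ as a projection of a subgeometry, the second pins down the precise shape of the projection center, and the third recognizes exactly that shape as producing a linear set of type $L_{\rho,f}$. We may assume $t\geq 3$, since for $t=2$ Remark \ref{rema:Subgeometry} tells us $L$ is a Baer subgeometry, and every such subgeometry is of the form $L_{\rho,f}$ via the construction of Theorem \ref{thm:algebraicpseudoregulus} with $\sigma$ generating $Gal(\Ft:\F_q)$.

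First I would observe, using Remark \ref{rema:sclinset}, that a linear set $L$ of pseudoregulus type has rank $tn$, spans $\Lambda=PG(2n-1,q^t)$, and is not a linear set of smaller rank; in particular its rank $tn$ exceeds $2n$, so $L$ is not a subgeometry. Hence, by the converse part of Theorem \ref{Theor LP1} applied with $r=2n$ and $k=tn$, there exist a subgeometry $\Sigma\simeq PG(tn-1,q)$ of $\Sigma^*=PG(tn-1,q^t)$ and an $(n(t-2)-1)$-dimensional center $\Gamma$, disjoint from $\Sigma$, with $L=p_\Gamma(\Sigma)$. Choosing a semilinear collineation $\Psi$ of $\Sigma^*$ of order $t$ with $Fix(\Psi)=\Sigma$, we are precisely in the setting of Theorem \ref{theorem:DesargProj}.

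Applying Theorem \ref{theorem:DesargProj} then yields that the spread $\mathcal{D}_L$ is Desarguesian, with a director space $\bar\Theta$, and that the center has the form $\Gamma=\langle\bar\Theta,\bar\Theta^{\tau},\dots,\bar\Theta^{\tau^{t-3}}\rangle_{q^t}$ with $\tau=\Psi^m$ and $\gcd(m,t)=1$. The key remaining point is that $\tau$ may play the role of the defining collineation in Theorem \ref{theorem:PSeuprojDesSpread}: since $\gcd(m,t)=1$ we have $\langle\tau\rangle=\langle\Psi\rangle$, so $\tau$ has order $t$ and $Fix(\tau)=Fix(\Psi)=\Sigma$, while the $\tau$-orbit of $\bar\Theta$ coincides with the uniquely determined set of director spaces of $\mathcal{D}_L$, so that $\bar\Theta$ is indeed a director space relative to $\tau$.

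Finally I would read off the indices. Writing $\Gamma=\langle\bar\Theta^{\tau^i}\,:\,i\neq t-2,\,t-1\rangle_{q^t}$, the center is exactly the one prescribed in Theorem \ref{theorem:PSeuprojDesSpread} for the pair $i_1=t-2$, $i_2=t-1$, for which $\gcd(i_2-i_1,t)=\gcd(1,t)=1$. That theorem then gives directly that $L=p_\Gamma(\Sigma)$ is a scattered $\F_q$-linear set of type $L_{\rho,f}$, with transversal spaces $\bar\Theta^{\tau^{t-2}}$ and $\bar\Theta^{\tau^{t-1}}$. The main (and only genuine) obstacle is the verification in the previous paragraph that $\tau$ and $\bar\Theta$ satisfy the hypotheses of Theorem \ref{theorem:PSeuprojDesSpread}; once this bookkeeping is settled the conclusion is immediate and no further computation is needed.
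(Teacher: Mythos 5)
Your proposal is correct and follows essentially the same route as the paper: the paper's own proof is exactly the observation that the statement follows by combining Theorem \ref{Theor LP1} (every spanning linear set is a projection of a subgeometry, noted just before the theorem), Theorem \ref{theorem:DesargProj} (the center $\Gamma$ is spanned by $t-2$ consecutive $\tau$-conjugates of a director space, $\tau=\Psi^m$, $\gcd(m,t)=1$), and Theorem \ref{theorem:PSeuprojDesSpread} (such a projection is of type $L_{\rho,f}$). The bookkeeping you single out --- that $\langle\tau\rangle=\langle\Psi\rangle$ makes $\bar\Theta$ a director space relative to $\tau$ so that \ref{theorem:PSeuprojDesSpread} applies with $i_1=t-2$, $i_2=t-1$ --- is precisely what the paper leaves implicit, and your separate treatment of $t=2$ via Remark \ref{rema:Subgeometry} is a harmless (indeed slightly more careful) addition.
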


\noindent Finally, by Theorem \ref{theorem:geomPseuAlegpseudo} and
by Corollary \ref{cor:ProjEquiva} we can state the following
classification result which generalizes \cite[Theorem 4]{LV}.

\begin{theorem} \label{thm:numbprojequipseudoregulus}
In the projective space $\Lambda=PG(2n-1,q^t)$ ($n\geq 2$, $t\geq
3$) there are $\varphi(t)/2$  orbits of maximum scattered
$\F_q$-linear sets of pseudoregulus type under the action of the
collineation group of $\Lambda$.
\end{theorem}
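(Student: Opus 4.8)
The plan is to derive this statement as an immediate synthesis of the two preceding results, Theorem~\ref{theorem:geomPseuAlegpseudo} and Corollary~\ref{cor:ProjEquiva}, after first reconciling the terminology. The essential observation is that for a linear set in $\Lambda = PG(2n-1,q^t)$, being of pseudoregulus type already forces the maximality of its rank, so the two adjectives in the statement are not independent constraints.

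First I would note that, by Definition~\ref{def:pseud}, a linear set of pseudoregulus type in $PG(2n-1,q^t)$ is scattered of rank $tn$. Since here the ambient rank is $r = 2n$, we have $tn = (2n)t/2 = rt/2$, which by Theorem~\ref{Thm. BL} and the definition following it is precisely the rank of a \emph{maximum} scattered linear set. Hence every $\F_q$-linear set of pseudoregulus type is automatically maximum scattered, and the phrase ``maximum scattered of pseudoregulus type'' in the statement describes exactly the family of linear sets of pseudoregulus type.

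Next I would identify this family with the algebraically-defined family $\{L_{\rho,f}\}$. One inclusion is Theorem~\ref{thm:algebraicpseudoregulus}: every $L_{\rho,f}$ (with companion automorphism $\sigma$ fixing exactly $\F_q$) is a maximum scattered linear set of pseudoregulus type. The reverse inclusion is exactly the content of Theorem~\ref{theorem:geomPseuAlegpseudo}: every $\F_q$-linear set of $PG(2n-1,q^t)$ of pseudoregulus type is of the form $L_{\rho,f}$. Thus the two classes coincide as sets of linear sets of $\Lambda$, and in particular they have the same decomposition into orbits under the collineation group $P\Gamma L(2n,q^t)$.

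Finally I would invoke Corollary~\ref{cor:ProjEquiva}, which states that there are exactly $\varphi(t)/2$ orbits of scattered $\F_q$-linear sets of type $L_{\rho,f}$ under the action of the collineation group of $\Lambda$. Since that family equals the family of pseudoregulus type linear sets by the previous paragraph, the orbit count transfers verbatim, giving $\varphi(t)/2$ orbits and completing the proof. I do not expect any genuine obstacle here: the substantive work lies entirely in the earlier results (the projective-equivalence criterion $\sigma_f = \sigma_g^{\pm 1}$ of Theorem~\ref{theorem:ProjEquiv} and the geometric realization in Theorems~\ref{theorem:PSeuprojDesSpread} and~\ref{theorem:DesargProj}); the only point demanding care is to verify both inclusions so that the equality of the two families — and hence the identification of their orbit sets — is beyond doubt before the count is applied.
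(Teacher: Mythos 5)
Your proposal is correct and follows exactly the paper's own route: the paper derives this theorem by combining Theorem~\ref{theorem:geomPseuAlegpseudo} (every pseudoregulus-type linear set is of the form $L_{\rho,f}$) with the orbit count of Corollary~\ref{cor:ProjEquiva}, which is precisely your argument. The extra details you supply (that pseudoregulus type forces maximum scattered rank, and that Theorem~\ref{thm:algebraicpseudoregulus} gives the reverse inclusion) are correct and merely make explicit what the paper leaves implicit.
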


 \section{A class of maximum scattered $\F_q$-linear sets of
 $PG(1,q^t)$}\label{sec:scattered-line}

The arguments proving Theorem \ref{thm:algebraicpseudoregulus} can
be exploited to construct a class of maximum scattered $\F_q$-linear
sets of the projective line $\Lambda=PG(V,\F_{q^t})=PG(1,q^t)$ with
a structure resembling that of an $\F_q$-linear set
 of $PG(2n-1,q^t)$ $(n,t \geq 2)$ of pseudoregulus type. To this
aim let $P_1=\langle \underbar w \rangle_{q^t}$ and $P_2=\langle
\underbar v \rangle_{q^t}$ be two distinct points of $\Lambda$ and
let $\tau$ be an $\F_q$-automorphism of $\F_{q^t}$ such that
$Fix(\tau)=\F_q$; then for each $\rho \in \F_{q^t}^*$ the set
$$W_{\rho,\tau}=\{\lambda \underbar w +
\rho\lambda^{\tau}\underbar v  \,\, \colon \,\, \lambda \in
\F_{q^t}\},$$
 is an $\F_q$--vector subspace of $V$
of dimension $t$ and  $L_{\rho,\tau}:=L_{W_{\rho,\tau}}$ is a
scattered $\F_q$-linear set of $\Lambda$.
\begin{defin} \label{def:pseudoregulusline} \rm
We call the linear sets $L_{\rho,\tau}$  of {\it pseudoregulus type}
and we refer to the points $P_1$ and $P_2$ as {\it transversal
points} of $L_{\rho,\tau}$.
\end{defin}

If $L_{\rho,\tau} \cap L_{\rho',\tau} \neq \emptyset$, then
$L_{\rho,\tau}=L_{\rho',\tau}$. Note that
$L_{\rho,\tau}=L_{\rho',\tau}$ if and only if $N_{q^t/q}(\rho)=
N_{q^t/q}(\rho')$; so $P_1$, $P_2$ and the automorphism $\tau$
define a set of $q-1$ mutually disjoint maximum scattered linear
sets of pseudoregulus type admitting the same transversal points.
Such maximal scattered linear sets, together with $P_1$ and $P_2$,
cover the point set of the line $\Lambda=PG(1,q^t)$.

\begin{remark} \label{rema:baersubline} \rm
Since the group $PGL(2,q^t)$ acts 2--transitively on the points of
$\Lambda$, we may suppose that all $\F_q$--linear sets of
pseudoregulus type of $\Lambda$ have the same transversal points $P_1$ and $P_2$. This means that all such linear sets are only
determined by $\rho$ and by the automorphism $\tau$. Moreover, it is
easy to see that for each $\rho,\rho' \in \Ft^*$ the linear sets
$L_{\rho,\tau}$ and $L_{\rho',\tau}$ are equivalent. Indeed, it is
sufficient to consider the collineation of $\Lambda=PG(V,\F_{q^t})$
induced by the map $a\underbar w+b\underbar v\in V\mapsto
a\rho^{\tau^{-1}}\underbar w+b\rho'\underbar v\in V$. It follows
that, up to projectively equivalence, we may only consider $\F_q$--linear sets of type
$L_{1,\sigma_i}$, where $\sigma_i:\, x\in\F_{q^t}\mapsto
x^{q^i}\in\F_{q^t}$, with $i\in\{1,\dots,t-1\}$ and $gcd(i,t)=1$.
Now, by observing that, for each $i,j\in\{1,\dots,t-1\}$ with
$gcd(i,t)=gcd(j,t)=1$,
$$L_{1,\sigma_i}=\{\langle(x,x^{q^i})\rangle_{q^t}: x\in\F_{q^t}^*\}=\{\langle(1,x^{q^i-1})\rangle_{q^t}: x\in\F_{q^t}^*\}=$$ $$=\{\langle(1,a)\rangle_{q^t}: a\in\F_{q^t}^*, N_{q^t/q}(a)=1\}=\{\langle(x,x^{q^j})\rangle_{q^t}: x\in\F_{q^t}^*\}=L_{1,\sigma_j},$$
we have that in $\Lambda=PG(1,q^t)$ ($t\geq 2$) all $\F_q$-linear
sets of pseudoregulus type are equivalent to the linear set $L_{1,\sigma_1}$, under the action of the
collineation group of $\Lambda$. This result has been also proven in
\cite[Remark 2.2]{DoDuSub}.
\end{remark}

\begin{prop}\label{prop:unictraspoints}
If $L$ is an $\F_q$-linear set of pseudoregulus type of
$\Lambda=PG(1,q^t)$, $t\geq 3$, then its transversal points are
uniquely determined.
\end{prop}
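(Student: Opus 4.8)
The plan is to strip the statement down to a purely projective fact about the line and the norm map, and then to settle that fact by a single polynomial identity whose usable coefficients are controlled by Lucas' theorem.

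First I would normalise. By Remark \ref{rema:baersubline} I may apply an element of $PGL(2,q^t)$ carrying one pair of transversal points of $L$ to $P_1=\langle(1,0)\rangle$ and $P_2=\langle(0,1)\rangle$; writing a point $\langle(x_0,x_1)\rangle$ by its ratio $z=x_1/x_0\in\Ft\cup\{\infty\}$, the linear set then becomes the norm-one locus $\mu=\{z\in\Ft^\ast:\ N_{q^t/q}(z)=1\}$, which is exactly the subgroup of $\Ft^\ast$ of order $m=\frac{q^t-1}{q-1}$, i.e. the set of $m$-th roots of unity, and its transversal points are $z=0,\infty$. The same computation shows that any $L_{\rho,\tau}$ with transversal points $0,\infty$ is the full norm-coset $S_c=\{z:\ N_{q^t/q}(z)=c\}$ with $c=N_{q^t/q}(\rho)$. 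Now suppose, for contradiction, that $L$ also admits transversal points $\{Q_1,Q_2\}\neq\{0,\infty\}$. Choosing $\delta\in PGL(2,q^t)$ with $\delta(Q_1)=0$, $\delta(Q_2)=\infty$, and using that a projectivity sends a linear set of pseudoregulus type to one of the same type while carrying transversal points to transversal points, $\delta(L)=\delta(\mu)$ is a norm-coset $S_c$. Composing $\delta$ with the scalar map $z\mapsto z/w$ (where $N_{q^t/q}(w)=c$), which fixes $\{0,\infty\}$, I reduce to the following crux: \emph{every} $\gamma\in PGL(2,q^t)$ with $\gamma(\mu)=\mu$ stabilises $\{0,\infty\}$. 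Indeed this forces $\{Q_1,Q_2\}=\delta^{-1}\{0,\infty\}=\{0,\infty\}$, the desired contradiction.

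To prove the crux, write $\gamma(z)=\tfrac{az+b}{cz+d}$ with $ad-bc\neq0$. Since $\gamma$ permutes the $m$-th roots of unity, I have $\prod_{\zeta\in\mu}(T-\gamma(\zeta))=T^m-1$. Using the elementary identity $\prod_{\zeta\in\mu}(A\zeta+B)=B^m-(-1)^mA^m$ and clearing denominators, this becomes the polynomial identity
$$(Td-b)^m-(-1)^m(Tc-a)^m=(d^m-(-1)^m c^m)(T^m-1)$$
in $\Ft[T]$. Comparing the coefficient of $T^k$ for $0<k<m$ gives $\binom{m}{k}\big(d^k(-b)^{m-k}-(-1)^m c^k(-a)^{m-k}\big)=0$, so that whenever $\binom{m}{k}\not\equiv0\pmod p$ we obtain the relation $d^kb^{m-k}=(-1)^m c^k a^{m-k}$.

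The main obstacle is positive characteristic: most $\binom{m}{k}$ vanish, so I must pin down which coefficient equations survive. Since $q=p^h$ and $m=1+q+\cdots+q^{t-1}=\sum_{j=0}^{t-1}p^{jh}$, Lucas' theorem shows $\binom{m}{k}\not\equiv0$ precisely when $k=\sum_{j\in J}q^j$ for some $J\subseteq\{0,\dots,t-1\}$; in particular $k=1$, $k=q$ and $k=1+q$ all survive and all lie strictly between $0$ and $m$ — and it is exactly the hypothesis $t\geq3$ that guarantees $1+q<m$ (for $t=2$ this index is the endpoint $m$ and the argument correctly collapses, matching the non-uniqueness for Baer sublines). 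I would first dispose of the degenerate cases: if $b=0$ the $k=1$ relation forces $c=0$, so $\gamma$ fixes $0,\infty$; if $a=0$ the $k=m-1$ relation forces $d=0$, so $\gamma$ swaps them.

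In the remaining case $a,b,c,d$ are all nonzero, so I may set $u=d/c$ and $v=b/a$. The relations from $k=1,\,q,\,1+q$ read
$$uv^{m-1}=(-1)^m,\qquad u^{q}v^{m-q}=(-1)^m,\qquad u^{1+q}v^{m-1-q}=(-1)^m.$$
Multiplying the first two and dividing by the third yields $v^m=(-1)^m$; substituting back into the first gives $u=v$, i.e. $ad=bc$, contradicting $ad-bc\neq0$. This rules out the remaining case, establishes the crux, and hence shows that the transversal points of $L$ are uniquely determined.
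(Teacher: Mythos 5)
Your proof is correct, but it takes a genuinely different route from the paper's. Both arguments start from the normalisation of Remark \ref{rema:baersubline} ($L=L_{1,\sigma_1}$ with transversal points $\langle(1,0)\rangle_{q^t}$ and $\langle(0,1)\rangle_{q^t}$), but from there the paper works directly with a hypothetical second representation of $L$: equating the two parametrisations yields the functional identity $N_{q^t/q}(\mu b+\rho\mu^q d)=N_{q^t/q}(\mu a+\rho\mu^q c)$ for all $\mu\in\F_{q^t}$, which becomes a polynomial identity only after a degree count (needing $q\geq 3$) plus a separate reduction modulo $\mu^{q^t}-\mu$ when $q=2$, and then two coefficient comparisons force $b=c=0$ or $a=d=0$. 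You instead recast uniqueness as a rigidity statement --- any element of $PGL(2,q^t)$ stabilising the norm-one subgroup $\mu$ must stabilise $\{0,\infty\}$ --- and prove it from the identity $\prod_{\zeta\in\mu}(T-\gamma(\zeta))=T^m-1$, which is exact by construction (both sides are monic of degree $m$ with the same $m$ simple roots), with Lucas' theorem selecting which coefficient equations survive. Your route buys a uniform treatment of all $q$ (no degree bound, no $q=2$ special case), a clean isolation of where $t\geq 3$ enters (namely $1+q<m$, with the expected collapse at $t=2$ matching non-uniqueness for Baer sublines), and a stabiliser statement of independent interest; the paper's route is more elementary and self-contained (no Lucas, no product-over-roots-of-unity identity) and stays entirely inside the defining parametrisation of the linear set. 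One elision you should patch: after disposing of $b=0$ and $a=0$ you assert that in the remaining case $a,b,c,d$ are all nonzero, but $c\neq 0$ and $d\neq 0$ require justification; it is immediate from the $k=1$ relation $db^{m-1}=(-1)^m ca^{m-1}$ (if $c=0$ then $db^{m-1}=0$ with $d\neq 0$ forces $b=0$, and if $d=0$ then $ca^{m-1}=0$ with $c\neq 0$ forces $a=0$, contradicting the case hypothesis either way), so this is a one-line fix rather than a gap.
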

\begin{proof}
By Remark \ref{rema:baersubline}, we may consider the $\F_q$--linear
set of pseudoregulus type
\begin{equation}\label{form:1}
L:=L_{1,\sigma_1}=\{\langle(\lambda,\lambda^q)\rangle_{q^t}:\,\lambda\in\F_{q^t}^*\},
\end{equation}
having $P_1=\langle(1,0)\rangle_{q^t}$ and $P_2=\langle(0,1)\rangle_{q^t}$ as transversal points.

Suppose that $L$ has another pair of transversal points
$P_1'=\langle\uw\rangle_{q^t}$ and $P_2'=\langle\uv\rangle_{q^t}$,
with $\uw=\langle(a,b)\rangle_{q^t}$ and
$\uv=\langle(c,d)\rangle_{q^t}$, such that $ad\ne bc$. Then $
L=\{\langle\eta
\uw+\rho\eta^\tau\uv\rangle_{q^t}:\,\eta\in\F_{q^t}^*\}, $ with
$\tau\in Aut(\F_{q^t})$. Moreover, arguing as in the previous
remark, we have that
\begin{equation}\label{form:2}
L=\{\langle
\uw+\rho\eta^{\tau-1}\uv\rangle_{q^t}:\,\eta\in\F_{q^t}^*\}=\{\langle
\uw+\rho\mu^{q-1}\uv\rangle_{q^t}:\,\mu\in\F_{q^t}^*\}=\{\langle
\mu\uw+\rho\mu^q\uv\rangle_{q^t}:\,\mu\in\F_{q^t}^*\}.
\end{equation}
By (\ref{form:1}) and (\ref{form:2}), we have that for each
$\lambda\in\F_{q^t}^*$, there exist
$\alpha_\lambda,\mu\in\F_{q^t}^*$ such that
$$
(\lambda,\lambda^q)=\alpha_\lambda(\mu\uw+\rho\mu^q\uv)=\alpha_\lambda(\mu
a+\rho\mu^qc,\mu b+\rho\mu^qd).
$$
Then, the above equality implies that
$$
\alpha_\lambda^{q-1}=\frac{\mu b+\rho\mu^qd}{(\mu
a+\rho\mu^qc)^q},$$ which gives $N_{q^t/q}(\mu
b+\rho\mu^qd)=N_{q^t/q}(\mu a+\rho\mu^qc)$ for each $\mu\in\F_{q^t}$, i.e.
\begin{equation}\label{form:5.1}
\prod_{i=0}^{t-1}(\mu^{q^i}
b^{q^i}+\rho^{q^i}\mu^{q^{i+1}}d^{q^i})=\prod_{i=0}^{t-1}(\mu^{q^i}
a^{q^i}+\rho^{q^i}\mu^{q^{i+1}}c^{q^i})\end{equation} for each
$\mu\in\F_{q^t}$. From the last equality we get a polynomial identity
in the variable $\mu$ of degree at most
$2q^{t-1}+q^{t-2}+\dots+q^3+q^2+q$. If $q\geq 3$, then $2q^{t-1}+q^{t-2}+\dots+q^3+q^2+q<q^t$, hence the polynomials in (\ref{form:5.1}) are the same. So comparing the coefficients of the terms of maximum
degree, we get
\begin{equation}\label{form:6}
d^{1+q+q^2+\dots +q^{t-2}}b^{q^{t-1}}=c^{1+q+q^2+\dots
+q^{t-2}}a^{q^{t-1}}.
\end{equation}
Also, comparing the coefficients of the terms of degree
$2q^{t-1}+q^{t-2}+\dots+q^3+q^2+1$, for $t>2$, we have
\begin{equation}\label{form:7}
d^{q+q^2+\dots +q^{t-2}}b^{q^{t-1}}b=c^{q+q^2+\dots
+q^{t-2}}a^{q^{t-1}}a.
\end{equation}
If $bd\ne 0$, then $ac\ne 0$ and dividing both sides of Equations (\ref{form:6}) and
(\ref{form:7}), we get $\frac db=\frac ca$, a contradiction since
$P_1'\ne P_2'$. If $b=0$, from (\ref{form:6}) we have $c=0$ and
hence $P_1'=P_1$ and $P_2'=P_2$; if $d=0$, then also $a=0$ by
(\ref{form:6}) and hence $P_1'=P_2$ and $P_2'=P_1$.

\medskip

If $q=2$, reducing (\ref{form:5.1}) modulo $\mu^{q^t}-\mu$, we get that the two polynomials of (\ref{form:5.1}) have degree at
most $q^{t-1}+q^{t-2}+\dots+q^3+q^2+q+1$. So, comparing the
coefficients of the terms of degree
$q^{t-1}+q^{t-2}+\dots+q^3+q^2+2$, and of the terms of degree
$q^{t-2}+q^{t-3}+\dots+q^3+q^2+2$ (for $t>2$), and arguing as above
we get the same result. This completes the proof.
\end{proof}

\begin{remark}
{\rm Note that if $t=2$, then $L_{\rho,\tau}$ is a Baer subline of
$\Lambda=PG(1,q^2)$ and $P_1$ and $P_2$ are conjugated with respect
to the semilinear involution of $\Lambda$ fixing $L_{\rho,\tau}$
pointwise. Hence, in such a case, the transversal points are not
uniquely determined.}
\end{remark}

\begin{remark}\label{rem:examplenonpseudoregulustype}\rm
Let $L_{\rho,f}$ be an $\F_q$-linear set of pseudoregulus type of
$PG(2n-1,q^t),$ $n>1$, and let $\mathcal P_{L_{\rho,f}}$ be the
associated $\F_q$-pseudoregulus. By (\ref{generalizedpseudoregulus})
and Definition \ref{def:pseudoregulusline}, we observe that for each
line $s\in {\cal P}_{\rho,f}$, the set $L_{\rho,f}\cap s$ is a
linear set of pseudoregulus type whose transversal points are the
intersections of $s$ with the transversal subspaces of
$\mathcal{P}_{L_{\rho,f}}$.
\end{remark}

We conclude this section by giving some  examples of maximum
scattered $\F_q$-linear sets  which are not of pseudoregulus type.

\begin{example}\label{example:noPseud}\rm
$(i)$\quad Let $$L_\rho=\{\langle (x,\rho
x^{q}+x^{q^{t-1}})\rangle_{q^t} : x\in\F_{q^t}^*\},$$ where
$\rho\in\F_{q^t}$ such that $N_{q^t/q}(\rho)\neq 1$. By
\cite[Theorem 2]{LP2001} $L_\rho$ is a scattered $\F_q$--linear set
of rank $t$. Moreover, if $q>3$, $\rho\ne 0$ and $t\geq 4$, by
\cite[Theorem 3]{LP2001}, there is no collineation of $PG(1,q^t)$
mapping $L_\rho$ to $L_{1,\sigma_1}$. Hence, by Remark
\ref{rema:baersubline}, $L_\rho$ is a maximum scattered
$\F_q$--linear set which is not of pseudoregulus type when $q>3$.

\noindent $(ii)$\quad Let $$L=\{\langle (x_0,x_1,\dots,x_{n-1},\rho
x_0^{q}+x_0^{q^{t-1}},x_1^q,\dots,x_{n-1}^q)\rangle_{q^t}:\,
x_i\in\F_{q^t}\},$$ with $\rho\in\F_{q^t}^*$ and $N_{q^t/q}(\rho)\ne
1$. It is easy to see that $L$ is a scattered $\F_q$--linear set of
rank $tn$. Also, the line $r$ with equations
$x_1=x_2=\dots=x_{n-1}=0$ is a line of weight $t$ in $L$ and, by the
previous arguments   $r\cap L$ is an $\F_q$--linear set which is not
of pseudoregulus type for $q>3$. So by Remark
\ref{rem:examplenonpseudoregulustype} and by Point $(i)$,  for each
$q>3$, $t\geq 4$ and $n\geq 2$, $L$ is not of pseudoregulus type.
\end{example}

\section{Linear sets and the variety $\Omega(\cS_{n,n})$}\label{sec:SegreVariety}

Let $\M=\M(n,q)~(n\geq 2)$ be the vector space of the matrices of
order $n\times n$ with entries in $\F_q$ and let
$PG(n^2-1,q)=PG(\M,\F_q).$ The Segre variety
$\cS_{n,n}=\cS_{n,n}(q)$ of $PG(n^2-1,q)$ is the set of all points
$\langle X \rangle_{q}$ of $PG(n^2-1,q)$ such that $X$ is a matrix
of rank $1$. Here below we list some well known properties of such a
variety, that can be found in \cite[pp. 98--99]{Harris},
\cite{Herzer} and \cite[Section 25.5]{HiTh1991}. Precisely,

\begin{itemize}
\item $|\cS_{n,n}|=(\frac{q^n-1}{q-1})^2$;
\item maximal subspaces of $\cS_{n,n}$ have dimension $n-1$;
\item there are two families $\cR_1$ and $\cR_2$ of maximal subspaces of $\cS_{n,n}$, which are the {\it systems} of $\cS_{n,n}$. Spaces of the same system are pairwise
skew and any two spaces of different systems meet in exactly one
point. The elements of each system partition $\cS_{n,n}$. Moreover,
$|\cR_1|=|\cR_2|=\frac{q^n-1}{q-1}$;
\item the automorphism group $Aut(\cS_{n,n})$ of $\cS_{n,n}$ is isomorphic to $P\Gamma L(n,q)\times P\Gamma L(n,q)\times C_2$, and it is the group of all
collineations of $PG(n^2-1,q)$ fixing or interchanging the two systems of $\cS_{n,n}.$
\end{itemize}

\noindent A $k$--dimensional subspace $S$ of $PG(n^2-1,q)$ is a {\em
$k$--th secant subspace} to $\cS_{n,n}$ when $S=\langle
P_1,P_2,\dots,P_{k+1}\rangle_{q}$ and $\{P_1,P_2, \dots,
P_{k+1}\}\subset S \cap \cS_{n,n}.$ The $(n-2)$--th {\em secant
variety} $\Omega(\cS_{n,n})$ of $\cS_{n,n}$ is the set of all points
of $PG(n^2-1,q)$ belonging to an $(n-2)$--th secant subspace to
$\cS_{n,n}.$ Note that
\begin{equation}\label{form:omega}
\Omega(\cS_{n,n})=\{\langle X \rangle_{q} \mid X \in
\M(n,q)\setminus \{{\bf 0}\}, det X=0\},
\end{equation}
i.e. $\Omega(\cS_{n,n})$ is the algebraic variety, also called {\it
determinantal hypersurface}, defined by the non--invertible matrices of $\M(n,q)$.

\bigskip

Regarding $\F_{q^n}$ as an $n$--dimensional vector space over $\F_q$
and fixing an $\F_q$--basis $\cal B$ of $\F_{q^n}$, each matrix $M$ of $\M=\M(n,q)$ defines an $\F_q$--endomorphism $\varphi_M$ of $\F_{q^n}$, and conversely.
The map $\phi_M:\,M\in\M\mapsto \varphi_M\in\E$, where $\E=End(\F_{q^n},\F_q)$ is the
$n^2$--dimensional vector space  of all the
$\F_q$--endomorphisms of $\F_{q^n}$, is an isomorphism between the vector spaces $\M$ and $\E$. By using such an isomorphism, we have that the elements of $\E$ with rank 1 define in
$PG(\E,\F_q)=PG(n^2-1,q)$ the Segre variety $\cS_{n,n}$. Recalling that each element $\varphi\in\E$ can be written as $\varphi(x)=\sum_{i=0}^{n-1}\beta_ix^{q^i}$, with $\beta_i\in\F_{q^n}$, we get the following result.
\begin{prop}\label{prop:segre-variety}
Let $\P=PG(\E,\F_q)=PG(n^2-1,q)$ and let $\cS_{n,n}$ be the Serge
variety of $\P$ defined by the elements of $\E$ with rank 1. Then
$$\cS_{n,n}=\{\langle t_\lambda\circ Tr\circ t_\mu\rangle_{q}:\
\lambda,\mu\in\F_{q^n}^*\}\ \ ^(\footnote{$\circ$ stands for
composition of maps}^),$$ where $t_\alpha:x\in\F_{q^n}\mapsto \alpha
x\in\F_{q^n}$, with $\alpha\in\F_{q^n}$ and $Tr:x\in\F_{q^n}\mapsto
x+x^q+\dots+x^{q^{n-1}}\in\F_{q}$. Moreover $\cR_1=\{X(\lambda):
\lambda\in\F_{q^n}^*\}$ and $\cR_2=\{X'(\lambda):
\lambda\in\F_{q^n}^*\}$, where
$$X(\lambda)=\{\langle t_\alpha\circ Tr\circ t_\lambda\rangle_{q}:\
\alpha\in\F_{q^n}^*\}\mbox{\quad and \quad} X'(\lambda)=\{\langle
t_\lambda\circ Tr\circ t_\alpha\rangle_{q}:\
\alpha\in\F_{q^n}^*\},$$ are the two systems of $\cS_{n,n}$.
Finally, $\Omega(\cS_{n,n})$ is defined by the non--invertible elements of
$\mathbb E$.
\end{prop}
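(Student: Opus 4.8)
The plan is to establish the explicit description of $\cS_{n,n}$ by leveraging the classical fact that every $\F_q$-endomorphism $\varphi\in\E$ of rank $1$ factors as a composition of a trace-type functional with scalar multiplications. First I would recall that each rank-$1$ endomorphism $\varphi$ has a one-dimensional image and an $(n-1)$-dimensional kernel, so $\varphi(x)=\psi(x)\cdot\lambda$ for some nonzero $\lambda\in\F_{q^n}$ (spanning the image) and some nonzero $\F_q$-linear functional $\psi:\F_{q^n}\to\F_q$. Since every such functional $\psi$ can be written uniquely as $\psi(x)=Tr(\mu x)=Tr\circ t_\mu(x)$ for some $\mu\in\F_{q^n}^*$ (this uses the nondegeneracy of the trace form $Tr(xy)$ on $\F_{q^n}$ over $\F_q$, a standard fact), I can write $\varphi = t_\lambda\circ Tr\circ t_\mu$. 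Conversely each such composition is manifestly rank $1$, since $Tr\circ t_\mu$ has image $\F_q$ and $t_\lambda$ embeds it back as $\langle\lambda\rangle_q$. Projectively, scaling $\lambda$ or $\mu$ only rescales $\varphi$, so the point $\langle t_\lambda\circ Tr\circ t_\mu\rangle_q$ is well defined, giving the claimed parametrization of $\cS_{n,n}$.

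Next I would identify the two systems $\cR_1$ and $\cR_2$. Fixing $\lambda$ and letting $\alpha$ vary, the set $X(\lambda)=\{\langle t_\alpha\circ Tr\circ t_\lambda\rangle_q:\alpha\in\F_{q^n}^*\}$ consists of all rank-$1$ maps whose image is contained in the line of functionals determined by $t_\lambda$ on the right; because $\alpha\mapsto t_\alpha\circ Tr\circ t_\lambda$ is $\F_q$-linear and injective up to scalars in $\alpha$, this is an $(n-1)$-dimensional projective subspace lying on $\cS_{n,n}$, hence a maximal subspace. Symmetrically, fixing $\lambda$ on the left and varying $\mu=\alpha$ gives $X'(\lambda)$. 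I would check the defining incidence properties of systems from the excerpt: two members $X(\lambda),X(\lambda')$ of the same family are skew (they share no common rank-$1$ map because the right factors $t_\lambda,t_{\lambda'}$ are non-proportional, forcing different kernels), whereas $X(\lambda)\cap X'(\lambda')$ is the single point $\langle t_{\lambda'}\circ Tr\circ t_\lambda\rangle_q$, matching the requirement that spaces of different systems meet in exactly one point. Counting gives $|\cR_1|=|\cR_2|=(q^n-1)/(q-1)$, as $\lambda$ ranges over $\F_{q^n}^*$ modulo scalars, consistent with the listed properties.

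Finally, the statement about $\Omega(\cS_{n,n})$ is immediate from its characterization in $(\ref{form:omega})$: under the isomorphism $\phi_M:\M\to\E$, the condition $\det X=0$ on a matrix corresponds exactly to the associated endomorphism $\varphi_M$ being non-invertible (singular), since $\det X$ is the determinant of the matrix of $\varphi_M$ in the basis $\cal B$. Thus $\Omega(\cS_{n,n})=\{\langle\varphi\rangle_q:\varphi\in\E\setminus\{0\},\ \varphi\text{ not invertible}\}$, which is the claim. I expect the main obstacle to be the careful verification that the trace form $Tr(xy)$ is nondegenerate over $\F_q$, guaranteeing that every $\F_q$-functional arises as $x\mapsto Tr(\mu x)$ for a unique $\mu$; once this is in hand, the factorization of rank-$1$ maps and the identification of the two systems follow by routine linear algebra and the dictionary between matrices and endomorphisms.
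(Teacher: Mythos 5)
Your proposal is correct, but it reaches the key identity $\cS_{n,n}=\{\langle t_\lambda\circ Tr\circ t_\mu\rangle_{q}:\lambda,\mu\in\F_{q^n}^*\}$ by a different route than the paper. You prove surjectivity structurally: a rank-$1$ endomorphism $\varphi$ factors as $\varphi(x)=\psi(x)\lambda$ through its one-dimensional image, and nondegeneracy of the bilinear form $Tr(xy)$ lets you write the functional $\psi$ as $Tr\circ t_\mu$, so every rank-$1$ map is of the stated form. The paper instead verifies only the containment $\{\langle t_\lambda\circ Tr\circ t_\mu\rangle_{q}\}\subseteq\cS_{n,n}$ (by computing $\ker(t_\lambda\circ Tr\circ t_\mu)=\frac 1\mu\,\ker Tr$), determines that $t_\lambda\circ Tr\circ t_\mu=t_{\lambda'}\circ Tr\circ t_{\mu'}$ if and only if $\frac{\lambda}{\lambda'}=\frac{\mu}{\mu'}\in\F_q^*$, counts $\bigl(\frac{q^n-1}{q-1}\bigr)^2$ distinct points, and concludes equality from the known cardinality of $\cS_{n,n}$; it relegates your argument to a footnote, citing \cite[Thm 2.24]{LN}. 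Your approach buys independence from the cardinality formula for $\cS_{n,n}$ (and would survive in settings where counting is unavailable), at the price of invoking trace duality; the paper's counting argument is shorter given the properties of $\cS_{n,n}$ listed just before the proposition. The remaining parts of your proof --- checking that the families $X(\lambda)$ and $X'(\lambda)$ are $(n-1)$-dimensional subspaces on $\cS_{n,n}$ that are pairwise skew within a family and meet in exactly one point across families, and identifying $\Omega(\cS_{n,n})$ with the non-invertible elements via (\ref{form:omega}) --- coincide with what the paper does.
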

\begin{proof}
Note that, for each $\lambda,\mu\in\F_{q^n}^*$, we have
$$ker\, (t_\lambda\circ
Tr\circ t_\mu)=\frac 1\mu\, ker\,Tr,$$  so $dim\,(ker\,
(t_\lambda\circ Tr\circ t_\mu))= n-1$ (i.e., $t_\lambda\circ Tr\circ
t_\mu$ is an element of $\E$ of rank 1) and hence $\langle
t_\lambda\circ Tr\circ t_\mu\rangle_q\in\cS_{n,n}$. Also, for each
$\lambda',\mu'\in\F_{q^n}^*$, $t_\lambda\circ Tr\circ
t_\mu=t_{\lambda'}\circ Tr\circ t_{\mu'}$ if and only if
$\frac\lambda{\lambda'}=\frac\mu{\mu'}\in\F_q^*$. Then direct
computations show that $|\{\langle t_\lambda\circ Tr\circ
t_\mu\rangle_{q}:\
\lambda,\mu\in\F_{q^n}^*\}|=(\frac{q^n-1}{q-1})^2$, and hence
$\cS_{n,n}=\{\langle t_\lambda\circ Tr\circ t_\mu\rangle_{q}:\
\lambda,\mu\in\F_{q^n}^*\}$ $^(\footnote{Alternatively, by \cite[Thm
2.24]{LN} it can be easily seen that the maps $t_\lambda\circ
Tr\circ t_\mu$ are all the $\F_q$--endomorphisms of $\F_{q^n}$ with
rank 1.}^)$.

Also, it is easy to prove that for each $\lambda\in\F_{q^n}^*$, the
sets $X(\lambda)$ and $X'(\lambda)$ are $(n-1)$--dimensional
subspaces of $\P$ contained in $\cS_{n,n}$. Moreover, for each
$\lambda,\mu\in\F_{q^n}^*$, two subspaces $X(\lambda)$ and $X(\mu)$
are either disjoint or equal, and this latter case holds true if and
only if $\frac{\lambda}{\mu}\in\F_q^*$. The same happens for
$X'(\lambda)$ and $X'(\mu)$. This implies that
$|\{X(\lambda): \lambda\in\F_{q^n}^*\}|=|\{X'(\lambda):
\lambda\in\F_{q^n}^*\}| =\frac{q^n-1}{q-1}$. Also,
$X(\lambda)\cap X'(\mu)=\{\langle t_\mu\circ Tr \circ t_\lambda
\rangle_q\}$ is a point. Then $\cR_1=\{X(\lambda):
\lambda\in\F_{q^n}^*\}$ and $\cR_2=\{X'(\lambda):
\lambda\in\F_{q^n}^*\}$ are the systems of $\cS_{n,n}$. Finally, by (\ref{form:omega}) the last
part of the assertion follows.
\end{proof}

\bigskip

For each $\varphi\in\E$, where
$\varphi(x)=\sum_{i=0}^{n-1}\beta_ix^{q^i}$, the {\it conjugate}
$\overline\varphi$ of $\varphi$ is defined by
$\overline{\varphi}(x)=\sum_{i=0}^{n-1}\beta_i^{q^{n-i}}x^{q^{n-i}}$.
Precisely, $\bar\varphi$ is the {\em adjoint} map of  $\varphi$ with
respect to the  non--degenerate bilinear form of $\F_{q^n}$
\begin{equation}\label{form:polarity}
\beta(x,y)=Tr_{q^n/q}(xy).
\end{equation}

\noindent The map
$$
T \colon \varphi \in \E \mapsto \overline{\varphi} \in \E,
$$
is an involutory $\F_q$--linear permutation of $\E$ and
straightforward computations show that
\begin{eqnarray}
&&\overline{\varphi\circ\psi}=\overline{\psi}\circ
\overline{\varphi},\quad\quad
\overline{\varphi^{-1}}=(\overline{\varphi})^{-1}\quad\quad\mbox{
for each $\varphi,\psi\in\E$};\label{form:mapT}\\
&&\overline{t_\lambda}=t_\lambda\quad\quad\mbox{for each
$\lambda\in\F_{q^n}.$}\label{form1:mapT}
\end{eqnarray}
Moreover, it can be easily checked that
$ker\,\varphi=(Im\,\overline{\varphi})^\perp$, where $\perp$ is
the polarity defined by (\ref{form:polarity}), and hence
$dim\,(ker\,\varphi)=dim\,(ker\,\overline{\varphi})$. Then $T$
induces in $\P$ a linear involutory collineation $\Phi_T$
preserving the varieties $\cS_{n,n}$ and $\Omega(\cS_{n,n})$ and
interchanging the systems $\cR_1$ and $\cR_2$ of $\cS_{n,n}$.
Indeed, we have
\begin{equation}\label{form::mapTsyst}
X(\mu)^{\Phi_T}=X'(\mu)\quad\quad\mbox{for each $\mu\in\F_{q^n}^*$}.
\end{equation}
The subgroup $H(\cS_{n,n})$ of $P\Gamma L(n^2,q)$ fixing the systems
$\cR_1$ and $\cR_2$ of $\cS_{n,n}$ is isomorphic to $P\Gamma
L(n,q)\times P\Gamma L(n,q)$, and such a group has index 2 in the
group $Aut(\cS_{n,n})=Aut(\Omega(\cS_{n,n}))\simeq P\Gamma L(n,q)\times P\Gamma L(n,q)\times C_2$ (\cite[Thm. 3]{La2008}). Hence
$Aut(\cS_{n,n})=\langle H(\cS_{n,n}),\Phi_T\rangle$.

\bigskip

\bigskip

\noindent Let $\cI:=\{\langle t_\lambda\rangle_{q}:\, \lambda\in\F_{q^n}^*\}$.
Then $\cI$ is an $(n-1)$--dimensional subspace of $\P$ disjoint from
the variety $\Omega(\cS_{n,n})$ and
$$\cD_1(\cI)=\{\{\langle t_\alpha\circ \varphi\rangle_{q}:\, \alpha\in\F_{q^n}^*\}:\
\varphi\in\E\setminus\{{\mathbf 0}\}\}$$ and
$$\cD_2(\cI)=\{\{\langle \varphi\circ t_\alpha\rangle_{q}:\, \alpha\in\F_{q^n}^*\}:\
\varphi\in\E\setminus\{{\mathbf 0}\}\}$$ are two Desarguesian
spreads of $\P$ (see, e.g., \cite[Example 3 and Theorem
14]{BaLu2011}) such that

\begin{itemize}
\item[$I_1)$] $\cI\in\cD_i(\cI)$ and $\cR_i\subset \cD_i(\cI)$, for each $i\in\{1,2\}$.
\end{itemize}
Also, we explicitly note that
\begin{itemize}
\item[$I_2)$] $\Phi_T$ fixes $\cI$ pointwise and, by (\ref{form::mapTsyst}), $\cD_1(\cI)^{\Phi_T}=\cD_2(\cI)$.
\end{itemize}

\noindent Let $\Pi_{n-1}(\cD_1(\cI))$ be the $\F_q$--linear representation
of the projective space $PG(n-1,q^n)$ defined by the Desarguesian
spread $\cD_1(\cI)$ of $\P$. Let $\Upsilon_1$ be the linear
collineation of $\P$ defined as
$$\Upsilon_1:\ \langle \varphi\rangle_{q}\in\P\mapsto \langle\varphi'\rangle_{q}\in\P,$$
where $\varphi'(x)=\sum_{i=0}^{n-1}a_{i-1}^qx^{q^i}$ if
$\varphi(x)=\sum_{i=0}^{n-1}a_ix^{q^i}$, taking the indices $i$
modulo $n$.
\begin{itemize}
\item[$I_3)$] The collineation $\Upsilon_1$ fixes the Desarguesian spread $\cD_1(\cI)$ and
induces a collineation $\bar \Upsilon_1$ in
$\Pi_{n-1}({\cD_1(\cI)})$ of order $n$ whose fixed point set
consists of the elements of $\cR_1$. Hence, $\cR_1$ turns out to be
a subgeometry of $\Pi_{n-1}({\cD_1(\cI)})$ isomorphic to
$PG(n-1,q)$.
\end{itemize}
We explicitly note that
\begin{equation}\label{form:spannedI}
\cI^{\Upsilon_1^j}=\{\langle x\mapsto \lambda x^{q^j}\rangle_{q}:\
\lambda\in\F_{q^n}^*\} \mbox{\quad and \quad}\langle
\cI,\cI^{\Upsilon_1},\dots,\cI^{\Upsilon_1^{n-1}}\rangle_{q}=\P.
\end{equation}
So $\cI$, in $\Pi_{n-1}({\cD_1(\cI)})$, is a point whose orbit under
the action of the cyclic group $\langle\bar\Upsilon_1\rangle$
has maximum size $n$.

\noindent In the same way,
\begin{itemize}
\item[$I_4)$] the collineation $\Upsilon_2=\Phi_T^{-1}\circ
\Upsilon_1\circ\Phi_T$ fixes the Desarguesian spread $\cD_2(\cI)$
and induces a collineation $\bar \Upsilon_2$ in
$\Pi_{n-1}({\cD_2(\cI)})$ of order $n$ whose set of fixed points
consists of the elements of $\cR_2$.
\end{itemize}
Also,
\begin{itemize}
\item[$I_5)$]
$\cI^{\Upsilon_2^i}=\cI^{\Upsilon_1^{n-i}}$.
\end{itemize}

\bigskip

\noindent Let $\cO_\cI$ be the orbit, under the action of the group
$H(\cS_{n,n})$, of the $(n-1)$--dimensional subspace $\cI$ of $\P$.
A subspace belonging to this orbit will be called a {\it
$\cD$--subspace} of $\P$. In the following we will study the
geometric properties of the $\cD$--subspaces of $\P$ under the
action of $H(\cS_{n,n})$.

\medskip

\begin{theorem}\label{thm:Dsubspace}
Let $X$ be a $\cD$--subspace of $\P=PG(\E,\F_q)=PG(n^2-1,q)$, then
there exist two Desarguesian spreads $\cD_1(X)$ and $\cD_2(X)$ of
$\P$ such that:
\begin{itemize}
\item [$D_1)$] $X\in\cD_i(X)$ and $\cR_i\subset \cD_i(X)$  for each $i=1,2$,
 \item [$D_2)$]  there is a semilinear collineation $\bar\Xi_i$ of $\Pi_{n-1}(\cD_i(X))$ of order $n$
induced by a linear collineation $\Xi_i$ of $\P$ fixing
the Desarguesian spread $\cD_i(X)$. Moreover, $\cR_i=Fix\,\bar\Xi_i$
is a subgeometry of $\Pi_{n-1}(\cD_i(X))$ isomorphic to a $PG(n-1,q)$.
\end{itemize}
Also, there exists an involutory collineation $\Phi$ of $\P$ such
that
\begin{itemize}
\item [$D_3)$] $\Phi$ fixes $X$ pointwise,
\item [$D_4)$] $\cD_1(X)^\Phi=\cD_2(X)$.
\end{itemize}
\end{theorem}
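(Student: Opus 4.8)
The plan is to obtain every assertion by transporting, through a single collineation, the structure already established for the distinguished subspace $\cI$ in properties $I_1)$--$I_5)$. By the definition of a $\cD$--subspace there is an element $h\in H(\cS_{n,n})$ with $X=\cI^h$; fix one such $h$. Since $h$ is a collineation of $\P$, the images $\cD_i(X):=\cD_i(\cI)^h$, $i=1,2$, are again Desarguesian spreads of $\P$. The only place where the hypothesis $X\in\cO_\cI$ genuinely enters is that $h$ lies in $H(\cS_{n,n})$, the subgroup of $P\Gamma L(n^2,q)$ preserving \emph{each} of the two systems of $\cS_{n,n}$; hence $\cR_i^h=\cR_i$ for $i=1,2$, with the indices unchanged. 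Applying $h$ to the relations $\cI\in\cD_i(\cI)$ and $\cR_i\subset\cD_i(\cI)$ of $I_1)$ gives $X=\cI^h\in\cD_i(\cI)^h=\cD_i(X)$ and $\cR_i=\cR_i^h\subset\cD_i(X)$, which is exactly $D_1)$.

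For $D_2)$ I set $\Xi_i:=h^{-1}\Upsilon_i h$, where $\Upsilon_i$ is the linear collineation of $I_3)$--$I_4)$. Because $\Upsilon_i$ is linear and the linear group $PGL(n^2,q)$ is normal in $P\Gamma L(n^2,q)$, the conjugate $\Xi_i$ is again a linear collineation of $\P$, whether or not $h$ is semilinear. Using the action by exponents, $\cD_i(X)^{\Xi_i}=\cD_i(\cI)^{h\cdot h^{-1}\Upsilon_i h}=(\cD_i(\cI)^{\Upsilon_i})^h=\cD_i(\cI)^h=\cD_i(X)$, so $\Xi_i$ fixes the spread $\cD_i(X)$ and therefore induces a collineation $\bar\Xi_i$ of $\Pi_{n-1}(\cD_i(X))$. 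As $h$ maps $\cD_i(\cI)$ onto $\cD_i(X)$, it induces an isomorphism $\Pi_{n-1}(\cD_i(\cI))\to\Pi_{n-1}(\cD_i(X))$ carrying $\bar\Upsilon_i$ to $\bar\Xi_i$; hence $\bar\Xi_i$ is semilinear of the same order $n$, and a point of $\Pi_{n-1}(\cD_i(X))$ is fixed by $\bar\Xi_i$ precisely when it is the $h$--image of a point fixed by $\bar\Upsilon_i$. Since $Fix\,\bar\Upsilon_i=\cR_i$ by $I_3)$--$I_4)$ and $\cR_i^h=\cR_i$, we obtain $Fix\,\bar\Xi_i=\cR_i$; finally the $h$--image of a subgeometry isomorphic to $PG(n-1,q)$ is again such a subgeometry, and this completes $D_2)$.

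It remains to produce $\Phi$. Here I set $\Phi:=h^{-1}\Phi_T h$, where $\Phi_T$ is the involutory linear collineation of $\P$ introduced before $I_1)$. By the same normality argument $\Phi$ is again an involutory linear collineation. By $I_2)$ the collineation $\Phi_T$ fixes $\cI$ pointwise, so for every point $P\in X$, writing $P=Q^h$ with $Q\in\cI$, we get $P^\Phi=(Q^{\Phi_T})^h=Q^h=P$; thus $\Phi$ fixes $X$ pointwise, which is $D_3)$. Moreover $I_2)$ gives $\cD_1(\cI)^{\Phi_T}=\cD_2(\cI)$, whence $\cD_1(X)^\Phi=(\cD_1(\cI)^{\Phi_T})^h=\cD_2(\cI)^h=\cD_2(X)$, establishing $D_4)$.

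The argument is therefore a routine transport of structure, and no serious obstacle arises. The single point demanding care is the bookkeeping of the two systems: one must use that $h\in H(\cS_{n,n})$ fixes $\cR_1$ and $\cR_2$ \emph{individually} (rather than interchanging them, as $\Phi_T$ does), so that the index $i$ is preserved throughout $D_1)$ and $D_2)$. This is exactly the distinction between $H(\cS_{n,n})$ and the full group $Aut(\cS_{n,n})=\langle H(\cS_{n,n}),\Phi_T\rangle$; everything else is a direct conjugation computation once the images $\cD_i(X)$, $\Xi_i$ and $\Phi$ have been named.
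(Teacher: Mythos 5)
Your proof is correct and is essentially the paper's own argument: pick $h\in H(\cS_{n,n})$ carrying $\cI$ to $X$, transport the spreads $\cD_i(\cI)$ and conjugate $\Upsilon_i$ and $\Phi_T$, then invoke $I_1)$--$I_5)$ (the paper writes the conjugates as $g\circ\Upsilon_i\circ g^{-1}$ and $g\circ\Phi_T\circ g^{-1}$, which under its functional convention is the same map as your $h^{-1}\Upsilon_i h$, $h^{-1}\Phi_T h$). The only difference is that you spell out the verifications the paper leaves implicit, including the useful remark that normality of $PGL(n^2,q)$ in $P\Gamma L(n^2,q)$ keeps $\Xi_i$ and $\Phi$ linear even if $h$ is semilinear.
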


\begin{proof}
Let $g$ be an element of $H(\cS_{n,n})$ such that $\cI^g=X$. By
$I_1)$, $\cD_i(X):=\cD_i(\cI)^g$, for each $i\in\{1,2\}$, is a
Desarguesian spread of $\P$ containing $X$ and the system $\cR_i$,
i.e. $D_1)$ is satisfied. Putting $\Xi_i:=g\circ \Upsilon_i\circ
g^{-1}$ and $\Phi:=g\circ \Phi_T\circ g^{-1}$ and taking $I_2)$,
$I_3)$ and $I_4)$ into account, $D_2)$, $D_3)$ and $D_4)$ follow.
\end{proof}

\noindent This allows us to give the following

\begin{defin}\label{def:conjugate}{\rm
Let $X$ be a $\cD$--subspace of $\P$ and let $\Xi_i$ ($i\in\{1,2\}$)
be one of the two collineations of $\P$ described in $D_2)$. Each of
the $\cD$--subspaces $X^{\Xi_i^j}$, with $j\in\{0,1,\dots,n-1\}$, is
said to be a {\em conjugate} of $X$. Note that, by $I_5)$,
$X^{\Xi_2^j}=X^{\Xi_1^{n-j}}$.}
\end{defin}

\begin{remark}  \label{rem:polarline}
{\rm If $n=2$, then $\E=End(\F_{q^2},\F_q)$ and $\cS_{2,2}$ is the
hyperbolic quadric $Q^+(3,q)$ of $\P=PG(\E,\F_q)=PG(3,q)$ defined by
the quadratic form
$$\varphi\in\E\mapsto a^{q+1}-b^{q+1}\in\F_q,$$ where
$\varphi(x)=ax+bx^q$. Hence, the group $H(\cS_{2,2})$ is the
subgroup of the orthogonal group $P\Gamma O^+(4,q)$ fixing the
reguli of $Q^+(3,q)$. Also, the $H(\cS_{2,2})$--orbit of the line
$\cI$, is the set of all external lines to the quadric. Moreover,
the involutory linear collineation $\Upsilon_1$ of $\P$ described
above is
$$\langle x\mapsto ax+bx^q\rangle_{q}\longmapsto\langle x\mapsto b^qx+a^qx^q\rangle_{q}.$$
This means that the conjugate of $\cI$ is the line
$\cI^{\Upsilon_1}=\{\langle x\mapsto \mu x^q\rangle_{q}:\,
\mu\in\F_{q^2}^*\}$, which is the polar line of $\cI$ with respect
to the quadric $Q^+(3,q)$.}
\end{remark}

\subsection{Linear sets and presemifields}

A finite semifield is a finite division algebra which is not
necessarily associative and throughout this paper the term semifield
will always be used to denote a finite semifield (see, e.g.,
\cite{LaPo201*} Chapter 6 for definitions and notations on finite
semifields).  Every field is a semifield and  the term {\it proper
semifield} means a semifield which is not a field. The left nucleus
$\N_l$ and the center  $\K$ of a semifield $\bS$ are fields
contained in $\bS$ as substructures ($\K$ subfield of $\N_l$) and
$\bS$ is a vector space over $\N_l$ and over $\K$. Semifields are
studied up to an equivalence relation called {\it isotopy} and the
dimensions of a semifield over its left nucleus and over its center
are invariant up to isotopy.

Let $\bS$ be a semifield with center $\K$ and left nucleus $\N_l$
and  let $(\F_q,\Ft)$ be a pair of fields such that $\F_q\leq \K$
and  $\Ft \leq \N_l$; then $\bS$ is a finite extension of $\Ft$ and
hence it has size $q^{nt}$ for some integer $n\geq 1$. If $\bS$ is a
proper semifield, then $n\geq 2$. Also, up to isotopy, we may assume that $\bS=(\F_{q^{nt}},+,\star)$, where $$x\star y=\varphi_y(x)$$ with $\varphi_y\in\E=End(\F_{q^{nt}}, \Ft)$.  The set
$$\cC_\bS=\{\varphi_y\,:\, x\in \F_{q^{nt}}\, \mapsto\, x\star y\in \F_{q^{nt}}\,|\,
y\in \F_{q^{nt}}\} \subset \E$$ is the {\it semifield spread set}
associated with $\bS$ ({\it spread set} for short): $\cC_\bS$ is an
$\F_q$-subspace of $\E$ of rank $nt$ and each non-zero element of
$\cC_\bS$ is invertible. Hence, for each pair $(\F_q,\Ft)\subseteq
(\K, \N_l)$, we can associate with $\bS$ the $\F_q$-linear set of
rank $nt$ of the projective space $\P=PG(\E,\Ft)=PG(n^2-1,q^t)$
defined by the non-zero elements of $\cC_\bS$. Such a linear set
turns out to be disjoint from the variety $\Omega(\cS_{n,n}(q^t))$
of $\P$ defined by the non-invertible elements of $\E$. Isotopic
semifields produce in $\P=PG(n^2-1,q^t)$ linear sets which are
equivalent with respect to the action of the group
$H(\cS_{n,n}(q^t))$, and conversely (see \cite{L2003} for $n=2$ and
\cite{La} for  $n\geq 2$). Among all the pairs $(\F_q,F_{q^t})$ such
that $\F_q\subseteq \K$ and $\F_{q^t}\subseteq\N_l$, the pair $(\K,
\N_l)$ has the following properties: $(i)$ maximizes the field of
linearity of the linear set associated with $\bS$, $(ii)$ minimizes
the dimension of the projective space $\P$ in which the linear set
is embedded and $(iii)$ minimizes the group $H(\cS_{n,n})$. For
instance, if $\bS=\F_q$, then $\N_l=\K=\F_q$ and hence the linear
set associated with the field $\F_q$, with respect to the pair
$(\F_q,\F_q)$, is the point $PG(\F_q,\F_q)$; whereas, if $\F_{q'}$
is a subfield of $\F_q$, $q=q'^n$, then the linear set associated
with $\F_q$, with respect to the pair $(\F_{q'},\F_{q'})$, is an
$(n-1)$-dimensional subspace of $\P=PG(n^2-1,q')$ disjoint from the
variety $\Omega(\cS_{n,n}(q'))$ of $\P$, which is a $\cD$-subspace
of $\P$, and conversely (see \cite[Theorem 20]{La2008}). In what
follows, we will call the linear set associated with $\bS$ with
respect to the maximum pair $(\K,\N_l)$, the {\it relevant} linear
set associated with $\bS$.

In the next sections we will characterize, up to the action of the
group $H({\mathcal S}_{n,n})$, the relevant linear sets associated
with some classical semifields: the Generalized Twisted Fields and
the  Knuth semifields 2-dimensional over their left nucleus.

\subsection{Generalized Twisted Fields}
If $\bS$ satisfies all the axioms for a semifield except,
possibly, the existence of the identity element for the
multiplication, then it is a {\it presemifield}.  In such a case
the nuclei and the center of $\bS$ are defined as fields of linear
maps contained   in $End(\bS,\F_p)$ (where $p$ is the
characteristic of $\bS$) (see, e.g., \cite[Theorem 2.2]{MP}) and
all that we stated and defined above for semifields
can be applied to presemifields.\\

 The Generalized Twisted Fields are presemifields
constructed by A.A. Albert in \cite{albert61bis}.  By \cite[Lemma
1]{albert61} a Generalized Twisted Field $\G$ with center of order
$q$, $n$-dimensional over its left nucleus ($n\geq 2$) and
$tn$-dimensional over its center is of type
$\mathbb{G}=(\F_{q^{nt}},+,\star)$ ($q=p^e$, $p$ prime) with

\begin{equation} \label{multi:GTF}
x\star y = yx - cy^{q^m}x^{q^{tl}},
\end{equation}
\noindent
 where $c \in \F_{q^{nt}}^*$, $c \neq
x^{q^{tl}-1}y^{q^m-1}$ for every $x,y \in \F_{q^{nt}}$, and  $1\leq
l\leq n-1$, $1\leq m\leq nt-1$, $m\neq tl$. Since we required
$dim_\K\G=nt$ and $dim_{\N_l}\G=n$, we also have
 $gcd(l,n)=gcd(t,m)=1$.
From the previous conditions we get  $q>2$ and, if $t=1$, then
$n\geq 3$.  In terms of linear maps,  by \cite[Theorem 2.2]{MP} and
by \cite[Lemma 1]{albert61} we can describe the left nucleus and the
center of $\G$ as follows
$$\N_l=\{t_\lambda: \ x\in  \F_{q^{nt}} \,\mapsto \,
\lambda x\in \F_{q^{nt}}\,|\, \lambda \in \Ft\}\subset \E,$$
$$\K=\{t_\lambda: \ x\in  \F_{q^{nt}} \,\mapsto \,  \lambda
x\in \F_{q^{nt}}\,|\, \lambda \in \F_q\}\subset \E,$$ where
$\E=End(\F_{q^{nt}}, \Ft)=V(n^2,\Ft)$. The {\it spread set}
associated with $\G$ is
$$\C=\{\varphi_y: x\in \F_{q^{nt}} \, \mapsto \, x\star y \in \F_{q^{nt}}\, | \, y\in
\F_{q^{nt}}\}\subset \E$$ and it is an $\F_q$-subspace of $\E$ of
dimension $nt$. Hence $\C$ defines an $\F_q$-linear set of rank $nt$
in the projective space $\P=PG(\E,\Ft)=PG(n^2-1,q^t)$; precisely
$$L(\G)=L_\C=\{\langle \varphi_y \rangle_{q^t} \,:\, y\in
\F_{q^{nt}}^*\},$$ which is the relevant linear set associated with
$\G$. Since the nonzero elements of $\C$ are invertible, $L(\G)$ is
disjoint from the variety $\Omega(\cS_{n,n}(q^t))$ of $\P$ defined
by the non-invertible elements of $\E$. By (\ref{multi:GTF}) it is
clear that $L(\G)$ is contained in the subspace
$\Lambda=PG(2n-1,q^t)$ of $\P$ joining the $\cD$-space
$\cI=\{\langle t_\lambda\rangle_{q^t}:\, \lambda\in\F_{q^{nt}}^*\}$
and its conjugate $\cI^{\Upsilon_1^l}=\{\langle x\mapsto \lambda
x^{q^{tl}}\rangle_{q^t}:\, \lambda\in\F_{q^{nt}}^*\}$, precisely
$$\Lambda=\{\langle x \mapsto Ax+Bx^{q^{lt}}\rangle_{q^t} \,:\, A,B\in
\F_{q^{nt}}\}.$$ Note that $\Lambda$ defines a line $PG(1,q^{nt})$
in the $\Ft$-linear representation $\Pi_{n-1}(\cD_1(\cI))$. Also,
since $gcd(t,m)=1$, it is easy to verify that, if $t\geq 2$, then
$L(\G)$ is a maximum scattered $\F_q$-linear set of $\Lambda$ and,
hence, $\Lambda=\langle L(\G) \rangle_{q^t}$.

\begin{prop}\label{prop:linsetGenTwFi}
Let $\mathbb{G}=(\F_{q^{nt}},+,\star)$ be a Generalized Twisted
Field $n$-dimensional over its left nucleus and $tn$-dimensional
over its center. Let $\P=PG(\E,\Ft)=PG(n^2-1,q^t)$ (where
$\E=End(\F_{q^{nt}}, \Ft)$), $\Lambda=\{\langle x \mapsto
Ax+Bx^{q^{lt}}\rangle_{q^t} \,:\, A,B\in \F_{q^{nt}}\}$ and
$\Pi_{n-1}(\cD_1(\cI))$ be the $\F_q$--linear representation of
$PG(n-1,q^{n})$.\\
{\bf (a)} If $t=1$, then  $(a.\,i)$ $L(\G)$ is an $(n-1)$-dimensional subspace
of $\P=PG(n^2-1,q)$  contained in $\Lambda$ and  in the linear
representation $\Pi_{n-1}(\cD_1(\cI))\simeq PG(n-1,q^{n})$; $(a.\,ii)$ $L(\G)$
induces an $\Ft$-linear set of pseudoregulus type with transversal points $\cI$ and  $\cI^{\Upsilon_1^l}$.  \\
{\bf (b)} If $t\geq 2$, then $L(\G)$ is a scattered  $\F_q$-linear
set of rank $tn$ of pseudoregulus
type of $\Lambda$ with $\cI$ and  $\cI^{\Upsilon_1^l}$  as transversal spaces.\\
\end{prop}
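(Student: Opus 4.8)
The plan is to realise $L(\G)$ as one of the linear sets of Theorem \ref{thm:algebraicpseudoregulus} when $t\ge 2$, and as its projective-line analogue of Definition \ref{def:pseudoregulusline} when $t=1$. The common starting point is to split each element of the spread set. Writing $\varphi_y(x)=yx-cy^{q^m}x^{q^{tl}}$, one has $\varphi_y=t_y+g_y$, where $t_y\colon x\mapsto yx$ lies in the $\Ft$-subspace $U_1$ of $\E$ underlying the $\cD$-space $\cI$, and $g_y\colon x\mapsto -cy^{q^m}x^{q^{tl}}$ lies in the subspace $U_2$ underlying the conjugate space $\cI^{\Upsilon_1^l}$. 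I would then define $f\colon U_1\to U_2$ by $f(t_y)=g_y$; this map is additive, and it is bijective because $y\mapsto -cy^{q^m}$ permutes $\F_{q^{nt}}$. Moreover $U_1\cap U_2=\{0\}$, since the monomials $x=x^{q^0}$ and $x^{q^{tl}}$ are distinct for $0<tl<nt$, so $\cI$ and $\cI^{\Upsilon_1^l}$ are disjoint $(n-1)$-dimensional spaces spanning $\Lambda$.

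For part (b) the decisive computation is the companion automorphism of $f$. For $a\in\Ft$ one finds $f(a\,t_y)=g_{ay}\colon x\mapsto -ca^{q^m}y^{q^m}x^{q^{tl}}=a^{q^m}g_y(x)$, so $f$ is $\Ft$-semilinear with companion $\sigma\colon a\mapsto a^{q^m}$, and the hypothesis $\gcd(t,m)=1$ gives $Fix(\sigma)=\F_{q^{\gcd(m,t)}}=\F_q$. Thus $f$ is an admissible semilinear collineation between $T_1=\cI$ and $T_2=\cI^{\Upsilon_1^l}$, and since $\varphi_y=t_y+f(t_y)$ I would conclude $L(\G)=L_{1,f}$ in the notation of Theorem \ref{thm:algebraicpseudoregulus}. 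That theorem then gives at once that, for $t\ge 2$, $L(\G)$ is a scattered $\F_q$-linear set of rank $tn$ of pseudoregulus type with transversal spaces $\cI$ and $\cI^{\Upsilon_1^l}$, proving (b).

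For part (a), where $t=1$ and $\P=PG(n^2-1,q)$, I would first show that $\C$ is an $\F_q$-subspace of $\E$: it is additive in $y$, and for $a\in\F_q$ one has $a^{q^m}=a$, so $\varphi_{ay}=a\varphi_y$. Since $\G$ is a presemifield, $y\mapsto\varphi_y$ is injective, whence $\dim_{\F_q}\C=n$ and $L(\G)=L_\C=PG(\C,\F_q)$ is an $(n-1)$-dimensional subspace; it lies in $\Lambda$ by the shape $x\mapsto Ax+Bx^{q^l}$ of its elements, and $\Lambda=\langle\cI,\cI^{\Upsilon_1^l}\rangle$ is the join of the two spread elements $\cI,\cI^{\Upsilon_1^l}$ of $\cD_1(\cI)$, that is, a line of $\Pi_{n-1}(\cD_1(\cI))\simeq PG(n-1,q^n)$; this is (a.i). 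For (a.ii) I would coordinatise this line by sending the spread element through $\langle x\mapsto Ax+Bx^{q^l}\rangle_q$ to the point $[A:B]$ of $PG(1,q^n)$, so that $\cI=[1:0]$ and $\cI^{\Upsilon_1^l}=[0:1]$; the spread element through $\langle\varphi_y\rangle_q$ is then $[y:-cy^{q^m}]$, and hence $L(\G)$ induces on this line exactly $\{[y:-cy^{q^m}]:y\in\F_{q^n}^*\}=L_{-c,\sigma_m}$, with $\sigma_m\colon x\mapsto x^{q^m}$, which in the sense of Definition \ref{def:pseudoregulusline} is of pseudoregulus type with transversal points $\cI$ and $\cI^{\Upsilon_1^l}$.

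The main obstacle, and the only genuinely delicate step, is pinning down the companion automorphism and hence the field of linearity. Everything rests on reading $\sigma$ off the exponent $m$ and invoking the gcd-hypotheses: in part (b) the automorphism $\sigma\colon a\mapsto a^{q^m}$ acts on $\Ft$, and $\gcd(t,m)=1$ forces $Fix(\sigma)=\F_q$, so the $\F_q$-pseudoregulus conclusion is automatic. In part (a), by contrast, the relevant automorphism $\sigma_m$ acts on the larger field $\F_{q^n}$ of the projected line, so its fixed field is $\F_{q^{\gcd(m,n)}}$ (precisely the right nucleus of $\G$); one should note that the induced line set has field of linearity $\F_q$, as asserted, exactly when $\gcd(m,n)=1$ (in particular whenever $n$ is prime), and in general it is of pseudoregulus type over $\F_{q^{\gcd(m,n)}}$. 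The remaining verifications---disjointness of $\cI$ and $\cI^{\Upsilon_1^l}$, injectivity of $y\mapsto\varphi_y$ from the presemifield axiom, and the additivity and bijectivity of $f$---are routine once the decomposition $\varphi_y=t_y+f(t_y)$ is in place.
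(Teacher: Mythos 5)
Your proof is correct and follows essentially the same route as the paper's: for (b) the paper likewise writes $\C=W_{1,f}$ with $f(t_y)\colon x\mapsto -cy^{q^m}x^{q^{tl}}$, observes that the companion automorphism $\alpha\mapsto\alpha^{q^m}$ of $\Ft$ has fixed field $\F_q$ because $\gcd(t,m)=1$, and invokes Theorem \ref{thm:algebraicpseudoregulus}; for (a) the paper's collineation $\Phi$ of $\Pi_{n-1}(\cD_1(\cI))$ onto $PG(n-1,q^n)$ is exactly your $[A:B]$ chart, giving $L(\G)^\Phi=\{\langle(y,0,\dots,-cy^{q^m},\dots,0)\rangle_{q^n}:\ y\in\F_{q^n}^*\}$ on the line joining $\cI^\Phi$ and $\cI^{\Upsilon_1^l\Phi}$.

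Your closing caveat on $(a.\,ii)$ is a genuine catch, not a defect of your argument, and it is precisely the point the paper glosses over: the paper's proof concludes by invoking Definition \ref{def:pseudoregulusline} without checking that $\sigma_m\colon x\mapsto x^{q^m}$, now acting on $\F_{q^n}$ rather than on $\Ft$, has fixed field $\F_q$. When $t=1$ the constraint $\gcd(t,m)=1$ is vacuous, and the hypotheses (left nucleus $=$ center $=\F_q$) only force $\gcd(l,n)=1$, hence $\gcd(l,m,n)=1$; they do not force $\gcd(m,n)=1$. For instance $n=4$, $t=1$, $l=1$, $m=2$ is admissible, and there the induced set $\{[y:-cy^{q^2}]:\ y\in\F_{q^4}^*\}$ of the line $PG(1,q^4)$ is an $\F_{q^2}$--Baer subline with $q^2+1$ points, not a scattered $\F_q$--linear set of rank $4$ (which would have $q^3+q^2+q+1$ points). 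So statement $(a.\,ii)$ as written is accurate exactly under the extra hypothesis $\gcd(m,n)=1$ that you isolate (equivalently, when the nucleus $\F_{q^{\gcd(m,n)}}$ you identify collapses to the center), and your general conclusion --- pseudoregulus type over $\F_{q^{\gcd(m,n)}}$ --- is the correct one; this restriction is also implicitly present in the converse, since the proof of Theorem \ref{theorem:caraGenTwFie}(a) starts from $\gcd(m,n)=1$. Part (b) is unaffected, because there the companion automorphism acts on $\Ft$ and $\gcd(t,m)=1$ does all the work.
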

\begin{proof}
{\bf (a)} If $t=1$, then $\C$ is an $\F_q$-subspace of
$\E=End(\F_{q^{n}}, \F_q)$; i.e. $L(\G)$ is  just an
$(n-1)$-dimensional subspace of $\P$ contained in $\Lambda$. Note
that the map
$$\Phi: \{\langle t_\lambda\circ \varphi\rangle_{q}:\, \lambda\in\F_{q^n}^*\} \in \cD_1(\cI) \, \mapsto \, \langle(a_0,a_1,\dots,a_{n-1})\rangle_{q^n} \in PG(n-1,q^n)$$
where $\varphi(x)=\sum_{i=0}^{n-1}a_ix^{q^i}$ is a linear
collineation between $\Pi_{n-1}(\cD_1(\cI))$ and $PG(n-1,q^n)$ such
that $\cI^\Phi=\langle(1,0,\dots,0)\rangle_{q^n}$,
$\cI^{{\Upsilon_1^l}\Phi}=\langle(0,\dots,1,\dots,0)\rangle_{q^n}$
and $\Lambda^\Phi$ is the line of $PG(n-1,q^n)$ with equations
$x_i=0$ for $i\neq 0,l$. Also $L(\G)^\Phi=\{\langle
(y,0,0,\dots,-cy^{q^m},0,\dots,0)\rangle_{q^n} \,:\, y\in
\F_{q^{n}}^*\}$ is an $\F_q$-linear set contained in the line
$\Lambda^\Phi$ of $PG(n-1,q^{n})$. By Definition
\ref{def:pseudoregulusline}, $L(\G)^\Phi$ is a maximum scattered
$\F_q$-linear set of pseudoregulus  type of $\Lambda^\Phi$, with
transversal points $\cI^\Phi$ and
$\cI^{\Upsilon_1^l \Phi}$. This proves {\bf (a)}.\\
{\bf (b)} If $t\geq 2$, then  the collineation

$$\Phi_f : \langle t_y: x \mapsto yx\rangle_{q^t} \in \cI \,\mapsto \, \langle f(t_y): x \mapsto-cy^{q^m}x^{q^{lt}}\rangle_{q^t} \in
\cI^{\Upsilon_1^l},$$ \noindent
 is a semilinear collineation between $\cI$
and $\cI^{\Upsilon_1^l}$
 with companion automorphism $\sigma:
\alpha \in \Ft \mapsto\alpha^{q^m}\in \Ft$ and, since $gcd(t,m)=1$,
$Fix (\sigma)=\F_q$. Hence, by Theorem
\ref{thm:algebraicpseudoregulus}, $L_{W_{1,f}}$ is an $\F_q$-linear
set of $\Lambda$ of pseudoregulus type with transversal spaces $\cI$
and $\cI^{\Upsilon_1^l}$, and since

$$W_{1,f}=\{t_y+f(t_y) \,:\, y\in \F_{q^{nt}}\}=\{x\, \mapsto\, yx-cy^{q^m}x^{q^{tl}}\,|\, y\in \F_{q^{nt}}
\}=\C,$$ \noindent Case {\bf (b)} follows.
\end{proof}

Now, we will prove that the properties of $L(\G)$ described in
 Proposition
\ref{prop:linsetGenTwFi} completely characterize, up to isotopy, the
Generalized Twisted Fields.

\begin{theorem} \label{theorem:caraGenTwFie}
Let $\bS$ be a presemifield of order $q^{nt}$ with
 $\F_q$ contained in its center and $\Ft$ contained in its left nucleus and let $L(\bS)$ be the
associated linear set with respect to the pair $(\F_q,\Ft)$. Also,
assume that $L(\bS)$ is contained in a $(2n-1)$-dimensional subspace
of $\P=PG(n^2-1,q^t)$ joining two conjugated $\cD$-spaces $X$ and $X'$ of $\P$.  Suppose that either Case (a) or Case (b) below holds:\\
{\bf (a)} $t=1$ and  $L(\bS)$ induces, in the linear representation
$\Pi_{n-1}(\cD_1(X))\simeq PG(n-1,q^{n})$, an $\F_q$-linear set of
pseudoregulus type of the line $PG(1,q^n)$ of $\Pi_{n-1}(\cD_1(X))$
joining the points $X$ and  $X'$, with transversal points $X$ and  $X'$;  \\
{\bf (b)} $t\geq 2$ and  $L(\bS)$ is a maximum scattered
$\F_q$-linear set  of pseudoregulus type of $\langle
X,X'\rangle_{q^t}$ with $X$ and  $X'$  as transversal spaces; \\then
$\bS$ is isotopic to a Generalized Twisted Field.
\end{theorem}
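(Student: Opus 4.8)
The plan is to reverse the construction of Proposition~\ref{prop:linsetGenTwFi}, recovering the multiplication of $\bS$ and displaying it, up to isotopy, in the form (\ref{multi:GTF}). Throughout I use that isotopy of presemifields corresponds to $H(\cS_{n,n}(q^t))$-equivalence of the associated linear sets. Since $H(\cS_{n,n}(q^t))$ is transitive on $\cD$-subspaces and carries conjugates to conjugates (Theorem~\ref{thm:Dsubspace}, Definition~\ref{def:conjugate}), I may replace $\bS$ by an isotope so that $X=\cI$ and $X'=\cI^{\Upsilon_1^l}$ for some $l$. Then $L(\bS)\subseteq\Lambda=\langle\cI,\cI^{\Upsilon_1^l}\rangle$, so by (\ref{form:spannedI}) the spread set consists of maps $\varphi_y\colon x\mapsto A_yx+B_yx^{q^{tl}}$. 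As $L(\bS)$ is disjoint from the transversal spaces $\cI$ (where $B=0$) and $\cI^{\Upsilon_1^l}$ (where $A=0$), the additive maps $y\mapsto A_y$ and $y\mapsto B_y$ are bijective; reparametrizing the right argument $y$ (an isotopy) so that $A_y=y$, I get $\cC_\bS=\{x\mapsto yx+g(y)x^{q^{tl}}:y\in\F_{q^{nt}}\}$ for some $\F_q$-linear bijection $g$. This already has the shape of a twisted-field spread set; the whole problem is to force $g$ to be a monomial.

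For case (b) ($t\ge2$), being of pseudoregulus type with transversals $\cI,\cI^{\Upsilon_1^l}$ gives, by Theorem~\ref{theorem:geomPseuAlegpseudo}, that $L(\bS)=L_{\rho,f}$ for a semilinear map $f$ of companion $\sigma$ with $Fix(\sigma)=\F_q$. Both $\cC_\bS$ and the subspace $W_{\rho,f}$ are graphs over the $A$-coordinate and define the same scattered linear set, so matching the representative with a given $A$-value yields $g(\alpha)=\rho\,\beta_\alpha^{\,\sigma-1}\tilde f(\alpha)$ for some $\beta_\alpha\in\F_{q^t}^*$, where $\tilde f$ is the coordinate map of $f$. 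Substituting this into the additivity of $g$ and $\tilde f$, and using that $n\ge2$ (so that for each $\alpha$ there is $\alpha'$ with $\tilde f(\alpha),\tilde f(\alpha')$ $\F_{q^t}$-independent), forces $\beta_\alpha^{\,\sigma-1}$ to be constant. Hence $g$ is a scalar multiple of $\tilde f$, so $g$ itself is $\F_{q^t}$-semilinear with companion $\sigma$. Writing $\sigma=(\cdot)^{q^s}$ on $\F_{q^t}$ with $\gcd(s,t)=1$, this means $g=\tilde L\circ(\cdot)^{q^s}$ with $\tilde L$ an invertible $\F_{q^t}$-linear map of $\F_{q^{nt}}$, i.e. $g(\alpha)=\sum_{k=0}^{n-1}c_k\alpha^{q^{s+tk}}$.

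For case (a) ($t=1$), $L(\bS)$ is an $(n-1)$-subspace inducing a pseudoregulus-type linear set on the line $PG(1,q^n)$ of $\Pi_{n-1}(\cD_1(X))$ joining $X$ and $X'$. By Definition~\ref{def:pseudoregulusline} such a set is already of monomial type, and by Proposition~\ref{prop:unictraspoints} its transversal points are unique; the equivalence of line pseudoreguli (Remark~\ref{rema:baersubline}) is induced by a collineation of the line that, through the Desarguesian spread $\cD_1(X)$, is realized by an isotopy. This brings $\cC_\bS$ to $\{x\mapsto yx-cy^{q^m}x^{q^l}\}$ with $\gcd(m,n)=1$, which is (\ref{multi:GTF}); so in this case $\bS$ is isotopic to a Generalized Twisted Field directly.

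The main obstacle is the remaining reduction in case (b): showing that the semilinear $g$ must be a single monomial, equivalently that $\tilde L$ is a scalar (times a power of the $\F_{q^t}$-Frobenius). This cannot be achieved by a further isotopy, since any element of $H(\cS_{n,n}(q^t))$ mapping a Generalized Twisted Field spread set of $\Lambda$ onto $\cC_\bS$ must fix $\Lambda$ (both spread sets span it) and is therefore of scalar type $\varphi\mapsto t_a\circ\varphi\circ t_b$, which only rescales the coefficients $c_k$ and cannot annihilate a term. The monomiality must thus come from the defining property of a spread set, namely that every nonzero $x\mapsto\alpha x+g(\alpha)x^{q^{tl}}$ is invertible, i.e. $L(\bS)\cap\Omega(\cS_{n,n}(q^t))=\emptyset$. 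Translating invertibility into the requirement that $-\alpha/g(\alpha)$ never be a $(q^{tl}-1)$-th power and analysing the norm of $g(\alpha)$, the cross terms arising when $g$ has more than one summand make the forbidden value occur for some $\alpha$; excluding this---precisely where the geometry of the determinantal hypersurface $\cS_{n,n}$ developed in Section~\ref{sec:SegreVariety} is essential---forces $g(\alpha)=-c\alpha^{q^m}$ with $\gcd(m,t)=1$ and $c$ satisfying Albert's nondegeneracy condition. Then $x\star y=yx-cy^{q^m}x^{q^{tl}}$, so $\bS$ is isotopic to a Generalized Twisted Field.
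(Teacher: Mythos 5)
Your reduction to $X=\cI$, $X'=\cI^{\Upsilon_1^l}$, your case (a), and the first half of your case (b) run parallel to the paper's proof: in case (b) you correctly conclude (by the graph/constancy-of-$\beta_\alpha$ argument, which is essentially the computation in the paper's Theorem \ref{theorem:ProjEquiv}) that the spread set is $\cC_\bS=\{x\mapsto yx+g(y)x^{q^{tl}}:\ y\in\F_{q^{nt}}\}$ with $g$ additive, bijective and $\F_{q^t}$-semilinear with companion $\sigma=(\cdot)^{q^s}$, $\gcd(s,t)=1$, hence $g(y)=\sum_{k=0}^{n-1}c_ky^{q^{s+tk}}$. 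The gap is everything after that. The whole theorem has now been reduced to showing that exactly one $c_k$ is nonzero, and your proposal does not prove this: the sentence ``the cross terms arising when $g$ has more than one summand make the forbidden value occur for some $\alpha$'' is a restatement of the claim, not an argument. Concretely, what you need is: for every invertible semilinear $g$ with at least two nonzero coefficients, there exists $y\neq 0$ with $N_{q^{nt}/q^t}(g(y))=N_{q^{nt}/q^t}(-y)$, i.e.\ $L_{1,g}$ meets $\Omega(\cS_{n,n}(q^t))$. This is a nontrivial existence statement about values of norm forms --- already for $n=2$ it amounts to proving that a certain translate of the norm-one circle of $\F_{q^{nt}}/\F_{q^t}$ must meet a norm hypersurface --- and it would require a counting or character-sum estimate (or a clever algebraic identity); nothing in your proposal, and no result in Section \ref{sec:SegreVariety}, supplies it. So the proof stalls exactly at the crux of case (b).

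For comparison, the paper takes a different (and much shorter) route at this point and never invokes disjointness from $\Omega(\cS_{n,n}(q^t))$ in case (b): from $\cC=\{t_y+\rho f(t_y)\}$ with $\Phi_f$ a semilinear collineation $\cI\to\cI^{\Upsilon_1^l}$ of companion $\sigma$, it passes directly to $f(t_y):x\mapsto \eta y^{q^m}x^{q^{tl}}$ with $\gcd(m,t)=1$ --- in effect reading the semilinearity through the rank-one $\F_{q^{nt}}$-structure that the two $\cD$-spaces carry, under which a semilinear map is automatically a monomial. Your observation that $\F_{q^t}$-semilinearity by itself leaves $n$ free coefficients is correct as a statement about semilinear maps (Theorem \ref{thm:algebraicpseudoregulus} indeed produces pseudoregulus-type linear sets with transversals $\cI$, $\cI^{\Upsilon_1^l}$ from any such map), so you have put your finger on precisely the step that the paper asserts without further justification; but having raised the issue you are obliged to resolve it, and the proposal does not. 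Case (a), on the other hand, is fine and agrees with the paper: there Definition \ref{def:pseudoregulusline} hands you the monomial form directly, with no analogue of this difficulty.
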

\begin{proof} Without loss of generality we may assume that
$\bS=(\F_{q^{nt}},+,\ast)$ with $\F_q$ contained in $\K$ and $\Ft$
contained in $\N_l$. Let $\E=End(\F_{q^{nt}},\F_q)$ and let
$\C=\{\varphi_y: x\in \F_{q^{nt}} \, \mapsto\, x\ast y \in
\F_{q^{nt}}\, | \, y\in \F_{q^{nt}}\}$ be the spread set defined by
$\bS$. Note that describing $\bS$ corresponds, up to isotopy, to
describing the associated linear set $L(\bS)=L_\C$ in the projective
space $\P=PG(\E,\Ft)=PG(n^2-1,q^t)$, up to the action of the group
$H(\cS_{n,n}(q^t))$. Since all the $\cD$-spaces of $\P$ belong to
the same $H(\cS_{n,n}(q^t))$-orbit, we may assume, up to isotopy,
that $X=\cI$, so
$X'=\cI^{\Upsilon_1^l}$ for some $l\in \{1,\dots,n-1\}$ (see (\ref{form:spannedI})). \\
{\bf (a)} By Definition \ref{def:pseudoregulusline} $L(\bS)=L_{\rho,
\tau}$ where $\rho\in \F_{q^n}^*$ and $\tau: x \mapsto x^{q^m}$ is
an automorphism of $\F_{q^n}$ such that $gcd(m,n)=1$. This implies
that
$$\C=\{\varphi_y: x\in \F_{q^{n}}
\, \mapsto\, xy+\rho y^{q^m}x^{q^{l}} \in \F_{q^{n}}\, | \, y\in
\F_{q^{n}}\}.$$ Hence $x \ast y=xy-cy^{q^m}x^{q^{l}}$ where
$c=-\rho$, i.e. $\bS$, up to isotopy, is a Generalized Twisted
Field.
 \noindent {\bf (b)} By Theorem
\ref{theorem:geomPseuAlegpseudo}, $L(\bS)$ is of type $L_{\rho,f}$
with transversal spaces $\cI$ and $\cI^{\Upsilon_1^l}$. Hence,
there exist a semilinear collineation
$$\Phi_f: \langle t_y \rangle_{q^t} \in \cI \,\mapsto \langle f(t_y) \rangle_{q^t} \in \cI^{\Upsilon_1^l} $$
with companion automorphism $\sigma\in Aut(\F_{q^t})$ such that
$Fix(\sigma)=\F_q$ and an element $\rho \in \F_{q^{nt}}^*$ such that
$$\C=\{t_y+\rho f(t_y)\,:\, y\in \F_{q^{nt}}\}.$$
This implies that
$$f(t_y): x\mapsto\eta y^{q^m}x^{q^{tl}},$$
where $\eta \in \F_{q^{nt}}^*$, $1\leq m\leq nt-1$ and
$gcd(t,m)=1$. Hence, putting $c=-\eta\rho$, we have
$$\C=\{\varphi_y: x\in \F_{q^{nt}} \, \mapsto\,
xy-cy^{q^m}x^{q^{tl}} \in \F_{q^{nt}}\, | \, y\in \F_{q^{nt}}\},$$
this means that  $x\ast y=xy-cy^{q^m}x^{q^{tl}}$, and hence $\bS$,
up to isotopy, is a Generalized Twisted Field.
\end{proof}

\bigskip
\noindent Note that if $n=2$ then $t\geq 2$ and by Remark \ref{rem:polarline},
we can restate Theorem \ref{theorem:caraGenTwFie} as follows, which
is a generalization of \cite[Theorems 4.3, 3.7]{CPT} and
\cite[Theorems 4.12, 4.13]{MPT}.

\begin{cor}\label{cor:caract2dimGTF}
Let $\bS$ be a presemifield of order $q^{2t}$ with center $\F_q$ and
left nucleus $\F_{q^t}$. If $L(\bS)$ is an $\F_q$--linear set of
$PG(3,q^t)$ of pseudoregulus type with transversal lines external to
the quadric $\cS_{2,2}=Q^+(3,q^t)$ pairwise polar with respect to
the polarity defined by $Q^+(3,q^t)$, then $\bS$ is isotopic to a
Generalized Twisted Field.
\end{cor}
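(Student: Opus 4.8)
The plan is to recognize this corollary as the specialization of Theorem \ref{theorem:caraGenTwFie} to the case $n=2$, reading the abstract hypotheses through the geometric dictionary supplied by Remark \ref{rem:polarline}. No genuinely new argument is required; the content lies entirely in verifying that the two sets of hypotheses coincide.

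First I would observe that for $n=2$ the ambient space is $\P=PG(n^2-1,q^t)=PG(3,q^t)$, so the $(2n-1)$-dimensional subspace $PG(2n-1,q^t)=PG(3,q^t)$ appearing in Theorem \ref{theorem:caraGenTwFie} is all of $\P$. Consequently the requirement that $L(\bS)$ lie in the span of the two conjugate $\cD$-spaces is automatic, and only the pseudoregulus condition carries information. I would also note that a proper presemifield of order $q^{2t}$ with center $\F_q$ forces $t\geq 2$: otherwise $\bS$ would be a $2$-dimensional algebra over its center and hence a field. Thus we are necessarily in Case (b) of the theorem.

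Next I would invoke Remark \ref{rem:polarline} to translate the terminology. When $n=2$ the Segre variety $\cS_{2,2}$ is the hyperbolic quadric $Q^+(3,q^t)$ of $PG(3,q^t)$; a $\cD$-subspace of $\P$ is precisely a line external to $Q^+(3,q^t)$; and the conjugate of such a line is its polar line with respect to the polarity defined by the quadric. Under this dictionary, the phrase ``two conjugated $\cD$-spaces $X$ and $X'$'' in Theorem \ref{theorem:caraGenTwFie} becomes exactly ``two external lines to $Q^+(3,q^t)$ that are polar to one another,'' which is the hypothesis of the corollary. Moreover, since a linear set of pseudoregulus type in $PG(3,q^t)$ has rank $2t=rt/2$ (with $r=4$), it is automatically maximum scattered, so the pseudoregulus assumption of the corollary matches Case (b) verbatim.

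With the hypotheses thus identified, Theorem \ref{theorem:caraGenTwFie}(b) applies directly and yields that $\bS$ is isotopic to a Generalized Twisted Field, completing the proof. The only point requiring care---and the nearest thing to an obstacle---is confirming that Remark \ref{rem:polarline} genuinely identifies the $H(\cS_{2,2})$-orbit of $\cI$ with the set of all external lines to $Q^+(3,q^t)$, and each such line's conjugate with its polar line; once this correspondence is established, the corollary is immediate.
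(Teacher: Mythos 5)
Your proposal is correct and takes essentially the same route as the paper, which gives no separate proof but introduces the corollary with the words ``if $n=2$ then $t\geq 2$ and by Remark \ref{rem:polarline}, we can restate Theorem \ref{theorem:caraGenTwFie} as follows'' --- i.e.\ exactly your specialization to $n=2$ read through the dictionary identifying $\cD$-subspaces with external lines and conjugation with the polarity of $Q^+(3,q^t)$. The extra checks you spell out (that $t\geq 2$ is forced, that the span condition is automatic, and that pseudoregulus type in $PG(3,q^t)$ is automatically maximum scattered) are precisely the verifications the paper leaves implicit.
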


\subsection{2--dimensional Knuth Semifields}
The Knuth semifields 2--dimensional over the left nucleus and
$2t$--dimensional ($t\geq 2$) over the center $\F_q$ are the
following:

$\K_{17}=(\F_{q^t}\times \F_{q^t} ,+,\ast)$ and
$\K_{19}=(\F_{q^t}\times \F_{q^t} ,+,\star)$ (see \cite[p. 241
(Multiplications $(17)$ and $(19)$)]{d}), with
$$(u,v)\ast (x,y) = (u,v)\begin{pmatrix} x & y\\
fy^{\sigma} & x^{\sigma}+y^{\sigma}g\end{pmatrix}$$ and
$$(u,v)\star (x,y) = (u,v)\begin{pmatrix} x & y\\
fy^{\sigma^{-1}} & x^{\sigma}+yg\end{pmatrix},$$ where $\sigma\in
Aut(\F_{q^t})$, $Fix\,\sigma=\F_q$, and $f$ and $g$ are non--zero
elements in $\F_{q^t}$ such that the polynomial $x^{q+1}+gx-f$ has
no root in $\F_{q^t}$.

\noindent The spread sets (of matrices) associated with $\K_{17}$
and $\K_{19}$ are
$$\C_{17}=\Big\{\begin{pmatrix} x & y\\
fy^{\sigma} &
x^{\sigma}+y^{\sigma}g\end{pmatrix}\,:\,x,y\in\F_{q^t}\Big\}\subset
\M$$ and
$$\C_{19}=\Big\{\begin{pmatrix} x & y\\
fy^{\sigma^{-1}} &
x^{\sigma}+yg\end{pmatrix}\,:\,x,y\in\F_{q^t}\Big\}\subset \M,$$
respectively, where $\M=\M(2,q^t)$ is the vector space of the
$2\times 2$--matrices over $\F_{q^t}$.

The sets $\C_{17}$ and $\C_{19}$ are $\F_q$-subspaces of $\M$ of
dimension $2t$ and hence they define  $\F_q$-linear sets of rank
$2t$ in the projective space $\P=PG(\M,\Ft)=PG(3,q^t)$. Precisely,
using the coordinatization $\begin{pmatrix} x_0 & x_1\\
x_2 & x_3\end{pmatrix}\mapsto (x_0,x_1,x_2,x_3)$,
\begin{equation}\label{LK1}
L(\K_{17})=\{\langle(x,y,fy^{\sigma},x^{\sigma}+gy^{\sigma})\rangle_{q^t}\colon
x,y\in \F_{q^t},\,(x,y)\ne(0,0)\}\end{equation} and
\begin{equation}\label{LK1}
L(\K_{19})=\{\langle(x,y,fy^{\sigma^{-1}},x^{\sigma}+gy)\rangle_{q^t}\colon
x,y\in \F_{q^t},\,(x,y)\ne(0,0)\}\end{equation} are the relevant
linear sets associated with the semifields $\K_{17}$ and $\K_{19}$,
respectively.

\noindent Recall that $L(\K_{17})$ and $L(\K_{19})$ are disjoint
from the hyperbolic quadric $Q^+(3,q^t)$ of $\P$ defined by the
non--invertible matrices of $\M$. Let $\cR_1$ be the regulus of
$Q^+(3,q^t)$ containing the line $x_2=x_3=0$ and let $\cR_2$ be the
opposite one.

\begin{remark}\label{rem:transpKnuth}
\rm Note that the collineation $\Phi_T$ of $\P$ defined by the
transpose operation on matrices fixes the quadric $Q^+(3,q^t)$ and
interchanges the reguli $\cR_1$ and $\cR_2$ and
$\Phi_T(L(\K_{17}(\sigma,f,g)))=L(\K_{19}(\sigma,\frac
1{f^{\sigma^{-1}}},\frac gf))$. In other words, the family $\K_{19}$
is the transpose family of $\K_{17}$ (see \cite[Section 5]{knuth})
\end{remark}

\begin{prop}\label{prop:knuthlinearset}
(1) $L(\K_{17})$ is an $\F_q$--linear set of $\P=PG(3,q^t)$ of
pseudoregulus type, whose transversal lines belong to $\cR_1$.

(2) $L(\K_{19})$ is an $\F_q$--linear set of $\P=PG(3,q^t)$ of
pseudoregulus type, whose transversal lines belong to $\cR_2$.
\end{prop}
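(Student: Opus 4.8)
The plan is to realize each Knuth linear set directly in the algebraic form $L_{\rho,h}$ of Theorem \ref{thm:algebraicpseudoregulus}, choosing the decomposition $\M=U_1\oplus U_2$ so that the two summands are exactly lines of the prescribed regulus (I write the semilinear map as $h$ to avoid clashing with the Knuth parameter $f$).

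For (1), I would split each matrix of $\C_{17}$ into its top row and its bottom row. Setting $U_1=\{\begin{pmatrix} x & y\\ 0 & 0\end{pmatrix}:x,y\in\F_{q^t}\}$ and $U_2=\{\begin{pmatrix} 0 & 0\\ a & b\end{pmatrix}:a,b\in\F_{q^t}\}$, the projective lines $T_1=PG(U_1,\F_{q^t})$ and $T_2=PG(U_2,\F_{q^t})$ are the lines $x_2=x_3=0$ and $x_0=x_1=0$; since both consist of rank--one matrices with a fixed left factor ($\begin{pmatrix}1\\0\end{pmatrix}$ and $\begin{pmatrix}0\\1\end{pmatrix}$ respectively), they are two skew lines of $\cR_1$, and $\langle T_1,T_2\rangle_{q^t}=\P$. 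I would then define $h:U_1\to U_2$ by $\begin{pmatrix} x & y\\ 0 & 0\end{pmatrix}\mapsto\begin{pmatrix} 0 & 0\\ fy^{\sigma} & x^{\sigma}+gy^{\sigma}\end{pmatrix}$ and check two routine facts: $h$ is $\sigma$--semilinear (scaling $(x,y)$ by $\lambda$ multiplies the image by $\lambda^{\sigma}$), and $h$ is invertible (using $f\neq0$, one recovers $y$ from the $(2,1)$--entry and then $x$ from the $(2,2)$--entry). As $Fix(\sigma)=\F_q$ and, by construction, $\C_{17}=\{u+h(u):u\in U_1\}$, Theorem \ref{thm:algebraicpseudoregulus} applies with $\rho=1$ and yields that $L(\K_{17})=L_{1,h}$ is of pseudoregulus type with transversal lines $T_1,T_2\in\cR_1$.

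For (2), I would invoke Remark \ref{rem:transpKnuth}, which gives $\Phi_T(L(\K_{17}(\sigma,f,g)))=L(\K_{19}(\sigma,1/f^{\sigma^{-1}},g/f))$, together with the fact recorded there that $\Phi_T$ is a collineation of $\P$ interchanging $\cR_1$ and $\cR_2$. Since being of pseudoregulus type, the weights of lines, disjointness, and the transversal spaces are all preserved by any collineation, the image under $\Phi_T$ of the linear set from (1) is again of pseudoregulus type, now with transversal lines in $\Phi_T(\cR_1)=\cR_2$. As $(\sigma,f,g)$ runs over all admissible parameters, the assignment $(\sigma,f,g)\mapsto(\sigma,1/f^{\sigma^{-1}},g/f)$ is a bijection between the $\K_{17}$--family and the $\K_{19}$--family, so every $L(\K_{19})$ arises in this way, proving (2). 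Alternatively, (2) can be proved directly, exactly as (1), using the column decomposition $U_1=\{\begin{pmatrix} a & 0\\ b & 0\end{pmatrix}\}$ and $U_2=\{\begin{pmatrix} 0 & c\\ 0 & d\end{pmatrix}\}$, which are two skew lines of $\cR_2$ (fixed right factors $(1,0)$ and $(0,1)$), together with the $\sigma$--semilinear map $\begin{pmatrix} a & 0\\ b & 0\end{pmatrix}\mapsto\begin{pmatrix} 0 & (b/f)^{\sigma}\\ 0 & a^{\sigma}+g(b/f)^{\sigma}\end{pmatrix}$, for which one checks that $u+h(u)$ reproduces $\C_{19}$.

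The computations (semilinearity and invertibility of $h$) are immediate, so the substance of the argument is the choice of decomposition placing the transversal lines in the right regulus. The only point requiring care is the bookkeeping in (2): that the transpose parameter map is a genuine bijection of the two Knuth families, so that part (1) indeed covers every instance of $L(\K_{19})$; I would also remark that for $t=2$ the transversal lines need not be unique (Remark \ref{rema:Subgeometry}), but the explicit pair $T_1,T_2$ exhibited above still lies in the required regulus, so the statement holds in that case as well.
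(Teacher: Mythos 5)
Your proposal is correct and follows essentially the same route as the paper: for (1) the same row decomposition with the same $\sigma$--semilinear map $(x,y,0,0)\mapsto(0,0,fy^{\sigma},x^{\sigma}+gy^{\sigma})$ fed into Theorem \ref{thm:algebraicpseudoregulus}, and for (2) the same appeal to the transpose collineation of Remark \ref{rem:transpKnuth}. Your extra bookkeeping (the bijection of the two Knuth parameter families, the explicit check that the two skew lines of rank--one matrices lie in $\cR_1$, and the alternative column-decomposition proof of (2)) is sound but not needed beyond what the paper records.
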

\begin{proof}
Let $r$ and $r'$ be the lines of $\cR_1$ with equations $r:\,
x_2=x_3=0$ and $r':\, x_0=x_1=0$. Then the map $$f:\
(x,y,0,0)\mapsto (0,0,f y^\sigma, x^\sigma+gy^\sigma)$$ induces a semilinear collineation $\Phi_f$ between $r$ and $r'$
having $\sigma$ as a companion automorphism. Then by Theorem
\ref{thm:algebraicpseudoregulus}, $L_{1,f}$ is an $\F_q$-linear set
of pseudoregulus type. Since $L_{1,f}=L(\K_{17})$ we get $(1)$.

\noindent Case $(2)$ follows from Remark \ref{rem:transpKnuth} and Case
$(1)$.
\end{proof}

In the next theorem we prove that the descriptions of $L(\K_{17})$
and $L(\K_{19})$ given in Proposition \ref{prop:knuthlinearset}
characterize the semifields $\K_{17}$ and $\K_{19}$ up to isotopism,
generalizing some results contained in \cite{CPT} and \cite{MPT} for
$t=2$.

\begin{theorem} \label{theorem:caraKnuth}
Let $\bS$ be a presemifield of order $q^{2t}$ with
 $\F_q$ contained in its center and $\Ft$ contained in its left nucleus and let $L(\bS)$ be the
associated linear set with respect to the pair $(\F_q,\Ft)$. If
$L(\bS)$ is an $\F_q$-linear set  of pseudoregulus type of
$\P=PG(3,q^t)$ with associated transversal lines $r$ and $r'$
contained in $Q^+(3,q^t)$, then $\bS$ is isotopic to a Knuth
semifield $\K_{17}$ or $\K_{19}$. Precisely, if $r,r'\in\cR_1$, then
$\bS$ is isotopic to a semifield $\K_{17}$, whereas, if
$r,r'\in\cR_2$, then $\bS$ is isotopic to a semifield $\K_{19}$.
\end{theorem}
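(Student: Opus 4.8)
The plan is to follow the pattern of Theorem~\ref{theorem:caraGenTwFie}, using the two facts already available: isotopy of presemifields corresponds exactly to $H(\cS_{2,2}(q^t))$--equivalence of the associated linear sets, and by Theorem~\ref{theorem:geomPseuAlegpseudo} every $\F_q$--linear set of pseudoregulus type of $PG(3,q^t)$ is of the algebraic form $L_{\rho,f}$ of Theorem~\ref{thm:algebraicpseudoregulus}. As usual I may assume $\bS=(\F_{q^{2t}},+,\ast)$ with spread set $\C\subset\M=\M(2,q^t)$, so that classifying $\bS$ up to isotopy is the same as classifying $L(\bS)=L_\C$ up to the action of $H(\cS_{2,2}(q^t))$.

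First I would check that the two alternatives $r,r'\in\cR_1$ and $r,r'\in\cR_2$ are exhaustive and that it suffices to treat the first. Since $L(\bS)$ spans $\P$, its transversal lines are disjoint (the remark following Definition~\ref{def:pseud}); but two generators of $Q^+(3,q^t)$ are disjoint precisely when they belong to the same regulus, so $r,r'$ lie in a common system. For the reduction, recall that $H(\cS_{2,2}(q^t))$ is normal of index $2$ in $Aut(\cS_{2,2}(q^t))=\langle H(\cS_{2,2}(q^t)),\Phi_T\rangle$ and that $\Phi_T$ interchanges $\cR_1$ and $\cR_2$ while sending a $\K_{17}$--linear set to a $\K_{19}$--linear set (Remark~\ref{rem:transpKnuth}). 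Hence if $r,r'\in\cR_2$ I would apply $\Phi_T$ to move the transversal lines into $\cR_1$; the $\cR_1$ case then shows $L(\bS)^{\Phi_T}$ is $H$--equivalent to some $L(\K_{17})$, and applying $\Phi_T$ once more --- which permutes the $H$--orbits, as $\Phi_T$ normalizes $H(\cS_{2,2}(q^t))$ --- shows $L(\bS)$ is $H$--equivalent to $L(\K_{17})^{\Phi_T}=L(\K_{19})$, whence $\bS$ is isotopic to a $\K_{19}$.

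So assume $r,r'\in\cR_1$. The factor of $H(\cS_{2,2}(q^t))\simeq P\Gamma L(2,q^t)\times P\Gamma L(2,q^t)$ acting on $\cR_1\cong PG(1,q^t)$ does so as $P\Gamma L(2,q^t)$, hence $2$--transitively, so up to $H$ I may take $r:x_2=x_3=0$ and $r':x_0=x_1=0$, as in Proposition~\ref{prop:knuthlinearset}. By Theorem~\ref{theorem:geomPseuAlegpseudo}, $L(\bS)=L_{\rho,f}$ for a semilinear collineation $\Phi_f:r\to r'$ with companion automorphism $\sigma$, $Fix(\sigma)=\F_q$. Writing points of $r$ and $r'$ as $\langle(x,y,0,0)\rangle_{q^t}$ and $\langle(0,0,u,v)\rangle_{q^t}$, the map $f$ is $(x,y)\mapsto(ax^\sigma+by^\sigma,\,cx^\sigma+dy^\sigma)$ with invertible coefficient matrix, and Theorem~\ref{thm:algebraicpseudoregulus} gives, after absorbing $\rho$,
\[
\C=\left\{\begin{pmatrix} x & y\\ ax^\sigma+by^\sigma & cx^\sigma+dy^\sigma\end{pmatrix}:\ x,y\in\F_{q^t}\right\}.
\]

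The crux is then to reduce this four--parameter family to the Knuth form using the stabilizer of the ordered pair $(r,r')$ in $H(\cS_{2,2}(q^t))$, which is generated by left multiplications by diagonal matrices, arbitrary right multiplications, and field automorphisms, all acting by $X\mapsto A\,X^\gamma\,B$. I would use the right--multiplication freedom both to reparametrize the free first row and to clear the $x^\sigma$--term from the $(2,1)$--entry, and the diagonal left multiplication to normalize to $1$ the coefficient of $x^\sigma$ in the $(2,2)$--entry, bringing $\C$ to
\[
\left\{\begin{pmatrix} x & y\\ fy^\sigma & x^\sigma+gy^\sigma\end{pmatrix}:\ x,y\in\F_{q^t}\right\}=\C_{17}.
\]
Finally, the spread--set requirement that every non--zero element of $\C$ be invertible translates, via $\det\begin{pmatrix} x & y\\ fy^\sigma & x^\sigma+gy^\sigma\end{pmatrix}=y^{1+\sigma}\big((x/y)^{1+\sigma}+g(x/y)-f\big)$, into the non--existence of a root of the polynomial in the definition of $\K_{17}$, so $\bS$ is isotopic to $\K_{17}$. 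The main obstacle I anticipate is precisely this normalization step: verifying that the stabilizer has enough room to annihilate the two unwanted coefficients simultaneously while keeping the coefficient matrix invertible. The accompanying check that the companion automorphism of the reduced map is still $\sigma$ (so that the family obtained is genuinely a Knuth family, not merely of pseudoregulus type) should be routine, since $\F_{q^t}$--linear left/right multiplications preserve it and a field automorphism only conjugates it within the abelian group $Gal(\F_{q^t}:\F_q)$.
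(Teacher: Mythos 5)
Your route is, in outline, the paper's route: use the $2$-transitivity of $H(\cS_{2,2}(q^t))$ on $\cR_1$ to take $r\colon x_2=x_3=0$ and $r'\colon x_0=x_1=0$; invoke Theorems \ref{theorem:geomPseuAlegpseudo} and \ref{thm:algebraicpseudoregulus} to write the spread set, after absorbing $\rho$, as
$$\C=\Big\{\begin{pmatrix} x & y\\ ax^\sigma+by^\sigma & cx^\sigma+dy^\sigma\end{pmatrix}\,:\, x,y\in\Ft\Big\};$$
then normalize by the stabilizer of $(r,r')$. Your preliminary remarks are correct, and two of them are even cleaner than the paper's treatment: disjointness of the transversal spaces does force $r,r'$ into a common regulus, and the $\cR_2$ case does transfer formally to the $\cR_1$ case through $\Phi_T$, which normalizes $H(\cS_{2,2}(q^t))$ and maps $L(\K_{17})$ to $L(\K_{19})$ by Remark \ref{rem:transpKnuth} (the paper instead says to repeat the argument). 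But you stop exactly at the crux: you never verify that the stabilizer of $(r,r')$ (diagonal left multiplications, arbitrary right multiplications, field automorphisms) has enough room to make $a=0$ and $c=1$; you explicitly defer this as ``the main obstacle I anticipate''. Since the whole theorem reduces to precisely that normalization, the proposal as written has a genuine gap.

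The gap is fillable, and it is instructive to compare the two ways. The paper does the normalization \emph{geometrically, before} introducing coordinates for $f$: the stabilizer $G_{\{r,r'\}}$ is transitive on the points $R$ of $r$ and, for fixed $R$, transitive on the points of $r'$ distinct from $R^\perp\cap r'$; moreover no pseudoregulus line joins $R$ to $R^\perp\cap r'$, since such a line would lie on $Q^+(3,q^t)$, while every pseudoregulus line meets $L(\bS)$ and $L(\bS)\cap Q^+(3,q^t)=\emptyset$. Hence one may assume the line $s\colon x_1=x_2=0$ belongs to the pseudoregulus, which forces $\Phi_f(\langle(1,0,0,0)\rangle_{q^t})=\langle(0,0,0,1)\rangle_{q^t}$, i.e.\ $a=0$ at once; then $c\neq 0$ again follows from $L(\bS)\cap Q^+(3,q^t)=\emptyset$, and the single rescaling $\langle(x_0,x_1,x_2,x_3)\rangle_{q^t}\mapsto\langle(x_0,x_1,x_2/c,x_3/c)\rangle_{q^t}$ finishes. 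Alternatively, your algebraic plan can be completed: writing the second row of the spread matrices as $(x^\sigma,y^\sigma)M$ with $M=\begin{pmatrix} a & c\\ b & d\end{pmatrix}$, the stabilizer acts by $M\mapsto \kappa\,(B^\sigma)^{-1}MB$ with $\kappa\in\Ft^*$ and $B\in GL(2,q^t)$; pick any vector $v$ such that $Mv$ is not proportional to $v^\sigma$ (possible, because $Mv\parallel v^\sigma$ for all $v$ would force $\sigma=\mathrm{id}$, against $Fix(\sigma)=\F_q$ and $t\geq 2$), and let $B$ have columns $v$ and $(Mv)^{\sigma^{-1}}$: then the new $(1,1)$-entry vanishes, $B$ is invertible by the choice of $v$, and a final choice of $\kappa$ yields $\begin{pmatrix} 0 & 1\\ f & g\end{pmatrix}$, i.e.\ the spread set $\C_{17}$. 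One of these two arguments must be supplied for your proof to be complete.
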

\begin{proof} Without loss of generality, we may assume that $\bS=(\F_{q^t}\times
\F_{q^t},+,\circ)$, with $\F_{q^t}\times \{0\}$ contained in its
left nucleus. This implies that
$$(u,v)\circ (x,y)=(u,v)M,$$ where $M=M_{x,y}\in \M$. So, the
spread set of matrices associated with $\bS$ is
$$\C=\Big\{M_{x,y}=\left(
                          \begin{array}{cc}
                            m_0(x,y) & m_1(x,y) \\
                            m_2(x,y) & m_3(x,y) \\
                          \end{array}
                        \right)
:\ x,y\in\F_{q^t}\Big\}$$ and
$$L_{\C}=L(\bS)=\{\langle(m_0(x,y),m_1(x,y),m_2(x,y),m_3(x,y))\rangle_{q^t}\colon
x,y\in \F_{q^t},\,(x,y)\ne(0,0)\},$$ where $m_i(x,y)$ are
$\F_q$--linear maps. Assume that the transversal lines $r$ and $r'$
of $L(\bS)$ are contained in $\cR_1$. Since the group
$H(\cS_{2,2})=G$ (see Remark \ref{rem:polarline}) acts
$2$--transitively on the lines of $\cR_1$, we can suppose that
$r=\{\langle(x_0,x_1,0,0)\rangle_{q^t}\colon x_0,x_1\in
\F_{q^t},\,(x,y)\ne(0,0)\}$ and
$r'=\{\langle(0,0,x_2,x_3)\rangle_{q^t}\colon x_2,x_3\in
\F_{q^t},\,(x,y)\ne(0,0)\}.$ Note that the stabilizer $G_{\{r,r'\}}$
in the group $G$ of the lines $r$ and $r'$ acts transitively on the
points of $r$. If $P$ is any point of $r$, then the stabilizer
$G_{\{r,r',P\}}$ of $P$ in $G_{\{r,r'\}}$ fixes the point
$P^{\perp}\cap r'$ and acts transitively on the remaining points of
$r'$. This means that we can suppose, without loss of generality,
that the line $s$ with equations $x_1=x_2=0$ belongs to the
pseudoregulus associated with $L(\bS)$. Let $R = r\cap s=\langle
(1,0,0,0)\rangle_{q^t}$ and $R'=r'\cap s
=\langle(0,0,0,1)\rangle_{q^t}$. By Theorems
\ref{thm:numbprojequipseudoregulus} and
\ref{thm:algebraicpseudoregulus} there exist a semilinear
collineation $\Phi:\ \langle(x,y,0,0)\rangle_{q^t}\in r\mapsto
\langle(0,0,h(x,y),g(x,y))\rangle_{q^t}\in r'$ having $\sigma\in
Aut(\F_{q^t})$, with $Fix\,\sigma=\F_q$, as companion automorphism,
and an element $\rho\in\F_{q^t}^*$ such that
$$L(\bS)=\{\langle(x,y,\rho h(x,y),\rho g(x,y))\rangle_{q^t}\colon
x,y\in \F_{q^t},\,(x,y)\ne(0,0)\}.$$ Since $\Phi$ is semilinear with
companion automorphism $\sigma$, we have that
$$h(x,y)=a_1x^\sigma+a_2y^\sigma\mbox{\quad and \quad}
g(x,y)=b_1x^\sigma+b_2y^\sigma,$$ where
$a_1,a_2,b_1,b_2\in\F_{q^t}$. Also, since the line $s$ belongs to
the pseudoregulus associated with $L(\bS)$, we have $\Phi(R)=R'$,
i.e. $h(1,0)=a_1=0$. So $L(\bS)=\{\langle(x,y,\alpha y^\sigma,\beta
x^\sigma+\gamma y^\sigma)\rangle_{q^t}\colon x,y\in
\F_{q^t},\,(x,y)\ne(0,0)\}$, where $\alpha=\rho a_2$, $\beta=\rho
b_1$, $\gamma=\rho b_2$. Note that $L(\bS)\cap Q^+(3,q^t)=\emptyset$
implies $\beta \neq 0$. The collineation $\omega$ of $\P$ defined as
$\omega:\langle(x_0,x_1,x_2,x_3)\rangle_{q^t}\mapsto
\langle(x_0,x_1,x_2/\beta,x_3/\beta)\rangle_{q^t}$ fixes the reguli
of $Q^+(3,q^t)$ and
$$L(\bS)^\omega=\{\langle(x,y,fy^\sigma,
x^\sigma+gy^\sigma)\rangle_{q^t}\colon x,y\in
\F_{q^t},\,(x,y)\ne(0,0)\},$$ where $f=\frac\alpha\beta$ and
$g=\frac\gamma\beta$, i.e., up to isotopy, $\bS$ is a $\K_{17}$
Knuth semifield. Finally, if the transversal lines of $L(\bS)$
belong to $\cR_2$, arguing as in the previous case, we get that
$\bS$ is isotopic to a $\K_{19}$ Knuth semifield.
\end{proof}
\bigskip

 \subsection*{Acknowledgement}
We thank the referees for their valuable comments; these have
increased the readability of the article. Also, we wish to thank G.
Donati and N. Durante for their helpful suggestions regarding Remark
\ref{rema:baersubline}.




\bigskip

\bigskip

\bigskip

\noindent
\begin{tabular}{lll}
G. Lunardon and R. Trombetti & & G. Marino and O. Polverino\\
Dip. di Matematica e Applicazioni\ \  & & Dip. di Matematica e Fisica  \\
Universit\`a di Napoli ``Federico II'' & & Seconda Universit\`a degli Studi di Napoli \\
80126 Napoli, Italy & & 81100 Caserta, Italy\\
{\em lunardon@unina.it}, {\em rtrombet@unina.it} & & {\em giuseppe.marino@unina2.it}, {\em olga.polverino@unina2.it}\\
\end{tabular}

\end{document}